\numberwithin{equation}{section}
\newtheorem{theorem}{Theorem}[section]
\newtheorem{lemma}[theorem]{Lemma}
\newtheorem{proposition}[theorem]{Proposition}
\newtheorem{corollary}[theorem]{Corollary}
\theoremstyle{definition}
\newtheorem{definition}[theorem]{Definition}
\newtheorem{example}[theorem]{Example}
\theoremstyle{remark}
\newtheorem{remark}[theorem]{\bf{Remark}}
\newcommand{\R}{{\mathbb{R}}}
\newcommand{\C}{{\mathbb{C}}}
\newcommand{\Z}{{\mathbb{Z}}}
\renewcommand{\H}{{\mathbb{H}}}
\renewcommand{\O}{{\mathbb{O}}}
\newcommand{\Gcal}{{\mathcal{G}}}
\newcommand{\Rcal}{{\mathcal{R}}}
\newcommand{\Scal}{{\mathcal{S}}}
\renewcommand{\ker}{{\rm{ker}}}
\newcommand{\im}{{\rm{Im}}}
\newcommand{\tens}{\otimes}
\newcommand{\id}{{\rm id}}
\newcommand{\bo}{{}^{(1)}}
\newcommand{\bt}{{}^{(2)}}
\newcommand{\bth}{{}^{(3)}}
\renewcommand{\o}{{}_{(1)}}
\renewcommand{\t}{{}_{(2)}}
\renewcommand{\th}{{}_{(3)}}
\newcommand{\extd}{{\rm d}}
\newcommand{\del}{{\partial}}
\newcommand{\eps}{\varepsilon}
\newcommand{\rcocross}{\blacktriangleright\!\!<}
\newcommand{\iso}{\cong}
\begin{document}

\title{Hopf quasigroups and the algebraic 7-sphere}
\keywords{ Hopf algebra, quantum group, parallelizable, sphere, octonion,  quasiHopf algebra, monoidal category, quaternion, cocycle, Moufang identity.}
\subjclass{Primary 81R50, 16W50, 16S36}

\author{J. Klim and S. Majid}
\address{Queen Mary, University of London\\
School of Mathematics, Mile End Rd, London E1 4NS, UK}
\date{Version 5: 23rd July 2009}

\begin{abstract}
We introduce the notions of Hopf quasigroup and Hopf coquasigroup $H$ generalising the classical notion of an inverse property quasigroup $\Gcal$ expressed respectively as a quasigroup algebra $k\Gcal$ and an algebraic quasigroup $k[\Gcal]$. We prove basic results as for Hopf algebras, such as anti(co)multiplicativity of the antipode  $S:H\to H$, that $S^2=\id$ if $H$ is commutative or cocommutative, and a theory of crossed (co)products. We also introduce the notion of a Moufang Hopf (co)quasigroup and show that the coordinate algebras $k[S^{2^n-1}]$ of the parallelizable spheres are algebraic quasigroups (commutative Hopf coquasigroups in our formulation) 
and Moufang. We make use of the description of composition algebras such as the octonions via a cochain $F$ introduced in \cite{Ma99}. We construct an example $k[S^7]\rtimes\Z_2^3$ of a Hopf coquasigroup which is noncommutative and non-trivially Moufang. We use Hopf coquasigroup methods to study differential geometry on $k[S^7]$ including a short  algebraic proof that $S^7$ is parallelizable. Looking at combinations of left and right invariant vector fields on $k[S^7]$ we provide a new description of the structure constants of the Lie algebra $g_2$ in terms of the structure constants $F$ of the octonions. In the concluding section we give a new description of the $q$-deformation quantum group $\C_q[S^3]$ regarded trivially as a Moufang Hopf coquasigroup (trivially since it is in fact a Hopf algebra) but now in terms of $F$ built up via the Cayley-Dickson process. 
\end{abstract}
\maketitle 

\section{Introduction}

It is a well-known fact that the only parallelizable spheres are $S^1,S^3,S^7$. The first two are groups and it is known that $S^7$ is something weaker (a Moufang loop or Moufang quasigroup). Just as many Lie groups have an entirely algebraic description as commutative Hopf algebras, the main goal of the present paper is to develop the corresponding theory of `algebraic quasigroups' including the coordinate algebra $k[S^7]$ of the $7$-sphere. Throughout the paper we work over a field $k$ of characteristic not 2 (unless stated otherwise). The definitions are, however, obviously more general and in particular our algebraic description also provides $\Z[S^7]$ as a Moufang algebraic quasigroup, for example. A further by-product of this algebraic formulation is that it  does not in fact require the `coordinate algebra' to be commutative i.e. provides the framework to quantise the notion of inverse property quasigroups and Moufang loops in the same way as Hopf algebras provided the framework for quantum group versions of the standard Lie groups. This is not actually our present goal but we will give an example which is noncommutative and not a Hopf algebra, i.e. genuinely both noncommutative and `quasi', and will touch upon $q$-deformed examples in the concluding remarks at the end of the paper.

An outline of the paper is as follows. Section 2 provides some preliminary background needed for all the examples in the paper, namely a way of working with the octonions as twisted group quasialgebras $k_FG$ defined by the group $G=\Z_2^n$ of $\Z_2$-valued vectors and a cochain $F$. The approach is due to the 2nd author and H. Albuquerque \cite{Ma99} to which we refer for further details. The modest new result in this section is that the requirement of a composition algebra completely determines the associator  $\phi$ and symmetry $\Rcal$ in terms of linear independence over $\Z_2$, irrespective of the form of $F$. In Section 3 we provide some preliminary background on classical inverse property quasigroups. We mean by this only a set with a product, not necessarily associative, an identity and two-sided inverses in a certain strong sense. Such objects are also called `loops with inverse property'. As we only consider the case with inverses we will simply refer to quasigroups for brevity and leave the adjective `inverse property' understood throughout the paper.  We show (Proposition~3.6) how the associated unit spheres $S^{2^n-1}$ can be seen to be such quasigroups in terms of the properties of $F$. We also recall the special case of Moufang loops and provide some elementary proofs and observations mainly as a warm-up to the Hopf case to follow. A modest new feature is to study quasigroups in terms of a multiplicative associator $\varphi$ inspired by the theory of quasialgebras.

Section 4 now proceeds to `linearise' the theory of quasigroups, which now appear as cocommutative examples $k\Gcal$ (the `quasigroup algebra') of our new notion of `Hopf quasigroups'. We show that much of the elementary quasigroup theory extends to this linear setting of a coalgebra $H$ equipped with a certain but not-necessarily associative linear product. The inversion operation appears as a linear map $S:H\to H$ and we prove some basic properties analogous to theorems\cite{Ma:book} for Hopf algebras. Probably the most important is that $S$ reverses the order of products and coproducts, see Proposition~4.2, and squares to the identity if $H$ is commutative or cocommutative, see Proposition~4.3.  The notion also  includes the example of an enveloping algebra $U(L)$ of a Mal'tsev algebra recently introduced in \cite{PS}, provided one supplements the `diagonal coproduct' there with an antipode and a counit defined by $Sx=-x$ and $\eps x=0$ for all $x\in L$ (Propositions~4.8 and~4.9).  Thus our axioms  unify quasigroups and Mal'tsev algebras just as Hopf algebras historically unified groups and Lie algebras.

We are then ready, in Section 5, to `dualise' these notions to a theory of `Hopf coquasigroups' adequate to contain commutative examples  $k[\Gcal]$ (`quasigroups coordinate algebras') such as $k[S^{2^n-1}]$ (Proposition~5.7). Here the algebra $A$ is associative and equipped with a `coproduct' $\Delta: A\to A\tens A$, a counit $\eps:A\to k$ and an antipode $S:A\to A$ now playing the role of quasigroup product, identity and inversion.  The nonassociativity of the product becomes now non-coassociativity of $\Delta$. Although significantly more complicated than usual Hopf algebra theory we show again that a general theory is possible and prove some basic results as for Hopf algebras. The theory is not limited to commutative algebras and Example~5.11 provides a noncommutative Moufang Hopf coquasigroup as a genuine example of the theory.

Section 6 continues to use Hopf algebra-type methods to develop the notion of covariant differential calculus on  Hopf coquasigroups similar to that for Hopf algebras \cite{Wo89}. We obtain a short algebraic proof of the paralellizability of $S^7$ and see how the Lie algebra $g_2$ appears in terms of corresponding left and right invariant vector fields. The theory constructs this Lie algebra in terms of the structure constants $F$ of the octonions. We also see exactly how the left-invariant vector fields alone fail to close due to the nontrivial associator $\phi$.

We conclude in Section 7 with some preliminary remarks about complex generators and `quantum' Moufang loops in the sense of $q$-deformed examples where the Hopf coquasigroup becomes noncommutative as a parameter $q$ differs from 1.  We have an obvious example $\C_q[S^3]$ as the usual $*$-quantum group version of $SU_2$ but we provide a new `Cayley-Dickson' type presentation of this. These remarks suggest  a possible $\C_q[S^7]$ or other quantization to be developed in a sequel.

\section{Preliminaries on $k_FG$ approach to composition algebras}

In \cite{Ma99} the authors constructed the division algebras and higher Cayley algebras 
as twisted group rings, as follows. Let $k$ be a field, $G$ a finite group and $F:G\times G\to k^*$ a 2-cochain. This means that $F(e,a)=F(a,e)=1$ for all $a\in G$, where $e$ is the group identity.  Let \[ \phi(a,b,c)={F(a,b)F(a+b,c)\over F(b,c)F(a,b+c)}\]
 be the 3-cocycle coboundary of $F$. Finally, define $k_FG$ to be a vector space with basis $\{e_a\ :\ a\in G\}$ with product
\[ e_ae_b=F(a,b)e_{a+b}\]
Since our underlying group $G$ is going to be Abelian we will write it additively as here. Then it is easy to see that $(e_ae_b)e_c=\phi(a,b,c)e_a(e_be_c)$, i.e. $k_FG$, while not associative, is {\em quasi-associative} in the sense that its nonassociativity is strictly controlled by a 3-cocycle. In categorical terms it lives in the symmetric monoidal category of $G$-graded spaces with associator defined by $\phi$ and symmetry defined by $\Rcal(a,b)={F(a,b)\over F(b,a)}$. The choice $G=\Z_2^3$ and  a certain $F$ gives the octonions in this form.

We do not need the exact form of $F$ but rather a theorem \cite{Ma99} that $k_F\Z_2^n$ is a composition algebra with respect to the Euclidean norm in basis $G$ iff
\begin{equation} F(a,b)^2=1,\ \forall a,b \label{square}\end{equation}
\begin{equation} F(a,a+c) F(b,b+c)+F(a,b+c)F(b,a+c)=0,\ \forall a\ne b,\ \forall c \label{quad} \end{equation}
As a consequnce, the cochain $F$ will also satisfy
\begin{equation} F(a,a+b)=-F(a,b)\, \forall a\ne 0,\ \forall b \label{canr}\end{equation}
\begin{equation} F(a+b,a)=-F(b,a)\, \forall a\ne 0,\ \forall b \label{canl}\end{equation}
\begin{equation} F(a,b)F(a,c)=-F(a+b,c)F(a+c,b)\, \forall b\ne c, \ \forall a \label{quad2}\end{equation}
\begin{equation} F(a,c)F(b,c)=-F(a,b+c)F(b,a+c)\, \forall a\ne b, \ \forall c \label{quad3}\end{equation}
These identities are obtained from (\ref{quad}), for example, setting $b=0$ gives (\ref{canr}). We also know and will use that 
\begin{equation}F(a,a)=-1,\quad\forall a\ne 0\label{Fxx}\end{equation}
 in our examples. This applies to all the division algebras  $\C,\H,\O$ given by $k=\R$ and such $F$ on $\Z_2^n$ for $n=1,2,3$  respectively. Note that the Octonions being division algebras have left and right cancellation and obey the three equivalent  Moufang loop identities which we will recall later.

\begin{lemma} The choice $G=\Z_2^n$ and any $F$ giving a composition algebra, $k_FG$, with
\[ \phi(a,b,c)=\begin{cases}-1 & a,b,c{\rm \ linearly\ independent\ as\ vectors\ over\ }\Z_2\\ 1 & {\rm otherwise}\end{cases}\]
\[ \Rcal(a,b)=\begin{cases}-1 & a,b,a+b\ne 0 \\ 1 & {\rm otherwise}\end{cases}=\begin{cases}-1 & a,b{\rm \ linearly\ independent\ as\ vectors\ over\ }\Z_2 \\ 1 & {\rm otherwise}\end{cases}\]
In particular, $\phi$ and $\Rcal$ are symmetric, and $\phi(a+b,b,c)=\phi(a,b,c)$ and $\Rcal(a+b,b)=\Rcal(a,b)$ for all $a,b\in G$.
\label{phi}
\end{lemma}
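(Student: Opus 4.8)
The plan is to exploit that, by \eqref{square}, $F$ takes values in $\{\pm1\}$, so both $\phi$ and $\Rcal$ may be rewritten as products rather than quotients, namely $\Rcal(a,b)=F(a,b)F(b,a)$ and $\phi(a,b,c)=F(a,b)F(a+b,c)F(b,c)F(a,b+c)$, and each is pinned down once we know its sign in each combinatorial regime. First I would record the elementary translation that a family of vectors in $\Z_2^n$ is linearly dependent precisely when some nonempty subset sums to $0$: for two vectors this reads $a=0$, $b=0$ or $a=b$, and for three it reads that one of $a,b,c$ is $0$, two of them coincide, or $a+b+c=0$. This makes the two stated descriptions of $\Rcal$ literally the same statement and reduces the lemma to a sign computation in each regime.

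For $\Rcal$ the computation is short. In the degenerate regimes $\Rcal(a,b)=F(a,b)F(b,a)=1$ is immediate from the normalisation $F(0,\cdot)=F(\cdot,0)=1$ (when $a$ or $b$ is $0$) or from $F(a,a)^2=1$ (when $a=b$). In the independent regime $a,b,a+b\ne0$ I would feed $c=0$ into \eqref{quad}, which is legal as $a\ne b$, and use \eqref{Fxx} to get $F(a,a)F(b,b)=1$; then \eqref{quad} collapses to $1+F(a,b)F(b,a)=0$, i.e.\ $\Rcal(a,b)=-1$.

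The substance is $\phi$. I would first dispose of the cases where some argument is $0$, where the normalisation of $F$ makes two of the four factors cancel and $\phi=1$ outright. Assuming then $a,b,c\ne0$, the remaining degenerate regimes ($a=b$, $b=c$, $a=c$, $a+b+c=0$) each reduce to $\phi=1$ by a two-line simplification using \eqref{canr}, \eqref{canl} and \eqref{Fxx}; the case $a+b+c=0$ is the only mildly interesting one, as it produces $\phi=-\Rcal(a,b)$ and then invokes the $\Rcal$ result just proved. For the main regime, $a,b,c$ linearly independent, the idea is to telescope $\phi$ down to an $\Rcal$ value: apply \eqref{quad3} (valid as $a\ne b$, $a,b\ne0$) to the pair $F(b,c)F(a,b+c)$, cancel a squared factor via \eqref{square}, then apply \eqref{quad2} (valid as $b\ne c$, $b,c\ne0$) to the resulting $F(a,b)F(a,c)$, after which a further squared factor cancels and one is left with $\phi(a,b,c)=F(a+c,b)F(b,a+c)=\Rcal(a+c,b)$. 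Since independence gives $a+c\ne0$, $b\ne0$ and $(a+c)+b=a+b+c\ne0$, the $\Rcal$ formula yields $\phi=-1$.

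Finally the ``in particular'' clauses come for free from the characterisations rather than from fresh computation: linear independence of a set is unchanged under permuting its members and under the invertible $\Z_2$-replacement of $a$ by $a+b$, so $\phi$ and $\Rcal$ are symmetric and satisfy $\phi(a+b,b,c)=\phi(a,b,c)$ and $\Rcal(a+b,b)=\Rcal(a,b)$. The part I expect to be delicate is the bookkeeping in the main regime: the consequence identities \eqref{quad2} and \eqref{quad3} are only valid away from the degenerate loci — they secretly require the shifted arguments to be nonzero, as one sees by re-deriving them from \eqref{quad} through \eqref{canr}/\eqref{canl} — so at each application I must check that independence supplies exactly the nonvanishing and distinctness hypotheses needed, and keep careful track of which squared factors are being cancelled.
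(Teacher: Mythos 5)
Your proposal is correct and follows essentially the same route as the paper's proof: the same case decomposition, the same use of \eqref{quad} at $c=0$ together with \eqref{Fxx} for $\Rcal$, the identities \eqref{canr}, \eqref{canl}, \eqref{Fxx} for the degenerate $\phi$ cases, and the same telescoping via \eqref{quad2} and \eqref{quad3} down to $\phi(a,b,c)=\Rcal(a+c,b)=-1$ in the linearly independent case. Your two refinements — obtaining the symmetry and shift-invariance statements directly from the established characterisations (where the paper instead recomputes symmetry of $\phi$ explicitly), and noting that \eqref{quad2}, \eqref{quad3} implicitly require the relevant arguments to be nonzero — are both sound and slightly tidier than the printed argument.
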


\proof We start with the symmetry $\Rcal$. If $a=b=0$,

\[ \Rcal(a,b)=\Rcal(0,0)=F(0,0)F(0,0)=1, \]

and if $a=b$ then

\[ \Rcal(a,b)=\Rcal(a,a)=F(a,a)F(a,a)=1. \]

For the case $a\ne b$, one can use the composition identity, (\ref{quad}), by setting $x=a,\ y=b, \ z=0$ to obtain

\[ F(a,a)F(b,b)=-F(a,b)F(b,a)=-\Rcal(a,b)\]

If $a=0, b\ne 0$ then $F(a,a)=1,\ F(b,b)=-1$, hence $\Rcal(a,b)=1$, similarly, if $b=0,a\ne 0$, then $\Rcal(a,b)=1$. Finally, if $a,b\ne 0$, $F(a,a)=F(b,b)=-1$, hence $\Rcal(a,b)=-1$. This establishes the stated form of $\Rcal$.

For $\phi$ we again consider the cases. Suppose $a,b,c$ are linearly dependent, say $a=b+c$ with $a,b,c\ne 0$, then,
\[\phi(a,b,c)=\phi(b+c,b,c)=F(b+c,b)F(c,c)F(b,c)F(b+c,b+c)=F(b+c,b)F(b,c),\]
since $F(c,c)=F(b+c,b+c)=-1$. Using identity (\ref{canl}), set $x=b\ne 0,\ y=c$ to obtain $F(b+c,b)=-F(c,b)$. Thus,
\[\phi(b+c,b,c)=F(b+c,b)F(b,c)=-F(c,b)F(b,c)=-\Rcal(b,c)=1\]
Now suppose, $a=b\ne 0$, then,
\[ \phi(a,b,c)=\phi(a,a,c)=F(a,a)F(0,c)F(a,c)F(a,a+c)=-F(a,c)F(a,a+c)\]
Using (\ref{canr}) and (\ref{square}), we get
\[\phi(a,a,c)=F(a,c)F(a,c)=1\]
Similarly if $a=c\ne 0$ or $b=c\ne 0$. Next, suppose $a=0$, then using (\ref{square})
\[\phi(a,b,c)=\phi(0,b,c)=F(0,b)F(b,c)F(b,c)F(0,b+c)=F(b,c)F(b,c)=1.\]
Finally, suppose $a,b,c\ne 0$ are linearly independent, then
\[\phi(a,b,c)=F(a,b)F(a+b,c)F(b,c)F(a,b+c).\]
Using identities (\ref{quad2}) and (\ref{quad3}), we obtain
\[ F(a,b)F(a+b,c)=-F(a,c)F(a+c,b),\]
\[ F(b,c)F(a,b+c)=-F(a,c)F(b,a+c),\]
hence,
\[\phi(a,b,c)=F(a,c)F(a+c,b)F(a,c)F(b,a+c)=\Rcal(a+c,b)=-1.\]
This establishes the stated form of $\phi$. 

Finally, we show that $\phi$ is symmetric. Assume $a,b,c\ne 0$ are linearly independent in $\Z_2^n$. Using the properties of $\Rcal$, that $G$ is abelian, and identity (\ref{quad3}),
\begin{eqnarray*}
\phi(a,b,c)	&=	&F(a,b)F(a+b,c)F(b,c)F(a,b+c)\\
						&=	&-F(b,a)F(b+a,c)F(b,c)F(a,b+c)\\
						&=	&F(b,a)F(b+a,c)F(a,c)F(b,a+c)\\
						&=	&\phi(b,a,c)
\end{eqnarray*}
\begin{eqnarray*}
\phi(a,b,c)	&=	&F(a,b)F(a+b,c)F(b,c)F(a,b+c)\\
						&=	&F(b,a)F(c,b+a)F(c,b)F(c+b,a)\\
						&=	&\phi(c,b,a)
\end{eqnarray*}

The final conclusions follow immediately as the linear independence of $a+b,b,c$ depends on the linear independence of $a,b,c$.
\endproof

The statement that $k_FG$ is a composition algebra means that the norm

\[ q(\sum_a u_ae_a)= \sum_a u_a^2\]

is multiplicative. In particular, it means that the set of elements of unit norm, i.e. the $2^n-1$-spheres over $k$, is closed under multiplication in $k_FG$. We denote this set by 

\[ \Scal^{2^n-1}=\{\sum_a u_a e_a|\quad \sum_a u_a^2 =1\}\subset k_F\Z_2^n\]

which becomes a usual sphere if we work over $\R$. The next lemma makes it clear that we have 2-sided inverses by exhibiting them.

\begin{lemma} If $u\in k_FG$ has unit norm then 
\[ u^{-1}=u_0 e_0 - \sum_{a\ne 0} u_a e_a=2u_0-u\]
\end{lemma}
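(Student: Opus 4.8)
The plan is to verify directly that the proposed element, which I will write as $\bar u = u_0 e_0 - \sum_{a\ne 0} u_a e_a$, is a two-sided inverse of $u$, by expanding both products in the basis $\{e_a\}$ and reading off the coefficient of each $e_c$. (The second form $\bar u = 2u_0 e_0 - u$ is immediate, so there is nothing to prove there.) Writing $\bar u = \sum_b \bar u_b e_b$ with $\bar u_0 = u_0$ and $\bar u_b = -u_b$ for $b\ne 0$, the rule $e_a e_b = F(a,b) e_{a+b}$ gives the coefficient of $e_c$ in $u\bar u$ as $\sum_{a+b=c} u_a\,\bar u_b\,F(a,b)$. I would prove $u\bar u = e_0$ by showing this coefficient equals $1$ when $c=0$ and $0$ otherwise, and then run the mirror computation for $\bar u u$.

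For the diagonal coefficient $c=0$ I would use that $a+b=0$ forces $b=a$ in $\Z_2^n$, so the coefficient reduces to $\sum_a u_a\,\bar u_a\,F(a,a)$. The $a=0$ term contributes $u_0^2$ since $F(0,0)=1$, while each $a\ne 0$ term contributes $u_a(-u_a)F(a,a)=u_a^2$ by (\ref{Fxx}); summing yields $\sum_a u_a^2 = 1$ by the unit-norm hypothesis, as required.

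For $c\ne 0$ the constraint $a+b=c$ forces $a\ne b$, and I would pair the ordered contributions $(a,b)$ and $(b,a)$. When one of $a,b$ vanishes (the pair $a=0,b=c$ against $a=c,b=0$), the two terms are $-u_0 u_c$ and $+u_0 u_c$ using $F(0,c)=F(c,0)=1$, so they cancel. When $a,b\ne 0$ the pair contributes $-u_a u_b\big(F(a,b)+F(b,a)\big)$, and here the decisive input is the antisymmetry $F(a,b)=-F(b,a)$: since $a,b\ne 0$ and $a+b=c\ne 0$ the vectors $a,b$ are linearly independent over $\Z_2$, so $\Rcal(a,b)=-1$ by Lemma~\ref{phi}, giving $F(a,b)+F(b,a)=0$. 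Hence every coefficient with $c\ne 0$ vanishes and $u\bar u = e_0$.

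The main obstacle is simply recognising that the entire off-diagonal cancellation rests on this antisymmetry, which is precisely the statement $\Rcal(a,b)=-1$ for linearly independent arguments established in the previous lemma; once that is in hand the rest is bookkeeping over the cases. The computation for $\bar u u$ is identical after transferring the conjugation sign onto the first index, so $\bar u u = e_0$ as well, and therefore $\bar u = u^{-1}$ is a genuine two-sided inverse.
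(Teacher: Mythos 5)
Your proof is correct and follows essentially the same route as the paper's: expand the product in the basis, observe that the diagonal terms sum to $q(u)=1$ using $F(a,a)=-1$, and cancel the off-diagonal terms in pairs via the antisymmetry $F(a,b)=-F(b,a)$, valid precisely when $a,b,a+b\ne 0$. The only cosmetic differences are that you organise the computation coefficient-by-coefficient and cite Lemma~2.1 explicitly for the antisymmetry, where the paper simply asserts it under the same hypotheses.
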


\proof 
\[ (u_0e_0-\sum_{a\ne 0}u_ae_a)(u_0e_0+\sum_{b\ne 0}u_be_b)=\sum u_a^2e_0+\sum_{a,b,a+b\ne 0}u_au_bF(a,b)e_{a+b}=q(u)\]
since under the assumptions of the last summation we know that $F(a,b)=-F(b,a)$. Similarly on the other side. \endproof

We also have $S^{2^n-1}\subset k_F^\times G$, the set of elements of nonzero norm. These are also closed under the product of $k_FG$ and from the lemma we deduce that they are invertible with $u^{-1}=(2u_0-u)/q(u)$. Over  $\R$ this larger object is the set of the invertible elements of $k_FG$.  Similarly we define a finite object $\Gcal_n\subset S^{2^n-1}$ by
\[ \Gcal_n=\{\pm e_a\ |\ a\in \Z_2^n\}\subset k_F\Z_2^n\]
The elements here all have unit norm and since $F(a,b)=\pm 1$ we see that $\Gcal_n$ is closed under multiplication and has identity $1=e_0$. The inverses are $e_i^{-1}=-e_i$ for $i\ne 0$.  In the case $n=3$ one has $\Gcal_n=\Gcal_\O$ the order 16 Moufang loop associated to the octonions. We explain the terminology and background next.

\section{Quasigroups}

In this section we provide some elementary background on loops and quasigroups, mainly as a warm-up for the next section. The term `loop' refers to a set $\Gcal$ with a product such that the operations $L_u$ and $R_u$ of left and right multiplication by $u$ are bijective for all $u\in \Gcal$. In addition we require an identity element $e\in \Gcal$ for the product. We will be interested only in loops with two-sided inverse property or `IP loops'. In the literature the term `quasigroup' is also used to denote a set with $L_u,R_u$ bijective, so our objects could also be called `quasigroups with 2-sided inverse property', but we will just refer to quasigroups for short. 

\begin{definition} We define (an inverse property) {\em quasigroup} (or `IP loop') as a set $\Gcal$ with a product, identity $e$ and with the property that for each $u\in \Gcal$ there is $u^{-1}\in\Gcal$ such that 
\[ u^{-1}(uv)=v,\quad (vu)u^{-1}=v,\quad \forall v\in \Gcal.\]
A quasigroup is {\em flexible} if $u(vu)=(uv)u$ for all $u,v\in \Gcal$ and {\em alternative} if also $u(uv)=(uu)v$, $u(vv)=(uv)v$ for all $u,v\in \Gcal$. It is called {\em Moufang} if $u(v(uw))=((uv)u)w$ for all $u,v,w\in\Gcal$. \end{definition}

The special cases are in line with usual terminology for flexible and alternative algebras, except that in algebras over characteristic not 2, any two of the three alternative conditions imply the third.

It is easy to see (and well-known) that in any quasigroup $\Gcal$ one has unique inverses and
\begin{equation} (u^{-1})^{-1}=u, \quad (uv)^{-1}=v^{-1}u^{-1},\quad \forall u,v\in \Gcal.\end{equation}
As we will generalise this in the next section, we give the proof here. We have $(u^{-1}(uv))(uv)^{-1}=u^{-1}$, but the left hand side also
simplifies by the quasigroup identities, so that $v(uv)^{-1}=u^{-1}$. Now apply $v^{-1}$ to the left of both sides. 

Moreover, note also that a Moufang loop necessarily has two-sided inverses and is therefore a Moufang quasigroup, and that the following well-known lemma holds

\begin{lemma} Let $\Gcal$ be a quasigroup, then the following identities are equivalent, for all $u,v,w\in\Gcal$
\begin{enumerate}
\item $u(v(uw))=((uv)u)w$ 
\item $((uv)w)v=u(v(wv))$ 
\item $(uv)(wu)=(u(vw))u$ 
\end{enumerate}
\end{lemma}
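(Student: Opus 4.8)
The plan is to establish the two equivalences (1)$\Leftrightarrow$(2) and (1)$\Leftrightarrow$(3); together with transitivity these give the full three-way equivalence. Throughout I would use only the quasigroup axioms of the Definition together with the facts already proved above, namely the inverse-property cancellations $u^{-1}(uv)=v$ and $(vu)u^{-1}=v$, uniqueness of inverses, and the anti-automorphism property $(uv)^{-1}=v^{-1}u^{-1}$. As a preliminary observation I would record that each of (1), (2), (3) forces the \emph{flexible law} $u(vu)=(uv)u$: this drops out on setting $w=e$ in (1), $u=e$ in (2), and $v=e$ in (3) respectively, and it will be needed repeatedly to regroup nested products.

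The equivalence (1)$\Leftrightarrow$(2) I expect to be the easy half, handled entirely by the inverse anti-automorphism. Substituting $(u,v,w)\mapsto(w^{-1},v^{-1},u^{-1})$ into (1) gives $w^{-1}(v^{-1}(w^{-1}u^{-1}))=((w^{-1}v^{-1})w^{-1})u^{-1}$. Applying $(\ )^{-1}$ to both sides and repeatedly using $(ab)^{-1}=b^{-1}a^{-1}$ together with $(a^{-1})^{-1}=a$ turns the left-hand side into $((uw)v)w$ and the right-hand side into $u(w(vw))$; after the harmless relabelling $v\leftrightarrow w$ this is exactly (2). Since this substitute-and-invert operation is an involution, running the same computation backwards gives (2)$\Rightarrow$(1).

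The genuine work lies in linking the one-sided identities to the \emph{central} identity (3). I would first note that inversion alone cannot achieve this: the same recipe sends (3) to $(uv)(wu)=u((vw)u)$, which is just (3) again up to flexibility, so (3) is self-dual and must be reached by a different route. My plan is to pass through the alternative laws. From (1) one extracts the left alternative law $u(uw)=(uu)w$ (set $v=e$), and dually the right alternative law; feeding inverse elements into the identities and cancelling yields companion relations of the form $(uv)u^{-1}=u(vu^{-1})$. The bridge to (3) I would then build from the substitution $w\mapsto u^{-1}(wu)$ in (1), which by the inverse property collapses the inner $u(u^{-1}(wu))$ to $wu$ and produces $u(v(wu))=((uv)u)(u^{-1}(wu))$; combining this with flexibility (to trade $(u(vw))u$ for $u((vw)u)$) and with the alternative laws (to absorb the spurious factor $u^{-1}$) is designed to collapse both sides onto the two members of (3), with the reverse direction (3)$\Rightarrow$(1) run using the dual relations extracted from (3).

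The step I expect to be the main obstacle is precisely this last bridge. Because (1) and (2) are \emph{one-sided} — the repeated variable $u$ sits in the first and third slots — while (3) is \emph{two-sided}, with $u$ in the outermost left and right positions, no single relabelling converts one form into the other. The substitution must be chosen so that the inverse-property cancellations remove exactly the auxiliary factors introduced, and the flexible and alternative laws must be invoked in the correct order so that the nested products regroup without ever assuming associativity. Once the two links (1)$\Leftrightarrow$(2) and (1)$\Leftrightarrow$(3) are in place the equivalence of all three statements follows, recovering the classical loop-theoretic result within the present inverse-property quasigroup setting.
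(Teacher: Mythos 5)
Your treatment of (1)$\Leftrightarrow$(2) is correct and is exactly the paper's argument: take inverses of both sides of (1) using $(uv)^{-1}=v^{-1}u^{-1}$ and relabel; the paper does precisely this, and your remark that this operation fixes (3), so that the central identity needs a different route, is a sound observation. Your opening move for (1)$\Rightarrow$(3) is also legitimate: substituting $w\mapsto u^{-1}(wu)$ into (1) and cancelling via the inverse property (with $(u^{-1})^{-1}=u$) does give $u(v(wu))=((uv)u)(u^{-1}(wu))$, which, upon setting $y=wu$, is exactly the paper's intermediate identity $u(vy)=((uv)u)(u^{-1}y)$ obtained there from the substitution $w\mapsto u^{-1}w$.

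The genuine gap is in how you propose to finish. ``Flexibility to trade $(u(vw))u$ for $u((vw)u)$, and the alternative laws to absorb the spurious factor $u^{-1}$'' is not a valid step: the offending factor is the nested product $u^{-1}(wu)$ sitting as the \emph{right} argument of $((uv)u)(\cdot)$, and none of the flexible law, the left/right alternative laws, or the companion identity $(uv)u^{-1}=u(vu^{-1})$ applies to that configuration, since the repeated variable is never adjacent in the required pattern. Moreover these laws are strictly weaker consequences of (1), so invoking them cannot substitute for reusing the full strength of the intermediate identity. What actually closes the gap --- the paper's manoeuvre, and the idea missing from your sketch --- is a \emph{right division followed by a second substitution}: from $u(vw)=((uv)u)(u^{-1}w)$, multiply on the right by $(u^{-1}w)^{-1}=w^{-1}u$ and cancel by the inverse property to get $(uv)u=(u(vw))(w^{-1}u)$; then substitute $v\mapsto vw$ and $w\mapsto w^{-1}$, so that $(vw)w^{-1}$ collapses to $v$ and $w^{-1}u$ becomes $wu$, yielding $(u(vw))u=(uv)(wu)$, which is (3). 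The converse (3)$\Rightarrow$(1) needs its own chain of the same kind (right-divide (3) by $(wu)^{-1}=u^{-1}w^{-1}$, substitute $v\mapsto vw$, $w\mapsto w^{-1}$ to get $u(vw)=((uv)u)(u^{-1}w)$, then $w\mapsto uw$); your phrase ``run using the dual relations extracted from (3)'' supplies no argument here, so that direction is essentially unaddressed. In short: your setup is right and matches the paper up to a change of variables, but the cancellation mechanism you name would fail, and the two directions of the (1)$\Leftrightarrow$(3) bridge are left incomplete.
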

\proof We provide an elementary proof that will serve as a template for the Hopf case in the next section. 
Suppose (1) holds. Taking the inverse of both sides gives
\[ ((w^{-1}u^{-1})v^{-1})u^{-1}=w^{-1}(u^{-1}(v^{-1}u^{-1})) \]
for all $u,v,w\in \Gcal$, which is equivalent to (2). Similarly (2) implies (1).

Now, suppose (1) holds, and replace $w$ by $u^{-1}w$ to get $u(vw)=((uv)u)(u^{-1}w)$, therefore we obtain
\[ (uv)u=(u(vw))(u^{-1}w)^{-1}=(u(vw))(w^{-1}u) \]
Now replace $v$ by $vw$ and $w$ by $w^{-1}$ to obtain
\[ (u(vw))u=(uv)(wu) \]
for all $u,v,w\in\Gcal$, which is identity (3).

Finally, assume (3) holds, then
\[ uv=((u(vw))u)(wu)^{-1}=((u(vw))u)(u^{-1}w^{-1}) \]
Replacing $v$ by $vw$ and $w$ by $w^{-1}$ gives $u(vw)=((uv)u)(u^{-1}w)$. Finally, replacing $w$ by $uw$ gives
\[ u(v(uw))=((uv)u)w \]
which is identity (1).
\endproof

Hence Moufang implies alternative by looking at special cases the three versions. These are usual notions and results except that we have changed the emphasis by starting with 2-sided inverses, as our focus is on group theory.   

\begin{lemma}
Let $\Gcal$ be a flexible quasigroup, then
\[ u(vu^{-1})=(uv)u^{-1} \quad \forall u,v\in \Gcal\]
\end{lemma}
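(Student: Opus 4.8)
The plan is to obtain the identity directly from the flexibility axiom \(u(vu)=(uv)u\) by a single substitution followed by cancellation using the inverse property, mirroring the style of the preceding lemma's proof. No induction or case analysis should be needed, and the whole argument should be a couple of lines.

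First I would record the variant of the inverse property that the argument needs. The defining quasigroup identities give \((vu)u^{-1}=v\); replacing \(u\) by \(u^{-1}\) throughout and using \((u^{-1})^{-1}=u\) yields \((vu^{-1})u=v\) for all \(u,v\in\Gcal\). This is the only preliminary observation required.

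Next, in the flexibility identity I would substitute \(v\mapsto vu^{-1}\), obtaining \(u((vu^{-1})u)=(u(vu^{-1}))u\). The left-hand side collapses via the variant just noted, since \((vu^{-1})u=v\), leaving \(uv=(u(vu^{-1}))u\). Finally I would apply right multiplication by \(u^{-1}\) to both sides and simplify the right-hand side by the inverse property \((wu)u^{-1}=w\) with \(w=u(vu^{-1})\); this produces \((uv)u^{-1}=u(vu^{-1})\), which is exactly the claim.

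I do not expect a genuine obstacle here: the content is a short manipulation. The only point requiring care is the bookkeeping with the two one-sided inverse identities — specifically deriving \((vu^{-1})u=v\) in the correct direction, and making sure the right-multiplication cancellation invokes the identity \((wu)u^{-1}=w\) rather than the left-hand one. These are precisely the kinds of cancellation steps that will have to be recast diagrammatically (with the coproduct and antipode) in the Hopf coquasigroup setting of the later sections, so isolating them cleanly here is worthwhile.
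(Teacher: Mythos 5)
Your proposal is correct and follows essentially the same route as the paper: both apply flexibility with $v$ replaced by $vu^{-1}$, collapse $(vu^{-1})u=v$ via the inverse property, and then cancel $u$ on the right. The only cosmetic difference is that the paper compares the two expressions $(u(vu^{-1}))u=((uv)u^{-1})u$ and cancels the common factor, whereas you make that cancellation explicit by right-multiplying by $u^{-1}$ and invoking $(wu)u^{-1}=w$ --- the same mechanism either way.
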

\proof
By flexibility and quasigroup properties, we have
\[ (u(vu^{-1}))u=u((vu^{-1})u)=uv \]
But also,
\[ ((uv)u^{-1})u=uv\]
Therefore
\[ (u(vu^{-1}))u=((uv)u^{-1})u \]
for all $u,v\in \Gcal$, hence $u(vu^{-1})=(uv)u^{-1}$ for all $u,v\in \Gcal$.
\endproof

Also for any quasigroup  $\Gcal$ we now introduce the `multiplicative associator'  $\varphi:\Gcal^3\to \Gcal$ by 
\[ (uv)w=\varphi(u,v,w)(u(vw)),\quad \forall u,v,w\in\Gcal\]
and in view of the above, we can also obtain it explicitly from
\[ \varphi(u,v,w)=((uv)w)(u(vw))^{-1}=((uv)w)((w^{-1}v^{-1})u^{-1}),\quad \forall u,v,w\in\Gcal.\]
We define the {\em group of associative elements or `nucleus'} $N(\Gcal)$ by
\[ N(\Gcal)=\{a\in\Gcal\ |\ (au)v=a(uv),\ u(av)=(ua)v,\ (uv)a=u(va),\quad \forall u,v\in\Gcal\}.\]
It is easy to see that this is indeed closed under the product and inverse operations and hence a group. We say that a quasigroup is {\em quasiassociative} if $\varphi$ and all its conjugates $u\varphi u^{-1}$ have their image in $N(\Gcal)$, and {\em central} if the image of $\varphi$ is in the centre $Z(\Gcal)$.

\begin{proposition}
Let $\Gcal$ be a  quasigroup with identity $e$ and multiplicative associator $\varphi$. Then
\begin{enumerate}
\item $N(\Gcal)=\{a\in \Gcal\ |\ \varphi(a,u,v)=\varphi(u,a,v)=\varphi(u,v,a)=e,\  \forall u,v\in \Gcal\}$.
\item  $\varphi(e,u,v)=\varphi(u,e,v)=\varphi(u,v,e)=\varphi(u,u^{-1},v)=\varphi(u,v,v^{-1})=\varphi(uv,v^{-1},u^{-1})=\varphi(u^{-1},uv,v^{-1})=\varphi(v^{-1},u^{-1},uv)=e$ $\forall u,v\in\Gcal$.
\item $\varphi(au,v,w)=a\varphi(u,v,w)a^{-1}$, $\varphi(ua,v,w)=\varphi(u,av,w)$, $\varphi(u,va,w)=\varphi(u,v,aw)$, $\varphi(u,v,wa)=\varphi(u,v,w)$ $\forall u,v,w\in\Gcal$ and $a\in N(\Gcal)$. 
\item  If  $\Gcal$ is quasiassociative then $\varphi$ is an `adjoint 3-cocycle' in the sense 
\[ \varphi(u,v,w)\varphi(u,vw,z)(u\varphi(v,w,z)u^{-1})=\varphi(uv,w,z)\varphi(u,v,wz),\quad\forall u,v,w,z\in\Gcal.\]
\item  $\Gcal$ is flexible {\em iff} $\varphi(u,v,u)=e$ for all $u,v\in\Gcal$ and alternative if also $\varphi(u,u,v)=\varphi(u,v,v)=e$ for all $u,v\in \Gcal$.
\end{enumerate}
\end{proposition}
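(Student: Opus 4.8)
The plan is to treat parts (1), (2) and (5) as essentially immediate translations of the defining relation $(uv)w=\varphi(u,v,w)(u(vw))$, reserving the real work for (3) and (4). For part (1) I would read directly off this relation that $\varphi(a,u,v)=e$ is equivalent to $(au)v=a(uv)$, that $\varphi(u,a,v)=e$ is equivalent to $u(av)=(ua)v$, and that $\varphi(u,v,a)=e$ is equivalent to $(uv)a=u(va)$; these three conditions are exactly the ones defining $N(\Gcal)$, so the two sets coincide. Part (5) is identical in spirit: $\varphi(u,v,u)=e$ says precisely $(uv)u=u(vu)$, i.e. flexibility, while $\varphi(u,u,v)=e$ and $\varphi(u,v,v)=e$ are the two alternative laws.

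For part (2) I would verify each equality by substituting into $\varphi(a,b,c)=((ab)c)(a(bc))^{-1}$ and simplifying with the quasigroup identities, in particular $uu^{-1}=u^{-1}u=e$ (which follows by cancellation from the bijectivity of $L_u,R_u$) together with $u(u^{-1}v)=v$ and $(vu)u^{-1}=v$ and the already-proved $(uv)^{-1}=v^{-1}u^{-1}$. For instance $\varphi(u,u^{-1},v)=e$ reduces to comparing $(uu^{-1})v=v$ with $u(u^{-1}v)=v$, and $\varphi(uv,v^{-1},u^{-1})=e$ to comparing $((uv)v^{-1})u^{-1}=uu^{-1}=e$ with $(uv)(v^{-1}u^{-1})=(uv)(uv)^{-1}=e$; the other listed cases are handled the same way.

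Part (3) is the first genuinely computational step, but it needs only $a,a^{-1}\in N(\Gcal)$ and no quasiassociativity. I would compute $\varphi(au,v,w)$ by pushing $a$ through each bracketing: since $a\in N(\Gcal)$ one gets $((au)v)w=a((uv)w)=a(\varphi(u,v,w)(u(vw)))$, whereas the defining relation gives $((au)v)w=\varphi(au,v,w)(a(u(vw)))$. Writing $p=\varphi(u,v,w)$ and $q=u(vw)$, the identification $\varphi(au,v,w)=apa^{-1}$ follows by checking $(apa^{-1})(aq)=a(pq)$, which is a pure nucleus manipulation using $(Xa^{-1})(aq)=X(a^{-1}(aq))=Xq$ and then $(ap)q=a(pq)$, followed by cancellation of $aq$ via bijectivity of $R_{aq}$. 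The remaining three identities of (3) are obtained the same way, moving $a$ across one bracket at a time.

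The main obstacle, and the heart of the proposition, is part (4), where quasiassociativity enters essentially. Here I would run a pentagon argument, evaluating $((uv)w)z$ along two reassociation routes to $u(v(wz))$. One route, applying the definition twice via $(uv)w$ replaced and then $(uv)(wz)$ replaced, gives $((uv)w)z=(\varphi(uv,w,z)\varphi(u,v,wz))(u(v(wz)))$. The other route first rewrites $(uv)w=\varphi(u,v,w)(u(vw))$, then $(u(vw))z=\varphi(u,vw,z)(u((vw)z))$, and finally uses $(vw)z=\varphi(v,w,z)(v(wz))$; the delicate point is extracting $\varphi(v,w,z)$ through the outer factor $u$, which forces the conjugate $u\varphi(v,w,z)u^{-1}$ to appear via $u(cy)=(uc)y=(ucu^{-1})(uy)$ for $c=\varphi(v,w,z)$. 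This is exactly where both halves of the quasiassociativity hypothesis are used: that the images of $\varphi$ lie in $N(\Gcal)$ permits freely regrouping the accumulating coefficients into a single nucleus element, and that the conjugates $u\varphi u^{-1}$ also lie in $N(\Gcal)$ legitimises the regrouping step just described. Equating the two expressions, each now of the form $n\cdot(u(v(wz)))$ with $n\in N(\Gcal)$, and cancelling by bijectivity of $R_{u(v(wz))}$, yields the adjoint $3$-cocycle identity.
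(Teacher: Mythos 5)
Your proposal is correct and follows essentially the same route as the paper: parts (1), (2) and (5) read off directly from the defining relation, part (3) by pushing the nucleus element $a$ through a doubly-bracketed product and cancelling (the paper expands $(au)(vw)$ where you expand $((au)v)w$, an immaterial difference), and part (4) by the same pentagon comparison of two reassociation routes for $((uv)w)z$, with the conjugate $u\varphi(v,w,z)u^{-1}$ entering exactly as in the paper. You also correctly identify the role of both halves of quasiassociativity --- that $\varphi$ lands in $N(\Gcal)$ and that its conjugates do too --- which is precisely the point the paper flags for its fourth equality.
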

\proof
Parts (1)-(2) are immediate from the definition of $\varphi$, cancellation in the quasigroup and the inverse of $uv$. Part (3) follows from expanding $(au)(vw)$ in two ways, on the one hand as $a(u(vw))=a(\varphi(u,v,w)((uv)w))=(a\varphi(u,v,w))((uv)w)$, and on the other hand as 
\[ \varphi(au,v,w)(((au)v)w)=\varphi(au,v,w)((a(uv)w)=\varphi(au,v,w)(a((uv)w))=(\varphi(au,v,w)a)((uv)w).\] Comparing and cancelling the factor $((uv)w)$ gives the result. The other forms of the equation are similar. Part (4)  follows similarly by comparing
\begin{eqnarray*}
((uv)w)z	&	=	& (\varphi(u,v,w)(u(vw)))w = \varphi(u,v,w)((u(vw))z) = \varphi(u,v,w)(\varphi(u,vw,z)(u((vw)z)))\\
	&	=	& \varphi(u,v,w)\varphi(u,vw,z)(u((vw)z))	=	\varphi(u,v,w)\varphi(u,vw,z)(u(\varphi(v,w,z)(v(wz))))\\
	&	=	& \varphi(u,v,w)\varphi(u,vw,z)((u\varphi(v,w,z))(v(wz))) = \varphi(u,v,w)\varphi(u,vw,z)((\varphi_u(v,w,z)u)(v(wz)))\\
	&	=	& \varphi(u,v,w)\varphi(u,vw,z)(\varphi_u(u(v(wz)))) = \varphi(u,v,w)\varphi(u,vw,z)\varphi(v,w,z)(u(v(wz)))\\
((uv)w)z	&	=	& \varphi(uv,w,z)((uv)(wz)) = \varphi(uv,w,z)(\varphi(u,v,wz)(u(v(wz))))\\
	&	=	& \varphi(uv,w,z)\varphi(u,v,wz)(u(v(wz)))
\end{eqnarray*}
where $\varphi_u(v,w,z)=u\varphi(v,w,z)u^{-1}$. We use that $\varphi\in N(\Gcal)$ but we also need for the 4th equality that $\varphi_u\in N(\Gcal)$, which we have included in the definition of quasiassociativity. Part (5) is again immediate.
\endproof

Part (3) makes it clear that $N(\Gcal)$ is indeed a group (eg set $u=a^{-1}$ in the first equality). Clearly many further results are possible along these lines, for example:

\begin{proposition} A quasiassociative quasigroup $\Gcal$ is Moufang {\em iff} it is alternative and one of
\[  u\varphi(v,u,w)u^{-1}=\varphi(u,vu,w)^{-1},\quad \varphi(u,vw,v)=\varphi(u,v,w)^{-1},\quad \varphi(uv,w,u)=\varphi(u,v,w) \]
holds for all $u,v,w\in \Gcal$. 
\end{proposition}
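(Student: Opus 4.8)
The plan is to rewrite each of the three equivalent Moufang identities of the earlier lemma as an identity among values of $\varphi$, and then to combine this with the already-observed fact that Moufang implies alternative. The whole computation rests on two features of the quasiassociative hypothesis: every value of $\varphi$ lies in the nucleus $N(\Gcal)$, so such factors associate with everything and may be collected on the left; and every conjugate $u\varphi(\cdot,\cdot,\cdot)u^{-1}=\varphi_u(\cdot,\cdot,\cdot)$ is again nuclear. The basic move is to reassociate both sides of a Moufang identity into one common bracketing of $u,v,w$, picking up a product of $\varphi$-factors on the left, and then to cancel the common group element; since the leftover factor is nuclear, equality forces it to be $e$.

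For the third identity $(uv)(wu)=(u(vw))u$ this is immediate and needs nothing beyond quasiassociativity: writing $u(vw)=\varphi(u,v,w)^{-1}((uv)w)$ and then $((uv)w)u=\varphi(uv,w,u)((uv)(wu))$ gives
\[ (u(vw))u=\varphi(u,v,w)^{-1}\varphi(uv,w,u)\,\big((uv)(wu)\big), \]
so the identity is equivalent to $\varphi(uv,w,u)=\varphi(u,v,w)$, the third displayed condition. For the first identity $u(v(uw))=((uv)u)w$ I would first use flexibility, available from alternativity, to replace $(uv)u$ by $u(vu)$, so that $((uv)u)w=(u(vu))w=\varphi(u,vu,w)(u((vu)w))$; then expanding $(vu)w=\varphi(v,u,w)(v(uw))$ and moving the nuclear factor past the outer $u$ via $u\,a=(uau^{-1})\,u$ produces
\[ u((vu)w)=\varphi_u(v,u,w)\,\big(u(v(uw))\big), \]
so that the identity becomes $\varphi(u,vu,w)\,\varphi_u(v,u,w)=e$, i.e. $u\varphi(v,u,w)u^{-1}=\varphi(u,vu,w)^{-1}$, the first displayed condition. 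The second identity $((uv)w)v=u(v(wv))$ is handled the same way, using flexibility to replace $v(wv)$ by $(vw)v$ and reassociating both sides to the common form $(u(vw))v$; this yields $\varphi(u,v,w)=\varphi(u,vw,v)^{-1}$, the second displayed condition.

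With these three translations in hand the proposition assembles quickly. Each reassociation above is a chain of equalities and a single cancellation, hence reversible, so under the standing hypotheses each Moufang identity is equivalent to its displayed $\varphi$-condition --- the first two requiring flexibility, the third not. For the forward direction, a Moufang quasigroup is alternative and satisfies all three Moufang identities, hence satisfies all three conditions; in particular one of them. Conversely, if $\Gcal$ is alternative (so in particular flexible) and one of the three conditions holds, then the corresponding Moufang identity holds, and by the equivalence of the three Moufang identities $\Gcal$ is Moufang.

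The main obstacle is purely one of bookkeeping: keeping track of where each $\varphi$-factor lands as one reassociates, and justifying at each stage that a factor being moved is genuinely nuclear --- this is exactly where quasiassociativity (including the nuclearity of the conjugates $\varphi_u$) is used, and where the conjugation step for the first condition introduces $\varphi_u$ rather than $\varphi$. One must also check that the common group element cancelled is the same on both sides so that the leftover nuclear factor is forced to equal $e$, which is what makes each translated condition equivalent --- not merely implied by --- its Moufang identity.
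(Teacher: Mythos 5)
Your proof is correct, and it establishes exactly the three translations the paper has in mind: each Moufang identity, under quasiassociativity (and, for the first two, flexibility), is equivalent to the corresponding displayed $\varphi$-condition, after which Lemma~3.2 and the fact that Moufang implies alternative assemble the statement. The only real divergence from the paper is in how those translations are obtained: the paper's (one-line) proof invokes Proposition~3.4(4), the adjoint 3-cocycle identity $\varphi(u,v,w)\varphi(u,vw,z)\varphi_u(v,w,z)=\varphi(uv,w,z)\varphi(u,v,wz)$, together with part (5) (flexibility as $\varphi(u,v,u)=e$), to convert the expansion $\varphi(uv,u,w)\varphi(u,v,uw)=e$ of the first Moufang identity into $\varphi(u,vu,w)\varphi_u(v,u,w)=e$; you instead bypass the cocycle identity entirely and re-derive precisely the needed instances by direct reassociation, applying flexibility at the level of elements ($((uv)u)w=(u(vu))w$) before introducing any $\varphi$-factors. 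The two routes are mathematically the same computation packaged differently: citing the cocycle identity makes the proof a one-liner once Proposition~3.4 is in place, while your version is self-contained, makes explicit exactly where the nuclearity of the conjugates $\varphi_u$ (i.e.\ the full strength of quasiassociativity) and the identity $ua=(uau^{-1})u$ are used, and isolates the fact that the third condition $\varphi(uv,w,u)=\varphi(u,v,w)$ requires no flexibility at all. Your cancellation step is also properly justified: since the leftover factor is nuclear, $x=ax$ forces $a=(ax)x^{-1}=a(xx^{-1})$, hence $a=e$ by the inverse property.
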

\proof We use parts (4),(5) of the above and the three equivalent versions of the Moufang identities. \endproof

Clearly the invertible elements of the octonions $\O$ form a quasigroup, as this follows from the fact that they are a Moufang loop. The $S^7$ of unit octonions and $\Gcal_\O$ are therefore subquasigroups. In our theory these facts are easily proven from properties of $F$. The notion of sub-quasigroup here is the obvious one and note that $\varphi$ on the subquasigroup is the restriction of $\varphi$ on the larger one. 

\begin{proposition} For any $F$ on $\Z_2^n$ with the composition property, $S^{2^n-1}$ is a quasigroup and $\Gcal_n$ is a sub-quasigroup. Moreover, $\Gcal_n$ is central and quasiassociative and $\varphi(\pm e_a,\pm e_b,\pm e_c)=\phi(a,b,c)$, reproducing the coboundary $\phi=\del F$. 
\end{proposition}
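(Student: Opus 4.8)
The plan is to verify the quasigroup axioms for $S^{2^n-1}$ directly, to get $\Gcal_n$ as a sub-quasigroup by closure, and then to compute $\varphi$ on $\Gcal_n$ explicitly. The identity and closure are immediate: since $F(0,a)=F(a,0)=1$ the element $e_0$ is a two-sided unit, and multiplicativity of $q$ gives $q(uv)=q(u)q(v)=1$ whenever $q(u)=q(v)=1$, so $S^{2^n-1}$ is closed under the product. The preceding Lemma already exhibits $u^{-1}=2u_0-u$, which has unit norm as well (its coefficients differ from those of $u$ only by signs), so $S^{2^n-1}$ is also closed under inversion.

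The one substantive point is the two-sided inverse property $u^{-1}(uv)=v$ and $(vu)u^{-1}=v$, which does \emph{not} follow from $uu^{-1}=u^{-1}u=1$ alone in a nonassociative setting. Here I would pass to the symmetric bilinear form $\langle x,y\rangle=\sum_a x_ay_a$ associated to $q$, nondegenerate since ${\rm char}\,k\neq 2$, and polarise the composition identity $q(xy)=q(x)q(y)$. Replacing $y$ by $y+z$ gives $\langle xy,xz\rangle=q(x)\langle y,z\rangle$, and a further polarisation in $x$ with one argument set to $e_0$ yields the adjoint relation $\langle xy,z\rangle=\langle y,\bar x z\rangle$, where $\bar x=2\langle x,e_0\rangle e_0-x$ is the conjugate and $\overline{\bar x}=x$. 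Combining these, for every $z$ one has $\langle\bar x(xy),z\rangle=\langle xy,xz\rangle=q(x)\langle y,z\rangle$, whence $\bar x(xy)=q(x)y$ by nondegeneracy, and symmetrically $(yx)\bar x=q(x)y$. For $u$ of unit norm $\bar u=2u_0-u=u^{-1}$ and $q(u)=1$, so these are exactly the inverse-property identities; thus $S^{2^n-1}$ is a quasigroup. I expect this polarisation step, rather than any manipulation of $F$, to be the crux of the argument, the remainder being formal.

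Since $F$ takes values $\pm1$, the set $\Gcal_n=\{\pm e_a\}$ is closed under $e_ae_b=F(a,b)e_{a+b}$, contains $e_0$, and is closed under $u\mapsto u^{-1}$ (as $e_a^{-1}=-e_a$ for $a\neq0$); being closed under product and inverse inside $S^{2^n-1}$ it is a sub-quasigroup, and its multiplicative associator is the restriction of $\varphi$. To compute $\varphi$ on $\Gcal_n$ I would expand both bracketings of a triple product of signed basis elements with signs $\sigma_i\in\{\pm1\}$:
\begin{eqnarray*}
((\sigma_1 e_a)(\sigma_2 e_b))(\sigma_3 e_c)&=&\sigma_1\sigma_2\sigma_3\,F(a,b)F(a+b,c)\,e_{a+b+c},\\
(\sigma_1 e_a)((\sigma_2 e_b)(\sigma_3 e_c))&=&\sigma_1\sigma_2\sigma_3\,F(b,c)F(a,b+c)\,e_{a+b+c}.
\end{eqnarray*}
The signs cancel, so from $(uv)w=\varphi(u,v,w)(u(vw))$ one reads off
\[ \varphi(\pm e_a,\pm e_b,\pm e_c)=\frac{F(a,b)F(a+b,c)}{F(b,c)F(a,b+c)}\,e_0=\phi(a,b,c)e_0,\]
which by (\ref{square}) equals $\pm e_0$ and is precisely the coboundary $\phi=\del F$.

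Finally, the image of $\varphi$ on $\Gcal_n$ lies in $\{\pm e_0\}$, and $\pm e_0$ are central, commuting with and associating through every element, so $\Gcal_n$ is central. Since central elements lie in the nucleus $N(\Gcal_n)$ and are unaffected by conjugation, the conjugates $u\varphi u^{-1}$ also take values in $N(\Gcal_n)$, so $\Gcal_n$ is quasiassociative, completing the proof.
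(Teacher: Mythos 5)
Your proof is correct, but it reaches the key step by a genuinely different route than the paper. The paper stays entirely inside the structure-constant framework of Section 2: it expands $u^{-1}(uv)=\sum_{a,b,c}u_au_bv_c\,e_{a+b+c}\,F(a,a)\phi(a,b,c)F(a,b)F(a+b,c)$ in the basis, shows the diagonal terms $a=b$ reproduce $v$, and shows the off-diagonal terms cancel in pairs $(a,b)\leftrightarrow(b,a)$ using $F(a,a)=-1$, the antisymmetry $F(a,b)=-F(b,a)$ for distinct nonzero arguments, and the symmetry of $\phi$ from Lemma~2.1. You instead isolate the crux correctly --- that $uu^{-1}=u^{-1}u=1$ does not imply the inverse property in a nonassociative algebra --- and fill it with the classical composition-algebra argument: polarise $q(xy)=q(x)q(y)$ to get $\langle xy,xz\rangle=q(x)\langle y,z\rangle$, derive the adjoint relation $\langle xy,z\rangle=\langle y,\bar{x}z\rangle$, and conclude $\bar{x}(xy)=q(x)y$ by nondegeneracy; this is legitimate here since the Gram matrix of the form in the basis $\{e_a\}$ is the identity and ${\rm char}\,k\neq 2$ is in force, so both polarisation steps go through. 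Your argument is more conceptual and more general (it shows the unit sphere of \emph{any} unital composition algebra is an IP loop, with no reference to the $\Z_2^n$-grading or the identities (\ref{canr})--(\ref{quad3})), whereas the paper's computation has the side benefit of producing an explicit basis formula for $\varphi(u,v,w)$ on all of $S^{2^n-1}$, which the authors note they need later. For the restriction of $\varphi$ to $\Gcal_n$ your method is also different and arguably cleaner: you compare the two bracketings of a triple product directly and read off $\varphi=\phi(a,b,c)e_0$ by right cancellation, avoiding the paper's case analysis of $-(e_a(e_be_c))(e_c(e_be_a))$ through the values of $\Rcal$, $\phi$ and $F(a+b+c,a+b+c)$. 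The centrality and quasiassociativity of $\Gcal_n$ are handled essentially the same way in both proofs, via the image of $\varphi$ lying in $\{\pm e_0\}$.
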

\proof Working in $k_FG$ we have inverses for $S^{2^n-1}$ given by $(u^{-1})_a=F(a,a)u_a$ as explained at Lemma~2.2. One can directly verify the slightly stronger quasigroup identity by direct computation inside $k_FG$. Thus
\[ u^{-1}(uv)=\sum_{a,b,c} u_au_bv_c e_{a+b+c} F(a,a)\phi(a,b,c)F(a,b)F(a+b,c) \]
Consider the case when $a=b$, this gives
\[ \sum_{a,c}u_a^2v_c e_c F(a,a)\phi(a,a,c)F(a,a)F(0,c)=\sum_{a,c}u_a^2v_c e_c =\sum_c v_c e_c=v \]
Now consider the case when $a\ne b$. We claim that the term with given values for $a$ and $b$ cancels with the term with $a'=b,b'=a$. These give, respectively,
\[ \sum_c u_au_bv_c e_{a+b+c} F(a,a)\phi(a,b,c)F(a,b)F(a+b,c)\]
\[ \sum_c u_bu_av_c e_{b+a+c} F(b,b)\phi(b,a,c)F(b,a)F(b+a,c)\]
When $a=0$ and hence $b\ne 0$, these become
\[ \sum_c u_0u_bv_c e_{b+c} F(b,c)\]
\[ \sum_c u_bu_0v_c e_{b+c} F(b,b)F(b,c)= - \sum_c u_0u_bv_c e_{b+c} F(b,c)\]
which cancel. When $a,b\ne 0$, thse become
\[ -\sum_c u_au_bv_c e_{a+b+c} \phi(a,b,c)F(a,b)F(a+b,c)\]
\[ - \sum_c u_bu_av_c e_{b+a+c} \phi(b,a,c)F(b,a)F(b+a,c)=\sum_cu_au_bv_c e_{a+b+c} \phi(a,b,c)F(a,b)F(a+b,c)\]
which also clearly cancel. Hence,
\[\sum_{a,b,c} u_au_bv_c e_{a+b+c} F(a,a)\phi(a,b,c)F(a,b)F(a+b,c) = v = \eps(u)v\]
as required. We will later need an explicit formula for $\varphi$, easily computed from the second version of $\varphi$ as 
\begin{eqnarray*}
\varphi(u,v,w)	&	=	&	\sum_{a,b,c,a',b',c'} u_av_bw_c u_{a'} v_{b'} w_{c'} F(a',a')F(b',b')F(c',c')F(b,c)F(a,b+c)\\
	&		& F(b',a')F(c',b'+a')F(a+b+c,a'+b'+c') e_{a+b+c+a'+b'+c'}.
\end{eqnarray*}
To prove the restriction of $\varphi$ to $\Gcal_n$ we compute from this, or obtain it directly as
\begin{eqnarray*}
\varphi(e_a,e_b,e_c)	&=	&-(e_a(e_be_c))(e_c(e_be_a))\\
									&=	&-F(b,c)F(a,b+c)F(b,a)F(c,b+a)F(a+b+c,a+b+c) \\
									&=	&-\phi(a,b,c)\Rcal(a,b)\Rcal(a+b,c)F(a+b+c,a+b+c)
\end{eqnarray*}
Now, we need only consider the case when $a,b,c\ne 0$ as the trivial cases clearly coincide. For the same reason we can assume that $a,b,c$ are distinct. In that case $\Rcal(a,b)=-1$ cancels the - sign at front. Also, $a+b\ne 0$ and if $a+b+c\ne 0$ we have $\Rcal(a+b,c)=-1$ and $F(a+b+c,a+b+c)=-1$. If $a+b=c$ we have both of these factors $+1$. Hence in call cases the right hand side is $\phi(a,b,c)$. \endproof

The same result as for $S^{2^n-1}$ also applies to $k^\times_FG$ in the composition case (we just keep track of $q(a)$). Meanwhile, the result for $\Gcal_n$ can also be obtained `constructively' as a special case $C=\{\pm 1\}\subset k^*$ of the following general construction (the conditions on $F$ following from (\ref{square})-(\ref{Fxx})). We write all groups multiplicatively here.

\begin{proposition} Let $F$ be any unital 2-cochain on a group $G$ with values in an Abelian group $C$ and $\Gcal_F=C\times G$ with product $(\lambda,a)(\mu,b)=(\lambda\mu F(a,b),ab)$ for all $a,b\in G$ and $\lambda,\mu\in C$. Then 

\begin{enumerate}\item $\Gcal_F$  is a quasigroup {\em iff} $\phi(a^{-1},a,b)=\phi(b,a^{-1},a)=1$ for all $a,b\in G$, where $\phi=\del F$.
\item This happens {\em iff} $F(a,b)F(a^{-1},ab)=F(ba^{-1},a)F(b,a^{-1})=F(a^{-1},a)$ for all $a,b\in G$. 
\item The quasigroup $\Gcal_F$ is central, quasiassociative, and $C\hookrightarrow \Gcal_F\twoheadrightarrow G$ as (quasi)groups. 
\end{enumerate}
\end{proposition}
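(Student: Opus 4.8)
The plan is to extract everything from the single computation of the multiplicative associator of $\Gcal_F$. Expanding $((\lambda,a)(\mu,b))(\nu,c)$ and $(\lambda,a)((\mu,b)(\nu,c))$ straight from the definition, both have $G$-component $abc$ while their $C$-components differ by the factor $F(a,b)F(ab,c)/(F(b,c)F(a,bc))=\phi(a,b,c)$, where $\phi=\del F$. Hence the associator is $\varphi((\lambda,a),(\mu,b),(\nu,c))=(\phi(a,b,c),e)$, independent of $\lambda,\mu,\nu$, and the identity is $(1,e)$ by unitality of $F$. I would record these two facts first, as they drive all three parts.

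For (1) and (2) I would analyse the two inverse-property identities separately. Since $u^{-1}u=(1,e)$ projects on $G$ to the group relation, any two-sided inverse of $u=(\lambda,a)$ must take the form $(\kappa,a^{-1})$. Computing $u^{-1}(uv)$ on $v=(\mu,b)$ reduces the left identity to the scalar requirement $\kappa\lambda\,F(a,b)F(a^{-1},ab)=1$ for all $b$, and computing $(vu)u^{-1}$ reduces the right identity to $\lambda\kappa\,F(b,a)F(ba,a^{-1})=1$ for all $b$. Putting $b=e$ in each fixes $\kappa=\lambda^{-1}F(a^{-1},a)^{-1}$ (using $F(a,e)=F(e,a)=1$) and, comparing the two, forces $F(a,a^{-1})=F(a^{-1},a)$ in the Abelian group $C$. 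Substituting this normalisation back, the left identity becomes $F(a,b)F(a^{-1},ab)=F(a^{-1},a)$, which is exactly $\phi(a^{-1},a,b)=1$ once one notes that the remaining factors $F(e,b)$ and $F(a^{-1}a,b)$ in $\phi(a^{-1},a,b)$ are trivial; the right identity becomes $F(b,a)F(ba,a^{-1})=F(a^{-1},a)$, and the bijective substitution $b\mapsto ba^{-1}$ rewrites it as $F(ba^{-1},a)F(b,a^{-1})=F(a^{-1},a)$, i.e. $\phi(b,a^{-1},a)=1$. This gives simultaneously the $\phi$-criterion of (1) and the explicit $F$-criterion of (2).

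For the converse I would check that these conditions actually manufacture a genuine inverse: the needed consistency $F(a,a^{-1})=F(a^{-1},a)$ is recovered by setting $b=a^{-1}$ in $\phi(a^{-1},a,b)=1$, after which $(\lambda^{-1}F(a^{-1},a)^{-1},a^{-1})$ satisfies both identities for all $v$, so $\Gcal_F$ is a quasigroup. Part (3) is then essentially formal. The associator formula shows $\im\varphi\subseteq C\times\{e\}$; a direct check gives $(\gamma,e)(\mu,b)=(\gamma\mu,b)=(\mu,b)(\gamma,e)$ and that $C\times\{e\}$ lies in $N(\Gcal_F)$, so $C\times\{e\}\subseteq Z(\Gcal_F)$ and $\varphi$ is central. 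Since its values are central they are fixed under conjugation, $u\varphi u^{-1}=\varphi\in N(\Gcal_F)$, which is quasiassociativity. Finally $\gamma\mapsto(\gamma,e)$ is an injective group homomorphism and $(\lambda,a)\mapsto a$ a surjective quasigroup homomorphism with kernel $C\times\{e\}$, giving $C\hookrightarrow\Gcal_F\twoheadrightarrow G$.

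The one delicate point throughout is the indexing bookkeeping in (1)--(2): the left and right inverse-property identities produce $F$-relations that look asymmetric, and it is the reindexing $b\mapsto ba^{-1}$ together with the forced equality $F(a,a^{-1})=F(a^{-1},a)$ that reconciles them and lets a single element serve as the two-sided inverse. Everything else is routine substitution in the $C\times G$ product.
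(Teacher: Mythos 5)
Your proposal is correct and follows essentially the same route as the paper: identify the identity $(1,e)$, observe that any two-sided inverse of $(\lambda,a)$ is forced to be $(\lambda^{-1}F(a^{-1},a)^{-1},a^{-1})$, reduce the two inverse-property identities to the $F$-conditions of (2), translate these into the $\phi$-conditions of (1) via $\phi=\del F$, and deduce (3) from unitality of $F$ (the paper argues centrality and quasiassociativity exactly as you do, since $\phi=1$ whenever an argument is $e$). The only differences are that you make explicit two points the paper leaves implicit — the closed-form associator $\varphi=(\phi(a,b,c),e)$ and the consistency $F(a,a^{-1})=F(a^{-1},a)$ reconciling the left and right inverses — which is a welcome but not structurally different elaboration.
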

\proof $F$ a untal 2-cochain just means $F(a,e)=F(e,a)=1$ (the identity of $C$) for all $a\in G$. The identity in $\Gcal$ is $(1,e)$ and the inverse if it exists (from the left, say) must then be $(\lambda,a)^{-1}=({\lambda^{-1}\over F(a^{-1},a)},a^{-1})$. The two quasigroup identities reduce to those for $F$ stated in (2) and also imply that this is a right inverse. We interpret (2) as conditions (1) on $\phi$ from its definition as $\phi=\del F$. For the last part, clearly $C\subset \Gcal_F$ is central as $F(a,e)=F(e,a)=1$ and quasiassociative because   any element of $C$ associates with all elements of $\Gcal_F$ as $\phi=1$ if any argument is $e$, which again follows from $F$ unital. \endproof

\section{Hopf quasigroups}

In this section we `linearize' the notion of (an inverse property) quasigroup in the same way that a quantum group or Hopf algebra linearises the notion of a group, i.e we develop axioms for `quantum quasigroups' or `quantum IP loops' if one prefers. While we believe that `truly quantum' examples not obtained from quasigroups exist, our present goal is only to formalise some axioms as a warm-up to the next section. The theory at this level works over any field $k$, or with care over a commutative ring.

\begin{definition}  A \textit{Hopf quasigroup} is a possibly-nonassociative but unital algebra $H$ equipped with algebra homomorphisms $\Delta:H\to H\tens H$, $\eps:H\to k$ forming a coassociative coalgebra and a map $S:H\to H$ such that
\begin{equation*} m(\id\tens m)(S\tens\id\tens\id)(\Delta\tens\id)=\eps\tens \id = m(\id\tens m)(\id\tens S\tens\id)(\Delta\tens\id) \end{equation*}
\begin{equation*} m(m\tens\id)(\id\tens S\tens\id)(\id\tens\Delta)=\id\tens\eps = m(m\tens\id)(\id\tens\id\tens S)(\id\tens\Delta) \end{equation*}
A Hopf quasigroup is \textit{flexible} if 
\[ h\o(gh\t)=(h\o g)h\t \quad \forall h,g\in H \]
and \textit{alternative} if also 
\[ h\o(h\t g)=(h\o h\t)g,\quad h(g\o g\t)=(hg\o)g\t \quad \forall h,g\in H\]
$H$ is called \textit{Moufang} if 
\[h\o(g(h\t f))=((h\o g)h\t)f \quad \forall h,g,f\in H\]
\end{definition}

Coassociative coalgebra here means in the usual sense
\[(\id\tens\Delta)\Delta=(\Delta\tens\id)\Delta,\quad (\id\tens\eps)\Delta=(\eps\tens\id)\Delta=\id\]
as for Hopf algebras\cite{Ma:book}. We use the usual `Sweedler' notation for coalgebras $\Delta h=h\o\tens h\t$ etc.

\begin{proposition} In any Hopf quasigroup one has
\begin{enumerate}
\item $m(S\tens\id)\Delta=1.\eps=m(\id\tens S)\Delta$
\item $S$ is antimultiplicatve $S(hg)=(Sg)(Sh)$ for all $h,g\in H$
\item $S$ is anticomultiplicative $\Delta(S(h))=S(h\t)\tens S(h\o)$ for all $h\in H$. 
\end{enumerate}
Hence a Hopf quasigroup is a Hopf algebra {\em iff} its product is associative.
\label{antipode}
\end{proposition}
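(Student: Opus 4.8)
The plan is to put all four defining identities into Sweedler form,
\[ S(h\o)(h\t g)=\eps(h)g,\quad h\o(S(h\t)g)=\eps(h)g,\quad (gh\o)S(h\t)=\eps(h)g,\quad (gS(h\o))h\t=\eps(h)g, \]
and to regard these as the linear analogues of the left and right inverse properties $u^{-1}(uv)=v$ and $(vu)u^{-1}=v$ of a quasigroup. With this dictionary each assertion becomes the linearisation of a corresponding elementary quasigroup fact from Section 3.

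For (1) I would simply set $g=1$. The first two identities then read $S(h\o)h\t=\eps(h)1$ and $h\o S(h\t)=\eps(h)1$, which is exactly $m(S\tens\id)\Delta=1.\eps=m(\id\tens S)\Delta$; the remaining two give the same two equations, so no nonassociativity enters. This step also disposes of the final assertion: a Hopf quasigroup is automatically a unital algebra and a coassociative coalgebra with $\Delta,\eps$ algebra maps, i.e.\ a bialgebra as soon as its product is associative, and (1) is then precisely the usual Hopf-algebra antipode axiom, so $H$ is a Hopf algebra; conversely a Hopf algebra is associative by definition. Hence associativity is equivalent to being a Hopf algebra.

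Part (2) carries the real content. The usual Hopf-algebra proof---showing that $h\tens g\mapsto S(hg)$ and $h\tens g\mapsto (Sg)(Sh)$ are two-sided convolution inverses of $m$ in $\Hom(H\tens H,H)$ and invoking uniqueness---is unavailable here, because $H$ is nonassociative, so convolution is nonassociative and inverses need not be unique. Instead I would linearise the elementary proof of $(uv)^{-1}=v^{-1}u^{-1}$ given after Definition 3.1, which the text explicitly flags as a template: apply the strong right identity $(g\,a\o)S(a\t)=\eps(a)g$ and the strong left identity $S(a\o)(a\t g)=\eps(a)g$ to a product $a=hg$ (using that $\Delta$ is an algebra map, so $\Delta(hg)=h\o g\o\tens h\t g\t$), introducing a Sweedler coproduct each time a variable is reused in two roles, exactly as $u$ occurs in both $u^{-1}$ and $uv$ classically. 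Evaluating one carefully chosen expression in two ways---once by the left and once by the right strong identity---should collapse it to $S(hg)$ on one side and to $(Sg)(Sh)$ on the other. I expect this to be the main obstacle: because one may not reassociate, the whole difficulty is to choose the bracketings and the order of substitutions so that the strong identities literally apply (one cannot simply slide an $S(h\o)$ next to an $h\t$), and the bookkeeping of which coproduct leg plays which role is delicate.

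For (3) I would exploit that $\Delta$ is an algebra homomorphism together with (2). Applying $\Delta$ to $S(h\o)h\t=\eps(h)1$ from (1) gives, in $H\tens H$,
\[ \Delta(S(h\o))\,(h\t\tens h\th)=\eps(h)\,1\tens1, \]
exhibiting $\Delta(Sh)$ as a one-sided inverse of $\Delta h$ in $H\tens H$. One then checks that $S(h\t)\tens S(h\o)$ satisfies the matching relations---here (2) is what forces the order of the two legs to reverse, since the product of $H\tens H$ is componentwise and antimultiplicativity reverses each component---and concludes by the same cancellation template used in (2). Alternatively (3) can be proved by a direct Sweedler computation parallel to (2); either way the combinatorial core is again the index management of part (2), while (3) itself adds only the observation that the flip is produced by antimultiplicativity.
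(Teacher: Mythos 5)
You have correctly identified the paper's strategy --- part (1) by specialising the axioms at $g=1$, the final equivalence exactly as you state it, and for (2)--(3) a linearisation of the classical IP-quasigroup argument in which one carefully chosen expression is evaluated in two ways --- but your write-up stops precisely where the proof starts. The entire content of (2) and (3) is the choice of that expression and the check that both evaluations use only the three-factor axioms (no reassociation is ever performed), and you explicitly defer this step (``should collapse'', ``the bookkeeping \ldots is delicate''). For (2) the expression is
\[ S(g\o)\Bigl(\bigl(S(h\o)(h\t g\t)\bigr)\,S(h\th g\th)\Bigr), \]
the linearisation of $v^{-1}\bigl((u^{-1}(uv))(uv)^{-1}\bigr)$. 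Since $\Delta$ is an algebra map and the coalgebra is coassociative, $h\o\tens g\o\tens h\t g\t\tens h\th g\th = h\o\tens g\o\tens \Delta(h\t g\t)$, so the axiom $(a\,b\o)S(b\t)=a\,\eps(b)$ applied with $b=h\t g\t$ collapses the expression to $S(g\o)S(h\o)\eps(h\t)\eps(g\t)=S(g)S(h)$; alternatively, applying $S(a\o)(a\t b)=\eps(a)b$ twice --- first to collapse $S(h\o\o)(h\o\t g\o\t)$ to $\eps(h\o)g\o\t$, then to collapse $S(g\o\o)\bigl(g\o\t S(hg\t)\bigr)$ to $S(hg)$ --- yields the other evaluation. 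For (3) the paper likewise evaluates the five-legged expression
\[ S(h\t)(h\th S(h_{(5)})\o)\tens S(h\o)(h_{(4)}S(h_{(5)})\t) \]
in two ways using the three-factor axioms and part (1); producing this expression is exactly the step your sketch is missing.

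A further caution on your preferred route for (3): from $\Delta(S(h\o))\,\Delta(h\t)=\eps(h)1\tens 1$ and the analogous identity satisfied by $S(h\t)\tens S(h\o)$ you cannot conclude that the two coincide --- that would be a uniqueness-of-one-sided-convolution-inverses argument, and you yourself observed in (2) that precisely this uniqueness fails when the ambient product is nonassociative (and the componentwise product on $H\tens H$ is nonassociative whenever that of $H$ is). So only your fallback, ``a direct Sweedler computation parallel to (2)'', is viable, and it is the displayed computation above. In short: the strategy is right and is the same as the paper's, but as submitted this is a proof plan rather than a proof, with the gap sitting exactly at the step you flagged as the main obstacle.
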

\proof
(1) is obtained by applying the first identity in the definition of a Hopf quasigroup to $(h\tens 1)$.
To prove (2), we consider $S(g\o)((S(h\o)(h\t g\t))S(h\th g\th))$. Using identites in the definition of a Hopf quasigroup, on the one hand this equals
\[ S(g\o)((S(h\o)(hg)\t\o)S((hg)\t\t)) = S(g\o)S(h\o)\eps(hg) = S(g)S(h)\]
While on the other hand, this equals
\[ S(g\o\o)((S(h\o\o)(h\o\t g\o\t))S(h\t g\t)) = S(g\o\o)(g\o\t S(hg\t)) = S(hg)\]
This holds for all $h,g\in H$, as required.

To prove (3) we consider $S(h\t)(h\th S(h_{(5)})\o)\tens S(h\o)(h_{(4)}S(h_{(5)})\t)$. On one hand, this equals
\[ S(h\o\t)(h\t\o\o S(h\t\t)\o)\tens S(h\o\o)(h\t\o\t S(h\t\t)\t) \]
Using that $\Delta$ is an algebra homomorphism, and (1), this equals
\[ S(h\o\t)\eps(h\t)\o \tens S(h\o\o)\eps(h\t)\t = S(h\t)\tens S(h\o) \]
Now, we can also write the original expression as
\[ S(h\o\o\t\o)(h\o\o\t\t S(h\t)\o)\tens S(h\o\o\o)(h\o\t S(h\t)\t) \]
Using properties of the Hopf quasigroup, this equals
\[ S(h\t)\o \tens S(h\o\o)(h\o\t S(h\t)\t) = S(h)\o\tens S(h)\t = \Delta(S(h))\]
This holds for all $h\in H$, hence $\Delta(S(h))=S(h\t)\tens S(h\o)$, as required.
\endproof

\begin{proposition}
Let $H$ be a Hopf quasigroup, then $S^2=\id$ if $H$ is commutative or cocommutative.
\end{proposition}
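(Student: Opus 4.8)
The plan is to avoid the standard Hopf-algebra argument that $S^2$ is the convolution inverse of $S$ and hence equals $\id$: without associativity of the product, convolution on $\Hom(H,H)$ is itself non-associative, so uniqueness of convolution inverses is unavailable. Instead, in each case I would exhibit a single carefully bracketed triple product that evaluates both to $h$ and to $S^2(h)$, where the two evaluations each invoke one of the pre-bracketed axioms from the definition together with Proposition~\ref{antipode}, and crucially never require reassociating.

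First I would establish an auxiliary identity valid in \emph{any} Hopf quasigroup. Applying the linear map $S$ to $S(h\o)h\t=\eps(h)1$ from Proposition~\ref{antipode}(1), and using antimultiplicativity (Proposition~\ref{antipode}(2)) together with $S(1)=1$ (itself immediate from Proposition~\ref{antipode}(1) applied to $1$), gives
\[ S(h\t)\,S^2(h\o)=\eps(h)1,\qquad \forall h\in H. \]
This step uses no associativity or (co)commutativity, only that $S$ is linear and reverses products.

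For the cocommutative case, cocommutativity rewrites the auxiliary identity as $S(h\o)S^2(h\t)=\eps(h)1$. I would then study
\[ X=h\o\big(S(h\t)\,S^2(h\th)\big). \]
Evaluating the inner product $S(h\t)S^2(h\th)$ by the auxiliary identity (coassociatively treating $h\t\tens h\th$ as a coproduct) collapses $X$ to $h\o\eps(h\t)1=h$; evaluating instead by the axiom $h\o(S(h\t)g)=\eps(h)g$ with $g=S^2(h\th)$ collapses $X$ to $S^2(h)$. Hence $S^2=\id$. The commutative case is the mirror image: commutativity rewrites the auxiliary identity as $S^2(h\o)S(h\t)=\eps(h)1$, and I would evaluate
\[ X'=\big(S^2(h\o)\,S(h\t)\big)h\th \]
once via this identity, obtaining $h$, and once via the axiom $(gS(h\o))h\t=g\eps(h)$ with $g=S^2(h\o)$, obtaining $S^2(h)$.

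The main obstacle is exactly the missing associativity: since I may not reassociate, the entire proof rests on choosing the nesting of the triple product so that each evaluation matches one of the four axioms precisely as that axiom is bracketed. The delicate part is the Sweedler bookkeeping — arranging, via coassociativity of $\Delta$, that the inner pair collapsed by the auxiliary identity and the triple collapsed by the axiom refer to compatible coproduct components. Once the correct bracketing is pinned down, both evaluations are one-line applications of the identities already in hand.
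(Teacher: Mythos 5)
Your proof is correct: the auxiliary identity $S(h\t)S^2(h\o)=\eps(h)1$ does follow by applying $S$ to $S(h\o)h\t=\eps(h)1$ (Proposition~\ref{antipode}(1)) via antimultiplicativity and $S(1)=1$, and in each of your four evaluations the Sweedler regrouping is licensed by coassociativity of $\Delta$, with the Hopf quasigroup axioms invoked exactly as they are bracketed in the definition. It is not, however, the paper's route. The paper starts from $S^2(h)=S^2(h\o)(S(h\t)h\th)$ and uses \emph{anticomultiplicativity} (Proposition~\ref{antipode}(3)) to recognise $S^2(h\o)\tens S(h\t)$ as the coproduct legs $S(a\t)\tens a\o$ of the element $a=S(h\o)$, then applies (co)commutativity and the quasigroup axioms to that element $a$, all in one chain of equalities ending in $h$; you instead use \emph{antimultiplicativity} (Proposition~\ref{antipode}(2)) to manufacture the auxiliary identity and then apply the axioms directly to the coproduct legs of $h$. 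So your argument consumes part (2) of Proposition~\ref{antipode} where the paper's consumes part (3); both rest on part (1), both are at bottom two-way evaluations of a single rigidly bracketed triple product (your framing merely makes this explicit), and both correctly bypass the convolution-inverse argument, which indeed fails here because convolution on $\Hom(H,H)$ is non-associative when $m$ is. A pleasant byproduct of your variant: it is essentially the paper's own proof of the dual statement for Hopf coquasigroups (Proposition~5.3) read backwards through the duality --- in the commutative case your collapse $S^2(h\o)S(h\t)=S(S(h\o)h\t)=\eps(h)1$ mirrors the step $S(a\t\o S(a\o))a\t\t=S(S(a\o)a\t\o)a\t\t=S(1)a$ there --- so your route also makes the self-duality of the two $S^2=\id$ results transparent.
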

\proof
Let $h\in H$. Since $S$ is anticomultiplicative, we have
\[ S^2(h) = S^2(h\o)(S(h\t)h\th)=S(S(h\o)\t)(S(h\o)\o h\t)\]
If $H$ is commutative, that is $hg=gh$ for all $h, g\in H$, we find
\[ =(h\t S(h\o)\o)S(S(h\o)\t) = h\t\eps(S(h\o)) = h\]
using $S(h\o)(h\t g)=\eps(h)g$ for all $h,g\in H$. If $H$ is cocommutative, that is $h\o\tens h\t=h\t \tens h\o$ for all $h\in H$, we find
\[ = S(S(h\o)\o)(S(h\o)\t h\t) = \eps(S(h\o))h\t = h\]
using $(gh\o)S(h\t)=g\eps(h)$ for all $h,g\in H$. This holds for any $h\in H$, hence $S^2=\id$, as required.
\endproof

We note that if $H$ is a Moufang Hopf quasigroup with invertible antipode, then as in the classical case, the Moufang identity is equivalent to two other versions.

\begin{lemma}
Let $H$ be a Hopf quasigroup such that $S^{-1}$ exists, then the following identities are equivalent for all $h,g,f\in H$,
\begin{enumerate}
\item $h\o(g(h\t f))=((h\o g)h\t)f$
\item $((hg\o)f)g\t=h(g\o(fg\t))$
\item $(h\o g)(fh\t)=(h\o(gf))h\t$
\end{enumerate}
\label{Moufang}
\end{lemma}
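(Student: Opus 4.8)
The plan is to linearise the elementary proof of the classical Moufang equivalence given above, using Proposition~\ref{antipode} as the dictionary: the group inverse $u^{-1}$ is replaced by the antipode $S$; the cancellation laws $u^{-1}(uv)=v$ and $(vu)u^{-1}=v$ become the four antipode/counit axioms of a Hopf quasigroup; the rule $(uv)^{-1}=v^{-1}u^{-1}$ becomes the antimultiplicativity $S(hg)=(Sg)(Sh)$; and the behaviour of the inverse on the reversed coproduct becomes the anticomultiplicativity $\Delta(Sh)=S(h\t)\tens S(h\o)$. The hypothesis that $S^{-1}$ exists is precisely what allows the relabellings and variable substitutions of the classical argument to be run, so that each derived identity holds for \emph{all} elements of $H$ rather than only for those in the image of $S$.

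First I would establish the equivalence of (1) and (2), which classically follows by inverting both sides. In the Hopf setting I would apply $S$ to both sides of (1). Using antimultiplicativity repeatedly, the left-hand side becomes $((S(f)S(h\t))S(g))S(h\o)$ and the right-hand side becomes $S(f)(S(h\t)(S(g)S(h\o)))$. I would then set $k=S(h)$, so that anticomultiplicativity gives $k\o=S(h\t)$ and $k\t=S(h\o)$, and relabel the free variables by $S(f)\mapsto h$, $k\mapsto g$, $S(g)\mapsto f$. This relabelling covers all of $H$ precisely because $S$ is surjective, i.e. $S^{-1}$ exists, and it yields exactly identity (2). Applying $S$ to (2) and relabelling symmetrically returns (1), so (1) $\Leftrightarrow$ (2).

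Next I would prove (1) $\Leftrightarrow$ (3) by mimicking the substitution chain of the classical proof. Starting from (1), which holds for all $f$, I would replace $f$ by $S(h\th)f$ using the coassociative triple coproduct $h\o\tens h\t\tens h\th$. On the left the adjacent pair collapses by the axiom $h\t(S(h\th)f)=\eps(h\t)f$, and the counit then recombines $h\o$, giving
\[ h(gf)=((h\o g)h\t)(S(h\th)f), \]
the exact analogue of $u(vw)=((uv)u)(u^{-1}w)$. I would then isolate $(h\o g)h\t$ by cancelling the factor $S(h\th)f$ on the right via the remaining antipode axioms, and carry out the further manipulations corresponding to the substitutions $v\mapsto vw$, $w\mapsto w^{-1}$ in the classical argument, each time introducing or contracting a coproduct leg and using $S^{-1}$ to reverse the step. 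This delivers identity (3); reversing the chain gives (3) $\Rightarrow$ (1).

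The main obstacle is the Sweedler bookkeeping rather than any conceptual difficulty: one must keep careful track of how many copies of the repeated variable occur and choose the correct coproduct leg when substituting $f\mapsto S(h_{?})f$, so that the antipode axioms apply to genuinely adjacent components and the counit contractions recombine into a single $h$. A secondary point is the systematic use of the hypothesis that $S^{-1}$ exists, which is exactly what makes the relabellings surjective and the reversed substitutions valid, turning the one-directional derivations into genuine equivalences.
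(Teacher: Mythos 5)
Your proof of (1) $\Leftrightarrow$ (2) is complete, correct, and is exactly the paper's argument: apply $S$ to (1), expand by antimultiplicativity, recognise $S(h\t)\tens S(h\o)$ as $\Delta(S(h))$, and use surjectivity of $S$ to relabel. Your first move in (1) $\Rightarrow$ (3), replacing $f$ by $S(h\th)f$ and contracting to get $h(gf)=((h\o g)h\t)(S(h\th)f)$, is also precisely the paper's first step.

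The gap is the step you announce next: ``isolate $(h\o g)h\t$ by cancelling the factor $S(h\th)f$ on the right via the remaining antipode axioms.'' That cancellation is not available in a Hopf quasigroup. Classically this step reads $(uv)u=(u(vw))(u^{-1}w)^{-1}=(u(vw))(w^{-1}u)$ and silently uses $(u^{-1})^{-1}=u$, whose Hopf analogue is $S^2=\id$ --- true only for commutative or cocommutative $H$ (Proposition~4.3) and not assumed here. Concretely, the right-cancellation axioms require the two factors after $x$ in $(xa)b$ to be $a\tens b=k\o\tens S(k\t)$ or $S(k\o)\tens k\t$ for a \emph{single} element $k$; but anticomultiplicativity means that in any coproduct of the form $\Delta(S(h'')f'')=S(h''\t)f''\o\tens S(h''\o)f''\t$ the \emph{later} $h$-leg sits with the \emph{earlier} $f$-leg, whereas after splitting legs your expression carries $S(h\th)f\o$, i.e.\ the earlier $h$-leg paired with the earlier $f$-leg. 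Coassociativity can relabel legs but never reverse their order, so no admissible $k$ exists; every attempt to force a match produces $S^2$ or tensors that are not coproducts. The paper evades this by performing your two remaining moves in the opposite order: it first substitutes $g\mapsto gf\o$, $f\mapsto S(f\t)$ (the analogue of $v\mapsto vw$, $w\mapsto w^{-1}$), so that antimultiplicativity fuses the junk into a single antipode $S(h\th)S(f\t)=S(f\t h\th)$, whose argument $f\t h\th$ has \emph{unreversed} coproduct legs because $\Delta$ is an algebra homomorphism; only then does it split legs once more, multiply on the right by $f\t h\t$, and cancel using $(xS(k\o))k\t=x\eps(k)$. A smaller inaccuracy: (3) $\Rightarrow$ (1) is not obtained by ``reversing the chain''; the paper runs a separate derivation, and that is the only place $S^{-1}$ genuinely enters (replacing $f$ by $S^{-1}(h\t f)$ at the end), while the direction (1) $\Rightarrow$ (3) needs no $S^{-1}$ at all.
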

\proof
Suppose (1) holds. Applying $S$ to both sides and using that $S$ is anticomultiplicative, we find
\[ ((S(f)S(h)\o)S(g))S(h)\t=S(f)(S(h)\o(S(g)S(h)\t)) \]
for all $h,g,f\in H$, which is clearly equivalent to (2). Similarly, (2) implies (1).

Assume (1) holds, then
\[ u(vw)=u\o\o(v(u\o\t(S(u\t)w)))=((u\o\o v)u\o\t)(S(u\t)w)\]
Therefore, by replacing $v$ with $vw\o$, and $w$ with $S(w\t)$, we obtain
\[ uv\eps(w) = u((vw\o)S(w\t)) = ((u\o\o(vw\o))u\o\t)S(w\t u\t)\]
Now replace $u$ by $u\o$, $w$ by $w\o$, and multiply on the right by $w\t u\t$ to obtain
\[ \eps(w\o)(u\o v)(w\t u\t)= (((u\o\o\o(vw\o\o))u\o\o\t)S(w\o\t u\o\t))(w\t u\t) \]
The LHS equals $(u\o v)(wu\t)$, so we consider the RHS. Using coassociativity and that $\Delta$ is an algebra homomorphism, this equals
\[ (((u\o\o(vw\o))u\o\t)S((w\t u\t)\o))(w\t u\t)\t =(u\o(vw))u\t \]
So we have $(u\o v)(wu\t)=(u\o(vw))u\t$, which is identity (3).

Assume (3) holds, then
\[ ((h\o\o g)(f\o h\o\t))S(f\t h\t)=((h\o\o(gf\o))h\o\t)S(f\t h\t) \]
Using that $\Delta$ is an algebra homomorphism, and defining properties of a quasigroup, the LHS equals $hg\eps(f)$, so we obtain
\[ hg\eps(f)=((h\o\o(gf\o))h\o\t)(S(h\t)S(f\t)) \]
Therefore,
\[ h(gS(f\o))\eps(f\t) = ((h\o\o((gS(f\o))f\t\o))h\o\t)(S(h\t)S(f\t\t)) \]
which simplifies to
\[ h(gS(f))= ((h\o\o g)h\o\t)(S(h\t)S(f)) \]
Replacing $f$ by $S^{-1}(h\t f)$, we obtain
\[ h\o(g(h\t f)) = ((h\o\o\o g)h\o\o\t)(S(h\o\t)(h\t f))=((h\o g)h\t)f \]
which is identity (1).
\endproof

\begin{lemma}
Let $H$ be a cocommutative flexible Hopf quasigroup, then
\[ h\o(gS(h\t))=(h\o g)S(h\t) \]
for all $h,g\in H$.
\end{lemma}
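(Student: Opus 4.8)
The plan is to linearise the classical argument used for flexible quasigroups, where the identity $u(vu^{-1})=(uv)u^{-1}$ was obtained by right-multiplying both sides by $u$, using flexibility to shift the outer bracket and the inverse-property identities to collapse the result to $uv$, and then cancelling the common factor $u$. In the linear setting the move ``multiply by $u$ and then cancel'' is realised as ``multiply by one coproduct leg of $h$ and then reconstruct through a further leg using the antipode axioms'', so I would work throughout with the iterated coproduct $h\o\tens h\t\tens h\th\tens h_{(4)}$ and the defining identities of Definition~4.1.

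Concretely, I would introduce the single auxiliary expression
\[ P=\big((h\o(gS(h\t)))h\th\big)S(h_{(4)}) \]
and evaluate it in two ways. For the first evaluation I apply the antipode identity $m(m\tens\id)(\id\tens\id\tens S)(\id\tens\Delta)=\id\tens\eps$, i.e. $(xh\o)S(h\t)=x\eps(h)$, with $x=h\o(gS(h\t))$ acting on the legs $h\th,h_{(4)}$. This replaces those two legs by a counit, and by coassociativity the surviving legs reassemble into $\Delta h$, giving $P=h\o(gS(h\t))$, the left-hand side of the claim.

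For the second evaluation I first apply flexibility to the outer bracket, taking $h\o$ and $h\th$ as the flexible pair and $gS(h\t)$ as the central factor, to rewrite the bracket as $h\o((gS(h\t))h\th)$. The inner factor $(gS(h\t))h\th$ is then simplified by the antipode identity $(gS(h\o))h\t=g\eps(h)$ on the adjacent legs $h\t,h\th$, which collapses them to a counit; again by coassociativity the remaining legs $h\o$ and $h_{(4)}$ recombine into $\Delta h$, so that $P=(h\o g)S(h\t)$, the right-hand side of the claim. Comparing the two evaluations yields $h\o(gS(h\t))=(h\o g)S(h\t)$, as required.

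The step needing the most care --- and the only place the cocommutativity hypothesis is used --- is the application of flexibility to the non-adjacent legs $h\o$ and $h\th$, with $h\t$ buried inside the central factor. Flexibility in Definition~4.1 is a two-leg identity, so to apply it to a pair of legs separated by a spectator I would first invoke cocommutativity together with coassociativity to treat the iterated coproduct as symmetric in its tensor factors, permuting the legs so that the intended flexible pair is literally the two outputs of a single $\Delta$ while the other legs are carried along unchanged. I expect the main obstacle to be essentially bookkeeping: keeping careful track of which legs each antipode identity contracts, and verifying that the legs left over genuinely recombine into $\Delta h$ on both sides, so that the two evaluations of $P$ are exactly the two sides of the lemma.
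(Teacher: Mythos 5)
Your proof is correct and is essentially the paper's own argument: the paper likewise uses cocommutativity to apply flexibility across the outer legs of the iterated coproduct and then collapses adjacent legs with the two antipode identities, showing that both $(h\o\o(gS(h\o\t)))h\t$ and $((h\o\o g)S(h\o\t))h\t$ equal $hg$ before cancelling the trailing leg. Your only deviation is organizational: by appending $S(h_{(4)})$ at the outset and evaluating the single four-leg expression $P$ in two ways, you make explicit the final cancellation step that the paper leaves implicit in its concluding ``therefore''.
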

\proof
By cocommutativity and flexibility we have
\[(h\o\o(gS(h\o\t)))h\t = h\o((gS(h\t\o))h\t\t)\] 
Using the Hopf quasigroup identity on $h\t$, this equals $hg$. We also have
\[ ((h\o\o g)S(h\o\t))h\t = ((h\o g)S(h\t\o))h\t\t = hg\]
So for all $h,g\in H$,
\[ (h\o\o(gS(h\o\t)))h\t =((h\o\o g)S(h\o\t))h\t\]
therefore $h\o(gS(h\t))=(h\o g)S(h\t)$ as required.
\endproof

So we have a notion of the left (and similarly right) adjoint action of a Hopf quasigroup $H$ when it is cocommutative and flexible.

In any Hopf quasigroup we define the associator by
 \[(hg)f=\varphi(h\o,g\o,f\o)(h\t (g\t f\t))\]

\begin{proposition} Let $H$ be a Hopf quasigroup with associator $\varphi$. Then, for all $h,g\in H$,
\begin{enumerate}
\item $\varphi$ always exists as it can be expressed as
\[ \varphi(h,g,f)=((h\o g\o) f\o)S(h\t (g\t f\t)),\quad\forall h,g,f\]
\item $\varphi(1,h,g)=\varphi(h,1,g)=\varphi(h,g,1)=\eps(h)\eps(g).1$, 
\item 
$\varphi(h\o,S(h\t),g)=\varphi(S(h\o),h\t,g)=\eps(h)\eps(g).1$\\
$\varphi(h,g\o,S(g\t))=\varphi(h,S(g\o),g\t)=\eps(h)\eps(g).1$\\
$\varphi(h\o g\o,S(g\t),S(h\t))=\varphi(S(h\o)S(g\o),g\t,h\t)=\eps(h)\eps(g).1$\\
$\varphi(S(h\o),S(g\o),g\t h\t)=\varphi(h\o,g\o,S(g\t)S(h\t))=\eps(h)\eps(g).1$\\
$\varphi(S(h\o),h\t S(g\o),g\t)=\varphi(h\o,S(h\t)g\o,S(g\t))=\eps(h)\eps(g).1$
\end{enumerate}
\end{proposition}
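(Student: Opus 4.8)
The plan is to establish part (1) first, since it produces the explicit formula for $\varphi$ on which parts (2) and (3) rest. I would verify directly that the proposed expression $\varphi(h,g,f)=((h\o g\o)f\o)S(h\t(g\t f\t))$ satisfies the implicit defining relation $(hg)f=\varphi(h\o,g\o,f\o)(h\t (g\t f\t))$. Substituting the candidate into the right-hand side, expanding $\varphi(h\o,g\o,f\o)$ and using coassociativity, the expression becomes $\bigl(((h\o g\o)f\o)\,S(b\o)\bigr)b\t$ with $b=h\t(g\t f\t)$, where I have used that $\Delta$ is an algebra homomorphism to recognise the factor $h\t(g\t f\t)$ and the outer factor as $b\o$ and $b\t$. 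The Hopf quasigroup identity $(xS(y\o))y\t=x\eps(y)$ then collapses this to $((h\o g\o)f\o)\eps(b)$, and since $\eps$ is an algebra homomorphism this is $((h\o g\o)f\o)\eps(h\t)\eps(g\t)\eps(f\t)=(hg)f$ by the counit axiom. This simultaneously establishes existence and the stated formula.

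For part (2) I would feed a unit into each of the three slots of the formula. Because $H$ is unital with $\Delta 1=1\tens 1$ and $\eps 1=1$, each of $\varphi(1,h,g)$, $\varphi(h,1,g)$, $\varphi(h,g,1)$ collapses to the single expression $(h\o g\o)S(h\t g\t)$. Writing $w=hg$ and using that $\Delta$ is an algebra homomorphism, this is $w\o S(w\t)$, which equals $\eps(w).1=\eps(h)\eps(g).1$ by Proposition~\ref{antipode}(1) together with $\eps$ being multiplicative.

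For part (3) the strategy mirrors the classical counterpart (the analogous vanishing of the multiplicative associator on special arguments such as $\varphi(u,u^{-1},v)=e$): each identity is the linearisation of a classical cancellation, with the roles of ``cancellation in the quasigroup'' and ``the inverse of $uv$'' now played by the four defining identities of a Hopf quasigroup and by the anti(co)multiplicativity of $S$ from Proposition~\ref{antipode}(2),(3). Concretely I would insert the entangled arguments into the formula of part (1); for instance $\varphi(h\o,S(h\t),g)$ becomes $((h\o S(h_{(4)}))g\o)S(h\t(S(h\th)g\t))$ once anticomultiplicativity is used to compute $\Delta(S(h\t))$. I would then expand the outer antipode by antimultiplicativity and drive the expression down to $\eps(h)\eps(g).1$ using the axioms $S(h\o)(h\t g)=\eps(h)g$, $h\o(S(h\t)g)=\eps(h)g$ and their right-handed partners $(xS(y\o))y\t=x\eps(y)$, $(xy\o)S(y\t)=x\eps(y)$. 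The two identities within each displayed line are mirror images, related by applying $S$ and reversing order, and the five lines correspond exactly to the classical cases built from $\varphi(u,u^{-1},v)$, $\varphi(u,v,v^{-1})$, $\varphi(uv,v^{-1},u^{-1})$, $\varphi(v^{-1},u^{-1},uv)$ and $\varphi(u^{-1},uv,v^{-1})$, so one representative computation together with these symmetries covers the list.

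The hard part will be the bookkeeping in part (3). Unlike the classical setting, where $uu^{-1}$ and $u^{-1}v$ collapse independently, the iterated coproduct of $h$ is distributed across both factors of the formula (the legs $h\o,h\t,h\th,h_{(4)}$ above are split between the left factor and the argument of $S$), so no single factor cancels on its own and $S^2$ contributions appear transiently. The genuine content is therefore to apply the non-associative cancellation axioms in precisely the bracketed positions they dictate, without rebracketing, and to confirm that once the two factors are multiplied out these $S^2$ terms cancel, leaving $\eps(h)\eps(g).1$. Keeping every product in the exact form demanded by the axioms, rather than attempting to reassociate, is what makes this manageable.
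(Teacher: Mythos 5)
Your proposal is correct and is essentially the paper's own proof: part (1) is verified by exactly the same substitution-and-collapse computation (recognising $b=h\t(g\t f\t)$ and applying $(xS(b\o))b\t=x\eps(b)$), part (2) by the same reduction to $(hg)\o S((hg)\t)=\eps(h)\eps(g)1$, and part (3) by inserting the entangled arguments into the part-(1) formula and invoking anticomultiplicativity of $S$, coassociativity, the Hopf quasigroup axioms and $m(S\tens\id)\Delta=1\eps$. The only tactical difference is in part (3): rather than expanding the outer antipode by antimultiplicativity and then cancelling the resulting transient $S^2$ terms, the paper collapses the argument of that antipode in place — e.g.\ in $((h\o S(h_{(4)}))g\o)S(h\t(S(h\th)g\t))$ the argument $h\t(S(h\th)g\t)$ already has the exact pattern of the axiom $h\o(S(h\t)g)=\eps(h)g$ and reduces to $\eps(h\t)\eps(h\th)g\t$ — which avoids the $S^2$ bookkeeping you identify as the hard part.
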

\proof
Part (1) uses that comultiplication is a coassociative algebra homomorphism, we have
\begin{eqnarray*}
\lefteqn{\varphi(h\o,g\o,f\o)(h\t (g\t f\t))}\\
	&	=	&	((h\o g\o) f\o)S(h\t\o (g\t\o f\t\o)))(h\t\t (g\t\t f\t\t))\\
	&	=	& ((h\o g\o) f\o)S((h\t (g\t f\t))\o))(h\t (g\t f\t))\t\\
	&	=	& ((h\o g\o) f\o)\eps(h\t (g\t f\t))\\
	&	=	& (hg)f
\end{eqnarray*}
Part (2) is straightforward, for example
\[ \varphi(1,h,g)=(h\o g\o)S(h\t g\t)=(hg)\o S((hg)\t)=\eps(h)\eps(g).1\]
Part (3) requires the identities from the definition of a Hopf quasigroup, and that $S$ is anticomultiplicative. For example,
\begin{eqnarray*}
\varphi(h\o,S(h\t),g)	&	=	& ((h\o\o S(h\t)\o) g\o)S(h\o\t (S(h\t)\t g\t))\\
	&	=	&	((h\o\o S(h\t\t))g\o)S(h\o\t (S(h\t\o) g\t))\\
	&	=	&	((h\o\o S(h\t))g\o))S(h\o\t\o (S(h\o\t\t)g\t))\\
	&	=	& ((h\o S(h\t)) g\o)S(g\t)\\
	&	=	& \eps(h)g\o S(g\t)\\
	&	=	& \eps(h)\eps(g).1
\end{eqnarray*}
The rest are similar.
\endproof

\begin{proposition} If $\Gcal$ is a quasigroup then $H=k\Gcal$ is a Hopf quasigroup with linear extension of the product and $\Delta u=u\tens u$, $\eps u = 1$, $Su=u^{-1}$ on the basis elements. 
\end{proposition}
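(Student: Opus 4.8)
The plan is to verify each clause of Definition~4.1 in turn, exploiting that every piece of structure is defined on the basis $\Gcal$ and extended $k$-linearly. Since all the axioms are identities between $k$-linear maps, it suffices to check them on basis elements $u,v\in\Gcal$. The key simplification is that $\Delta$ is group-like, $\Delta u=u\tens u$, so in Sweedler notation $u\o=u\t=u$ on basis elements and each Hopf-quasigroup axiom collapses to a plain (nonlinear) product identity in $\Gcal$.

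First I would record the bialgebra-type data. The product is the linear extension of the (nonassociative) quasigroup product, with unit $e\in\Gcal$, so $H$ is a unital, possibly-nonassociative algebra. Coassociativity and counitality are immediate: $(\Delta\tens\id)\Delta u=u\tens u\tens u=(\id\tens\Delta)\Delta u$ and $(\eps\tens\id)\Delta u=\eps(u)u=u$ since $\eps u=1$. That $\Delta$ and $\eps$ are algebra homomorphisms follows because $u\mapsto u\tens u$ and $u\mapsto 1$ are multiplicative on the group-like generators: $\Delta(uv)=uv\tens uv=(u\tens u)(v\tens v)=(\Delta u)(\Delta v)$ and $\eps(uv)=1=\eps(u)\eps(v)$, with $\Delta e=e\tens e$, $\eps e=1$.

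It then remains to verify the four antipode identities. Applying the first map $m(\id\tens m)(S\tens\id\tens\id)(\Delta\tens\id)$ to $u\tens v$ yields $S(u)(uv)=u^{-1}(uv)=v=\eps(u)v$ by the left quasigroup identity; the variant with $\id\tens S\tens\id$ yields $u(u^{-1}v)$, which equals $v$ after reading it as the left identity for $u^{-1}$ via $(u^{-1})^{-1}=u$ from Section~3. Dually, $m(m\tens\id)(\id\tens\id\tens S)(\id\tens\Delta)$ applied to $v\tens u$ gives $(vu)u^{-1}=v=v\,\eps(u)$ by the right quasigroup identity, while the variant with $\id\tens S\tens\id$ gives $(vu^{-1})u=v$, again by $(u^{-1})^{-1}=u$. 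Each of the four maps therefore agrees with $\eps\tens\id$ (resp. $\id\tens\eps$) on basis elements, hence everywhere by linearity.

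There is no serious obstacle here; the only point needing care is the bookkeeping that matches each of the four antipode axioms to the correct one of the two quasigroup inverse identities. Two of the four follow directly from $u^{-1}(uv)=v$ and $(vu)u^{-1}=v$, whereas the other two present the product in the opposite order, namely $u(u^{-1}v)$ and $(vu^{-1})u$, and so require the consequence $(u^{-1})^{-1}=u$ established in Section~3 to reduce them to the same two identities. Finally, one observes that the flexible, alternative and Moufang conditions transfer verbatim: on group-like elements each Hopf-quasigroup version reduces to the corresponding classical identity in $\Gcal$, so $k\Gcal$ inherits whichever of these properties $\Gcal$ enjoys.
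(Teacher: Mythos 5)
Your proof is correct and follows essentially the same route as the paper's: check everything on the group-like basis elements, where each Hopf quasigroup axiom collapses to a quasigroup identity in $\Gcal$. The paper verifies only $S(u\o)(u\t v)=u^{-1}(uv)=v=\eps(u)v$ and dismisses the rest with ``similarly,'' so your explicit observation that the two axioms producing $u(u^{-1}v)$ and $(vu^{-1})u$ need the Section~3 fact $(u^{-1})^{-1}=u$ is a welcome clarification of exactly what that ``similarly'' hides.
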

\proof 
We check on the basis elements. The comultiplication is clearly coassociative and an algebra homomorphism. Since $\Gcal$ is a quasigroup,
\[ S(u\o)(u\t v)=u^{-1} (uv)=v = \eps(u) v \]
for all $u,v\in \Gcal$. Similarly the other identities hold for $k\Gcal$.

\endproof

Hence we have Hopf quasigroups associated to all the examples of the previous section. Conversely,  if $H$ is a Hopf quasigroup then clearly its set
\[ G(H)=\{u\in H\ |\ \Delta u=u\tens u\}\]
of grouplike elements is a quasigroup in our sense with inverse property, and Moufang if $H$ is Moufang. 

The theory clearly also includes the notion of an enveloping Hopf quasigroup $U(L)$ associated to a Mal'tsev algebra $L$ when the ground field has characteristic not $2,3$. We will recall the axioms of the required Mal'tsev bracket $[\ ,\ ]$ on $L$ where we use it in Section 6. Suffice it to say for the moment that \cite{PS} define a not necessarily associative algebra $U(L)$  as a quotient of the free nonassociative algebra on $L$ by an ideal imposing relations $xy-yx=[x,y]$ and $(x,h,g)+(h,x,g)=0$, $(h,x,g)+(h,g,x)=0$ for all $x,y\in L$ and $h,g$ in the free algebra. (Here $(a,b,c)=(ab)c-a(bc)$ is the usual additive associator on an algebra). These authors also provided a  `diagonal map'  $\Delta:U(L)\to U(L)\tens U(L)$ defined by $\Delta x=x\tens 1+1\tens x$ for all $x\in L$ extended to $U(L)$ as an algebra homomorphism. 

\begin{proposition} For $k$ not of characteristic 2,3 and $U(L)$ as above, there exists  maps $S:U(L)\to U(L)$, $\eps:U(L)\to k$ defined by $Sx=-x$ and $\eps x=0$ extended  as an antialgebra homomorphism and algebra homomorphism respectively and making $U(L)$ into a Hopf quasigroup.
\end{proposition}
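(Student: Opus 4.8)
The plan is to verify the structure piece by piece, leaving the four antipode axioms for the end. For the counit I would observe that $\eps$ is by construction an algebra map into the commutative associative algebra $k$, so it kills every additive associator $(a,b,c)=(ab)c-a(bc)$, and since $[x,y]\in L$ gives $\eps([x,y])=0=\eps(xy)-\eps(yx)$ it kills all the defining relations of $U(L)$; hence it descends. For the antipode I would first define $S$ on the free nonassociative algebra as the unique linear antihomomorphism with $Sx=-x$ on $L$, and then check that it preserves the defining ideal. The engine here is the identity $S((a,b,c))=-(S(c),S(b),S(a))$ coming from antimultiplicativity, together with $S(xy-yx-[x,y])=yx-xy-[y,x]$ using anticommutativity $[x,y]=-[y,x]$. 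Trilinearity of the associator and $Sx=-x$ then carry a relation of the first alternating type $(x,h,g)+(h,x,g)$ to one of the second type evaluated on $S(h),S(g)$, and conversely; since $S$ is antimultiplicative and the ideal is two-sided, $S$ maps it into itself and so descends to $U(L)$.

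The coalgebra axioms I would dispose of cheaply, using that an algebra homomorphism out of a (possibly nonassociative) algebra is determined by its values on a generating set. As $L$ generates $U(L)$ and $\Delta$ is an algebra map with each $x\in L$ primitive, the two algebra maps $(\Delta\tens\id)\Delta$ and $(\id\tens\Delta)\Delta$ agree on $L$, hence everywhere, giving coassociativity; the counit axiom and cocommutativity of $U(L)$ follow in the same way. The principle also yields $S^2=\id$ at once, since $S^2$ is an algebra endomorphism fixing $L$; in particular $S$ is invertible with $S^{-1}=S$.

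The substance is the four identities, which I would write as $S(h\o)(h\t b)=\eps(h)b$, $h\o(S(h\t)b)=\eps(h)b$, $(bh\o)S(h\t)=\eps(h)b$ and $(bS(h\o))h\t=\eps(h)b$. Because $U(L)$ is cocommutative with $S^2=\id$, applying $S$ to the two left-handed identities and using antimultiplicativity and cocommutativity converts them into the two right-handed ones, so it suffices to prove the two left-handed ones. I would let $P$ be the subspace of those $h$ for which both hold for all $b$, note $1\in P$ and $L\subseteq P$ by the one-line cancellations coming from $Sx=-x$ on primitives, and then show $P$ is stable under left multiplication by generators: for $x\in L$ and $h\in P$ one has $\Delta(xh)=xh\o\tens h\t+h\o\tens xh\t$, and after expanding $S((xh)\o)((xh)\t b)$ the single moving generator $x$ occupies exactly one slot of every associator that appears, so the linearized alternative laws $(x,h,g)=-(h,x,g)$ and $(h,x,g)=-(h,g,x)$ apply directly; reassociating everything into the form $\sum S(h\o)(h\t c)$ (respectively $\sum h\o(S(h\t)c)$ for the second identity) and invoking $h\in P$ twice, with $c=b$ and $c=bx$, collapses the expression to $\eps(xh)b=0$.

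It then remains to deduce $P=U(L)$. Since $P$ is a subspace containing $1$ and closed under left multiplication by $L$, it contains every right-nested monomial $x_1(x_2(\cdots(x_{k-1}x_k)))$, and the point is that, modulo the relations and using $\mathrm{char}\,k\ne2,3$, these span $U(L)$: for instance the commutator relation together with $(x_1,x_2,X)=-(x_2,x_1,X)$ rewrites a balanced product $(x_1x_2)(X)$ as a combination of right-nested ones, and the general case is an induction on degree and on the bracketing of the left factor. I expect this straightening step to be the main obstacle, since it is where the nonassociativity must be controlled globally rather than by the local, one-generator manipulations used above; it is, however, exactly the Poincar\'e--Birkhoff--Witt normal form furnished by the construction of \cite{PS}, on which I would rely to finish.
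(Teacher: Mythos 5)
Your proof is correct, but it takes a genuinely different route through the key induction. The paper proves the Hopf quasigroup identities on products of $n+1$ generators by a symmetrization trick: it verifies the identity on $hx+xh$ directly, and then extracts $hx$ from $hx+xh=2hx+[x,h]$ using a separately proven lemma (via $[x,hy]=[x,h]y+h[x,y]+3(x,y,h)$ and the expression of $3(x,y,h)$ in nested brackets, which is where char $\ne 2,3$ enters) that $[x,h]$ drops filtration degree. You avoid both the symmetrization and the commutator lemma: your claim that $P$ is stable under left multiplication by generators closes on the nose --- writing $S((xh)\o)((xh)\t b)=-(S(h\o),x,h\t b)-S(h\o)(h\t,x,b)$, moving $x$ to the outer slot of each associator by the defining relations, the troublesome term $S(h\o)((h\t b)x)$ cancels between the two, and the two surviving pieces are evaluated by $h\in P$ at $c=b$ and $c=bx$, exactly as you say (I checked this, and the mirror computation for the second left-handed identity). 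Your reduction of the two right-handed identities to the left-handed ones via $S$, $S^2=\id$ and cocommutativity is also a genuine economy over the paper's ``similarly for the other identities'', which tacitly runs the induction three more times. The trade-off comes at the end: you must know that right-nested monomials span $U(L)$, which you outsource to the PBW theorem of \cite{PS}. The paper instead does its degree bookkeeping by hand, although its own closing assertion that the identity then holds on sums of products of $\le n+1$ elements conceals a straightening step of precisely the kind you make explicit (a product of $n+1$ generators need not have the form $hx$), so both arguments ultimately rest on a spanning statement of PBW type --- yours names it, the paper's leaves it implicit. Net effect: your computations are characteristic-free and shorter, with all dependence on char $\ne 2,3$ concentrated in the cited normal-form theorem, while the paper's argument is more self-contained but longer and less transparent about where that input enters.
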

\proof 
In view of Proposition~4.2 we know that if $S$ exists then it will be antimultiplicative and hence as stated. One has to verify that $S$ defined in this way is well-defined, which is clear as $S(x,h,g)=(Sg,Sh,x)$, $S(h,x,g)=(Sg,x,Sh)$ in the free non-associative algebra. It is easy to see that $S$ then obeys the properties required in Definition~4.1 in the generators. That it does so in general is easily proven by induction. Thus assume that the first identity in Definition~4.1 (say) holds on sums of products of $\le n$ elements of $L$. Now
suppose $h\in U(L)$ can be expressed as a product of $n$ such elements and let $x\in L$. Then using the (anti)-multiplicative properties of $\Delta, S$ and their form on $x$, and the definition and properties in $U(L)$ of the additive associator, and our inductive assumption, we have
\begin{eqnarray*}
\lefteqn{S((hx)\o)((hx)\t g)+S((xh)\o)((xh)\t g)}\\
	&	=	& S(h\o x)(h\t g) + S(h\o)((h\t x)g) + S(xh\o)(h\t g) + S(h\o)((xh\t)g)\\
	&	=	& -(xS(h\o))(h\t g) + S(h\o)((h\t x)g) - (S(h\o)x)(h\t g) + S(h\o)((xh\t)g)\\
	&	=	& -(x,S(h\o),h\t g)-x(S(h\o)(h\t g)) + S(h\o)(h\t,x,g) + S(h\o)(h\t(xg))\\
	&		& \quad\quad - (S(h\o),x,h\t g) - S(h\o)(x(h\t g)) + S(h\o)(x,h\t,g) + S(h\o)(x(h\t g))\\
	&	=	& (S(h\o),x,h\t g)-\eps(h)xg - S(h\o)(x,h\t,g)+\eps(h)xg \\
	&		& \quad\quad - (S(h\o),x,h\t g) - S(h\o)(x(h\t g)) + S(h\o)(x,h\t,g) + S(h\o)(x(h\t g))\\
	&	=	& 0\\
	&	=	& \eps(hx+xh)g
\end{eqnarray*}
so that this identity holds also on $hx+xh= 2hx+[x,h]$ if it holds on $h$. Now $[x,h]$ can also be expressed as a sum of products of $\le n$ generators and hence the required identity already holds by assumption on this. Hence it holds on $hx$ and hence on sums of products of $\le n+1$ elements of $L$. Similarly for the other identities in Definition~4.1. We used in the proof here that $[x,h]$ can also be expressed in terms of sums of products $\le n$ elements if $h$ can. This assertion too can be easily proven by induction. Suppose $h$ can be expressed as a product of $n$ elements and suppose that the assertion is true. Now, 
\[ [x,hy] = [x,h]y + h[x,y] + 3(x,y,h) \]
hence, by our assumption, $[x,h]y$ and $h[x,y]$ can then be written as a sum of product of $\le n+1$. We also have 
\[ 3(x,y,h) = \frac{1}{2} [[h,x],y]-\frac{1}{2}[[h,y],x]-\frac{1}{2}[h,[x,y]] \]
and so, again by our assumption, $3(x,y,h)$ can be written as a sum of products of $n$ elements. Hence, $[x,hy]$ is also a sum of products of $\le n+1$ elements of $L$ whenever $hy$ is. We used two standard identities implicit in \cite{PS}.
\endproof

\begin{proposition}
$U(L)$ as above is Moufang.
\end{proposition}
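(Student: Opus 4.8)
The plan is to prove the Moufang identity $h\o(g(h\t f))=((h\o g)h\t)f$ of Definition~4.1 by induction on the length of $h$, i.e. the number of generators from $L$ needed to express it, reducing the statement to the case of a single generator by the same inductive method used for the antipode axioms in Proposition~4.8. Here only this one identity is at issue, since $\Delta$ is already an algebra homomorphism primitive on $L$ and $S,\eps$ are fixed; moreover by Lemma~4.4 the two equivalent forms of the Moufang identity then come for free, as $S^{-1}$ exists (indeed $S^2=\id$ here by Proposition~4.3, $U(L)$ being cocommutative).

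First I would dispose of the base cases. For $h=1$ both sides reduce to $gf$. For $h=x\in L$ we have $\Delta x=x\tens 1+1\tens x$, so the left side is $x(gf)+g(xf)$ and the right side is $(xg)f+(gx)f$, and their difference is
\[ (x(gf)-(xg)f)+(g(xf)-(gx)f)=-(x,g,f)-(g,x,f). \]
This vanishes by the very relation $(x,h,g)+(h,x,g)=0$ imposed in the definition of $U(L)$ (taking $h=g$ and $g=f$). Thus, on generators, the Moufang identity is precisely the linearized left-alternative law of a Mal'tsev envelope; this is the conceptual reason one expects $U(L)$ to be Moufang.

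For the inductive step I would take $h$ of length $n$ and a generator $x\in L$ and establish the identity for $hx$ using that $\Delta$ is multiplicative, so that $\Delta(hx)=h\o x\tens h\t+h\o\tens h\t x$. Expanding both sides then produces four terms in which $x$ sits inside various associators, and the strategy is to move $x$ outward using the two defining relations $(x,h,g)+(h,x,g)=0$ and $(h,x,g)+(h,g,x)=0$, which together say that the additive associator $(a,b,c)=(ab)c-a(bc)$ changes sign when an $L$-argument is transposed with an adjacent one, and then to reduce the remaining $h$-dependence via the inductive hypothesis. To pass from right-multiples by generators to an arbitrary element I would reduce to a normal form via the PBW-type basis of \cite{PS}, using also the fact established in the proof of Proposition~4.8 that $[x,h]$ needs no more generators than $h$, so that all reorderings stay within the inductive range.

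The main obstacle is the inductive step itself. Unlike the antipode axioms of Proposition~4.8, the Moufang identity is quadratic in $h$ (it appears through both $h\o$ and $h\t$), so it does not collapse under a single application of the inductive hypothesis; instead one must rerun, in the coalgebra-graded setting, the classical derivation of the Moufang laws from skew-symmetry of the associator in an alternative algebra, keeping the two legs of $\Delta h$ correctly paired through every reassociation. A route that may shorten this is to first prove the coproduct versions of the alternative and flexible laws, which reduce more transparently to the defining relations — for instance the left-alternative defect at $h=xy$ is just $(x,y,g)+(y,x,g)=0$ — and then attempt to deduce the Moufang identity from them together with Lemma~4.4.
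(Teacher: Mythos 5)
Your strategy coincides with the paper's: induction on the number of generators of $L$ needed to express $h$, with the identical base case (the difference $-(x,g,f)-(g,x,f)$ vanishing by the defining relations of $U(L)$) and the identical toolkit for the inductive step ($\Delta(hx)=h\o x\tens h\t+h\o\tens h\t x$, skew-symmetry of the associator in $L$-arguments, the inductive hypothesis, Lemma~4.4). The gap is that you stop exactly where the work begins: the inductive step is flagged as "the main obstacle" but never carried out, and it is the entire content of the paper's proof. Concretely, after expanding $(hx)\o(g((hx)\t f))$ and applying the inductive hypothesis twice (to $h\o((xg)(h\t f))$ and to $h\o(g(h\t(xf)))$), one is left having to prove the vanishing of six residual associator terms,
\[ (h\o,x,g(h\t f)) - ((h\o,x,g)h\t)f - h\o(x,g,h\t f) + h\o(g(h\t,x,f)) + (h\o g,h\t,x)f - ((h\o g)h\t,x,f), \]
and this cancellation is not a formal consequence of "moving $x$ outward": it requires a further sustained computation in which, crucially, the equivalent Moufang identity (3) of Lemma~4.4 is applied to $h$ itself in the final steps (legitimate because the coproduct legs of $h$ remain products of at most $n$ generators, $\Delta$ being multiplicative and generators primitive, so the inductive hypothesis propagates to them). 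Your proposal correctly anticipates that Lemma~4.4 enters and that the legs of $\Delta h$ must stay paired, but gives no argument that the pairing survives the reassociations — that is precisely the term-by-term verification the paper supplies and your proof lacks.

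Your suggested shortcut — first proving the coproduct versions of the flexible and alternative laws and then deducing Moufang from them via Lemma~4.4 — cannot rescue this. Lemma~4.4 only asserts the equivalence of the three Moufang identities with one another; it does not derive any of them from alternativity. Classically the implication "alternative $\Rightarrow$ Moufang" is false for loops, and no Hopf-quasigroup analogue of Artin's theorem is available here: the relations of $U(L)$ make the associator skew only when one argument lies in $L$, not in all arguments, so the classical alternative-algebra argument does not transplant. The induction with the explicit six-term cancellation is unavoidable, and as it stands your proposal does not contain it.
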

\proof 
Clearly, if $h\in L$ and $g,f\in U(L)$ then
\begin{eqnarray*}
h\o(g(h\t f))	&	=	& h(gf)+g(hf) = (hg)f-(h,g,f)+(gh)f-(g,h,f) \\
	&	=	& ((h\o g)h\t)f -(h,g,f)+(h,g,f) = ((h\o g)h\t)f 
\end{eqnarray*}
Now we prove by induction; suppose $h\in U(L)$ can be expressed as a product of $n$ elements of $L$, and that the Moufang identity with any $g,f\in U(L)$ holds for all such elements. Now, let $x\in L$ then,
\begin{eqnarray*}
(hx)\o(g((hx)\t f))	& =	& (h\o x)(g(h\t f)) + h\o(g((h\t x)f))\\
	&	=	& (h\o,x,g(h\t f)) + h\o(x(g(h\t f))) \\
	&		& \quad + h\o(g(h\t,x,f)) + h\o(g(h\t(xf))) \\
	&	=	& (h\o,x,g(h\t f)) + h\o((xg)(h\t f)) - h\o(x,g,h\t f)  \\
	&		& \quad + h\o(g(h\t,x,f)) + h\o(g(h\t(xf))) \\
	&	=	& (h\o,x,g(h\t f)) + ((h\o(xg))h\t)f - h\o(x,g,h\t f) \\
	&		& \quad + h\o(g(h\t,x,f)) + ((h\o g)h\t)(xf) \\
	&	=	& (h\o,x,g(h\t f)) + (((h\o x)g)h\t)f - ((h\o,x,g)h\t)f \\
	&		& \quad- h\o(x,g,h\t f) + h\o(g(h\t,x,f)) + (((h\o g)h\t)x)f\\
	&		& \quad - ((h\o g)h\t,x,f) \\
	&	=	& (h\o,x,g(h\t f)) + (((h\o x)g)h\t)f - ((h\o,x,g)h\t)f \\
	&		& \quad - h\o(x,g,h\t f) + h\o(g(h\t,x,f)) + (h\o g,h\t,x)f \\
	&		& \quad + ((h\o g)(h\t x))f - ((h\o g)h\t,x,f) \\
	&	=	& (((hx)\o g)(hx)\t)f +(h\o,x,g(h\t f)) - ((h\o,x,g)h\t)f \\
	&		& \quad - h\o(x,g,h\t f) + h\o(g(h\t,x,f)) + (h\o g,h\t,x)f \\
	&		& \quad - ((h\o g)h\t,x,f). \\
\end{eqnarray*}
We used that $\Delta$ is a homomorphism, the coproduct on $x$, the definition of the additive associator and its properties defining $U(L)$. It remains need to prove vanishing of
\begin{eqnarray*}
\lefteqn{(h\o,x,g(h\t f)) - ((h\o,x,g)h\t)f - h\o(x,g,h\t f)}\\
	&		& \quad\quad\quad\quad+ h\o(g(h\t,x,f)) + (h\o g,h\t,x)f - ((h\o g)h\t,x,f)
\end{eqnarray*}
To do this we again apply identities on the associator and use the Moufang identity, assumed to hold for $h$:
\begin{eqnarray*}
	&	=	& -(h\o,g(h\t f),x)-h\o(g,h\t f,x)+((h\o g)h\t,f,x)\\
	&		& \quad -((h\o,x,g)h\t)f+h\o(g(h\t,x,f))+(h\o g,h\t,x)f\\
	&	=	& -(h\o(g(h\t f)))x+h\o((g(h\t f))x)-h\o((g(h\t f))x) + h\o(g((h\t f)x))\\
	&		& \quad (((h\o g)h\t)f)x-((h\o g)h\t)(fx)-((h\o,x,g)h\t)f\\
	&		& \quad +h\o(g(h\t,x,f))+(h\o g,h\t,x)f\\
	&	=	&  -(h\o(g(h\t f)))x + h\o(g((h\t f)x))+(((h\o g)h\t)f)x-((h\o g)h\t)(fx)\\
	&		& \quad -((h\o,x,g)h\t)f+h\o(g(h\t,x,f))+(h\o g,h\t,x)f\\
	&	=	&  -(h\o(g(h\t f)))x + h\o(g((h\t f)x))+(h\o(g(h\t f)))x-((h\o g)h\t)(fx)\\
	&		& \quad -((h\o,x,g)h\t)f+h\o(g(h\t,x,f))+(h\o g,h\t,x)f\\
	&	=	& h\o(g((h\t f)x))-((h\o g)h\t)(fx)\\
	&		& \quad -((h\o,x,g)h\t)f+h\o(g(h\t,x,f))+(h\o g,h\t,x)f\\
	&	=	& h\o(g(h\t,f,x))+h\o(g(h\t(fx)))-((h\o g)h\t)(fx)\\
	&		& \quad -((h\o,x,g)h\t)f+h\o(g(h\t,x,f))+(h\o g,h\t,x)f\\	
	&	=	& h\o(g(h\t,f,x))+h\o(g(h\t(fx)))-h\o(g(h\t(fx)))\\
	&		& \quad -((h\o,x,g)h\t)f+h\o(g(h\t,x,f))+(h\o g,h\t,x)f\\
	&	=	& -h\o(g(h\t,x,f))-((h\o,x,g)h\t)f+h\o(g(h\t,x,f))+(h\o g,h\t,x)f\\
	&	=	& ((h\o,g,x)h\t)f-(h\o g,x,h\t)f\\
	&	=	& (((h\o g)x)h\t)f-((h\o(gx))h\t)f-(((h\o g)x)h\t)f+((h\o g)(xh\t))f\\
	&	=	& ((h\o g)(xh\t))f-((h\o(gx))h\t)f\\
	&	=	& ((h\o(gx))h\t)f-((h\o(gx))h\t)f\\
	&	=	& 0
\end{eqnarray*}
In the penultimate equality we have used one of the equivalent Moufang identities in view of Lemma~4.4. Hence, we have shown that the Moufang identity is satisfied for any element which can be expressed as a product of $n+1$ elements of $L$, and hence $U(L)$ is Moufang.
\endproof

Going in the other direction, if $H$ is a Moufang Hopf quasigroup with invertible antipode then the set
\[ L(H)=\{x\in H\ |\ \Delta x=x\tens 1+1\tens x\}\]
is a Mal'tsev algebra with the commutator bracket $[x,y]=xy-yx$. Indeed, from Lemma~4.4 applied to such elements we see that
\[ L(H)\subseteq N_{alt}(H)=\{x\in H\ |\ (x,h,g)=-(h,x,g)=(h,g,x)\ \forall h,g\in H\}\]
(the `alternative nucleus' used in \cite{PS}) as Mal'tsev algebras.

We can also construct examples by cross product methods as follows.

\begin{proposition}
Let $H$ be a Hopf quasigroup equipped with an action of a quasigroup $\Gcal$. Thus there is a linear action of $\Gcal$ on $H$ such that $\sigma.(hg)=(\sigma.h)(\sigma.g)$, $\sigma.1=1$, $(\sigma\tens\sigma).\Delta h=\Delta(\sigma.h)$, $\eps(\sigma.h)=\eps(h)$ and $\sigma\sigma'.h=\sigma.(\sigma'.h)$ for all $\sigma, \sigma'\in \Gcal$ and $h,g\in H$. The cross product algebra $H\rtimes k\Gcal$ is again a Hopf quasigroup.
\end{proposition}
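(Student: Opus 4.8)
The plan is to equip $H\rtimes k\Gcal$, which as a vector space is $H\tens k\Gcal$, with the smash-product structure $(h\tens\sigma)(g\tens\tau)=h(\sigma.g)\tens\sigma\tau$ and unit $1\tens e$, the tensor-product coalgebra $\Delta(h\tens\sigma)=(h\o\tens\sigma)\tens(h\t\tens\sigma)$ with $\eps(h\tens\sigma)=\eps(h)$ (legitimate since each $\sigma$ is grouplike in $k\Gcal$), and the antipode $S(h\tens\sigma)=(\sigma^{-1}.Sh)\tens\sigma^{-1}$. First I would record the easy structural facts on simple tensors: that $1\tens e$ is a two-sided unit (using $\sigma.1=1$ and that $e$ acts trivially), that $\eps$ is a counit and $\Delta$ is coassociative (inherited from $H$ and the grouplike property of $\sigma$), and that $\Delta,\eps$ are algebra homomorphisms. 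This last point is where the hypotheses $\sigma.(hg)=(\sigma.h)(\sigma.g)$, $(\sigma\tens\sigma).\Delta h=\Delta(\sigma.h)$ and $\eps(\sigma.h)=\eps(h)$ enter, together with $\Delta,\eps$ already being algebra maps on $H$; these are short direct checks.

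The substance is the verification of the four antipode axioms of Definition~4.1 on a simple tensor $(h\tens\sigma)\tens(g\tens\tau)$. Each reduces, after moving the action through products via $(\sigma.a)(\sigma.b)=\sigma.(ab)$ and collapsing the $k\Gcal$-component using the inverse-property identities of $\Gcal$, to the corresponding axiom in $H$. For the first axiom one finds that $m(\id\tens m)(S\tens\id\tens\id)(\Delta\tens\id)$ sends $(h\tens\sigma)\tens(g\tens\tau)$ to $\sigma^{-1}.(S(h\o)(h\t(\sigma.g)))\tens\sigma^{-1}(\sigma\tau)$; here $\sigma^{-1}(\sigma\tau)=\tau$ by the left inverse property, the $H$-identity $S(h\o)(h\t x)=\eps(h)x$ produces $\eps(h)(\sigma.g)$ inside, and finally $\sigma^{-1}.(\sigma.g)=(\sigma^{-1}\sigma).g=g$, giving $\eps(h)(g\tens\tau)=\eps(h\tens\sigma)(g\tens\tau)$ as required. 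The second axiom is entirely parallel, now using $h\o(S(h\t)g)=\eps(h)g$ and $\sigma(\sigma^{-1}\tau)=\tau$.

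The main obstacle lies in the two right-handed axioms (the $(\id\tens\Delta)$ ones): there the action surviving on the $H$-factor is $\rho.{-}$ with $\rho=\sigma\tau^{-1}$, and a direct computation shows the calculation closes only if the action commutes with the antipode of $H$, i.e.
\[ \sigma.(Sh)=S(\sigma.h),\quad\forall\sigma\in\Gcal,\ h\in H. \]
I would establish this as a lemma in two steps. First, the antipode of any Hopf quasigroup is \emph{unique}: if $S'$ is another map obeying the axioms then, using $S(a\o)(a\t x)=\eps(a)x$ for $S$, Proposition~4.2(1) for $S'$, and coassociativity,
\[ S'(h)=\eps(h\o)S'(h\t)=S(h\o)(h\t S'(h\th))=S(h\o)\eps(h\t)=S(h). \]
Second, each $\theta_\sigma:=\sigma.{-}$ is a bijective algebra and coalgebra automorphism of $H$ with inverse $\theta_{\sigma^{-1}}$ (since $\sigma^{-1}.(\sigma.h)=(\sigma^{-1}\sigma).h=h$), so applying $\theta_\sigma$ to each of the four antipode identities for $H$, and using that $\theta_\sigma$ preserves $m,\Delta,\eps$, shows that $\theta_\sigma S\theta_\sigma^{-1}$ also satisfies all four; by uniqueness $\theta_\sigma S\theta_\sigma^{-1}=S$, which is precisely the required commutation.

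With the commutation in hand the right-handed axioms close just as the left-handed ones: one computes that $m(m\tens\id)(\id\tens S\tens\id)(\id\tens\Delta)$ sends $(h\tens\sigma)\tens(g\tens\tau)$ to $[h(\rho.Sg\o)](\rho.g\t)\tens(\sigma\tau^{-1})\tau$, then uses $(\sigma\tau^{-1})\tau=\sigma$ (from $(u^{-1})^{-1}=u$ and the right inverse property of $\Gcal$), rewrites $\rho.Sg\o=S(\rho.g\o)$ via the lemma, and applies the $H$-identity $(g\,S(h\o))h\t=g\eps(h)$; the fourth axiom is symmetric, using $(gh\o)S(h\t)=g\eps(h)$. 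Having checked that $H\rtimes k\Gcal$ is a unital algebra, a coassociative coalgebra with $\Delta,\eps$ algebra homomorphisms, and that all four antipode identities hold, it is a Hopf quasigroup by Definition~4.1, completing the proof.
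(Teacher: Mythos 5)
Your proposal is correct and follows the same skeleton as the paper's proof: the same smash product, tensor coproduct, the same antipode formula $S(h\tens\sigma)=\sigma^{-1}.Sh\tens\sigma^{-1}$, the same pivotal lemma $\sigma.(Sh)=S(\sigma.h)$, and then verification of the four axioms using the inverse-property identities of $\Gcal$. Where you genuinely diverge is in how the lemma is proved. The paper proves it by a direct cancellation: both $(\sigma.h\o)(\sigma.Sh\t)$ and $(\sigma.h\o)S(\sigma.h\t)$ equal $\eps(h)1$, and applying this equality to $h\t$ and multiplying on the left by $S(\sigma.h\o)$ collapses both sides via the Hopf quasigroup identities to $\sigma.Sh$ and $S(\sigma.h)$ respectively. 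You instead first establish uniqueness of the antipode --- a fact the paper never records explicitly, and one that needs care since the convolution product is not associative here; your chain $S'(h)=\eps(h\o)S'(h\t)=S(h\o)(h\t S'(h\th))=S(h\o)\eps(h\t)=S(h)$, using the first axiom for $S$ and Proposition~4.2(1) for $S'$, handles this correctly --- and then observe that $\theta_\sigma S\theta_\sigma^{-1}$ satisfies all four axioms, hence equals $S$. Your route is more conceptual and yields uniqueness of the antipode as a by-product; the paper's computation is more elementary and, notably, does not require the map to be bijective --- it shows that \emph{any} algebra-and-coalgebra map commutes with $S$, whereas your conjugation argument needs $\theta_\sigma$ invertible (available here, since $\theta_{\sigma^{-1}}$ inverts it using $e.h=h$). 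A further point in your favour: you identify precisely where the lemma is indispensable, namely the two right-handed axioms, where the action separates $S(g\o)$ from $g\t$ across the other factor; the paper's single displayed verification is of a left-handed axiom, which in fact never invokes the lemma at all.
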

\proof 
The (not necessarily associative) algebra product is $(h\tens\sigma)(g\tens\sigma')=h \sigma.g\tens \sigma\sigma'$ and the coproduct is the tensor product one, where $\Delta\sigma=\sigma\tens\sigma$. This is a coalgebra as required with tensor product counit. It is easy to see that $\Delta,\eps$ are algebra maps.

For the quasigroup identities, we note first a lemma that a map of the algebra and coalgebra necessarily commutes with $S$, so $\sigma.(Sh)=S(\sigma.h)$ for all $h\in H$, $\sigma\in \Gcal$.
If we consider
\[ (\sigma.h\o)(\sigma.Sh\t)=\sigma.(h\o Sh\t)=\sigma.\eps(h) = \eps(h) \]
But also,
\[ (\sigma.h\o)S(\sigma.h\t)=(\sigma.h)\o S((\sigma.h)\t)=\eps(\sigma.h)=\eps(h) \]
Hence
\[ (\sigma.h\o)S(\sigma.h\t) = (\sigma.h\o)(\sigma.Sh\t) \]
By replacing $h$ by $h\t$ and multiplying on the left by $S(\sigma.h\o)$, we can use the quasigroup identities to obtain
\[ S(\sigma.h)=\sigma.S(h)\]
as required.

Also, by the properties of $S$ proven above, we know that $S$ if it exists on $H\rtimes k\Gcal$ must be given by $S(h\tens\sigma)=(1\tens \sigma^{-1})(Sh\tens 1)=\sigma^{-1}.Sh\tens\sigma^{-1}$. We then verify the required identities in a straightforward manner. For example, using properties of a quasigroup and a Hopf quasigroup,
\begin{eqnarray*}
(h\tens\sigma)\o(S((h\tens\sigma)\t)(g\tens\sigma'))	&	=	& (h\o\tens\sigma)((\sigma^{-1}.S(h\t)\tens\sigma^{-1})(g\tens\sigma'))\\
	&	=	& (h\o\tens\sigma)((\sigma^{-1}.S(h\t))(\sigma^{-1}.g)\tens\sigma^{-1}\sigma')\\
	&	=	& (h\o\tens\sigma)(\sigma^{-1}.(S(h\t)g)\tens \sigma^{-1}\sigma')\\
	&	=	& h\o \sigma.(\sigma^{-1}.(S(h\t)g))\tens \sigma(\sigma^{-1}\sigma')\\
	&	=	& h\o (\sigma\sigma^{-1}.(S(h\t)g))\tens \sigma(\sigma^{-1}\sigma')\\
	&	=	& h\o(S(h\t)g)\tens \sigma(\sigma^{-1}\sigma')\\
	&	=	& \eps(h) g\tens \sigma'\\
	&	=	& \eps(h\tens\sigma)g\tens\sigma'
\end{eqnarray*}
\endproof

\begin{example}
Let $H=k S^{2^n-1}$ and $G=\Z_2^n$. We label the elements of the latter as $\sigma_a$ where $a\in\Z_2^n$ and note the action
\[  \sigma_a.e_b=(-1)^{a\cdot b} e_b\]
on $k_FG$. This leaves the norm $q$ invariant and hence defines an action on the set $S^{2^n-1}$. This action extends linearly to one on $H$ and leads to a cross product $k S^{2^n-1}\rtimes kZ_2^n$ which we identify with the Hopf quasigroup associated to the quasigroup cross product $S^{2^n-1}\rtimes\Z_2^n$.
\label{cross}
\end{example}

While this example is not very interesting, we will see in the next section that replacing $H$ by its dual is (put another way, we can consider cross coproducts rather than cross products).  Also, using our framework  one can extend Proposition~4.10 to an action by a general cocommutative Hopf quasigroup as we see next.  Clearly other Hopf algebra constructions can similarly be extended to the quasigroup case. 

\begin{definition}
Let $H$ be a Hopf quasigroup. A vector space $V$ is a \textit{left $H$-module} if there is a linear map $\alpha:H\tens V\to V$ written as $\alpha(h\tens v)=h.v$ such that 
\[ h.(g.v)=(hg).v, \quad 1.v=v\]
for all $h,g\in H, v\in V$. An algebra (not necessarily associative) $A$ is an \textit{$H$-module algebra} if further
\[h.(ab)=(h\o.a)(h\t.b), \quad h.1=\eps h\]
for all $h\in H, a,b\in A$. Finally, a coalgebra $C$ is an \textit{$H$-module coalgebra} if
\[\Delta(h.c)=h\o.c\o\tens h\t.c\t, \quad \eps(h.c)=\eps(h)\eps(c)\]
for all $h\in H, c\in C$.
Therefore we have the notion of a \textit{left $H$-module Hopf quasigroup}; we can similarly define right actions of Hopf quasigroups.
\end{definition}

\begin{lemma}
If $A$ is a left $H$-module algebra and a left $H$-module coalgebra, then
\[ h.S(a)=S(h.a)\]
for all $h\in H, a\in A$.
\end{lemma}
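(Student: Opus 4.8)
The plan is to imitate the short sub-lemma inside the proof of Proposition~4.10, where $\sigma.S(h)=S(\sigma.h)$ was obtained by writing down one auxiliary element and collapsing it in two different ways. Note first that for $S(a)$ to make sense $A$ must carry its own antipode, so throughout I treat $A$ as a Hopf quasigroup which is at the same time a left $H$-module algebra and a left $H$-module coalgebra. I will use freely the antipode identities in $A$ coming from Proposition~\ref{antipode}, namely $a\o S(a\t)=\eps(a)1$ and $S(a\o)(a\t b)=\eps(a)b$ (the latter from Definition~4.1), together with the module-algebra relations $(h\o.b)(h\t.c)=h.(bc)$ and $h.1=\eps(h)1$, and the module-coalgebra relations $\Delta(h.a)=h\o.a\o\tens h\t.a\t$, $\eps(h.a)=\eps(h)\eps(a)$.

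The central device is the auxiliary element
\[ E=S(h\o.a\o)\bigl((h\t.a\t)(h\th.S(a\th))\bigr), \]
which I will evaluate in two ways. For the first evaluation I collapse the inner bracket by the module-algebra property with acting element $h\t$, so that $(h\t.a\t)(h\th.S(a\th))=h\t.(a\t S(a\th))=h\t.(\eps(a\t)1)=\eps(h\t)\eps(a\t)1$, using the antipode identity $a\o S(a\t)=\eps(a)1$ in $A$ and then $h.1=\eps(h)1$. Substituting back and contracting the remaining legs of $h$ and of $a$ by the counit gives $E=S(h.a)$.

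For the second evaluation I keep $h\th.S(a\th)$ as a spectator and apply the left identity $S(w\o)(w\t d)=\eps(w)d$ of $A$ to the first two legs $w\o=h\o.a\o$ and $w\t=h\t.a\t$. What makes this legitimate is that $\alpha:H\tens A\to A$, $h\tens a\mapsto h.a$, is a coalgebra map (this is exactly the module-coalgebra condition), so the first two tensor legs of $E$ genuinely are the coproduct of $h\o.a\o$-grouped, independently of the third leg by coassociativity. Contracting them produces $\eps(h\o)\eps(a\o)$ times the spectator $h\t.S(a\t)$, and a final application of the counit collapses this to $h.S(a)$. Comparing the two evaluations yields $S(h.a)=h.S(a)$, as required.

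The only genuine obstacle is the Sweedler bookkeeping: two independent coproducts, of $h$ and of $a$, are in play simultaneously, so one must ensure that each antipode identity in $A$ is applied to the correct pair of legs while the third leg is treated as truly free. Recognising $\alpha$ as a coalgebra map is what keeps this under control, since it guarantees the iterated coproduct of $h.a$ factorises as $h\o.a\o\tens h\t.a\t\tens h\th.a\th$; each of the two collapses above is then a single use of an antipode identity followed by the counit.
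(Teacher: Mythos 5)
Your proof is correct and is essentially the paper's own argument: your auxiliary element $E=S(h\o.a\o)\bigl((h\t.a\t)(h\th.S(a\th))\bigr)$ is exactly what the paper forms when it applies its intermediate identity $(h\o.a\o)(h\t.S(a\t))=(h\o.a\o)S(h\t.a\t)$ to $h\t,a\t$ and multiplies on the left by $S(h\o.a\o)$, and your two evaluations invoke the same three ingredients (the module-algebra collapse, the module-coalgebra regrouping of Sweedler legs, and the Hopf quasigroup antipode identities in $A$). The only difference is organizational: you collapse the single three-legged element in two ways, whereas the paper first shows the two two-legged expressions both equal $\eps(h)\eps(a)$ and then cancels.
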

\proof
To see this, we use the definition of an action on an algebra to find
\[ (h\o. a\o)(h\t.S(a\t)) = h.(a\o S(a\t)) = \eps(h)\eps(a)\]
We also find, using the definition of an action on a coalgebra
\[ (h\o.a\o)S(h\t.a\t) = (h.a)\o S((h.a)\t) = \eps(h.a) = \eps(h)\eps(a)\]
So we have
\[ (h\o. a\o)(h\t.S(a\t)) = (h\o.a\o)S(h\t.a\t)\]
Applying this to $h\t,a\t$, multiplying on the left by $S(h\o.a\o)$ and using the Hopf quasigroup identities, gives the require identity.
\endproof

\begin{proposition}
Let $H$ be a cocommutative Hopf quasigroup and $A$ be a left $H$-module Hopf quasigroup, then there is a left cross product Hopf quasigroup $A\rtimes H$ built on $A\tens H$ with tensor product coproduct and unit and
\[ (a\tens h)(b\tens g) = a(h\o.b)\tens h\t g,\quad S(a\tens h)=S(h\t).S(a)\tens S(h\o) \]
for all $a,b\in A, g,h\in H$. 
\end{proposition}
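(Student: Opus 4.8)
The plan is to take the stated product, unit, tensor-product coproduct and counit, and antipode as given, and to verify the axioms of Definition~4.1 in turn, the easy structural points first. I would first check that $1\tens 1$ is a two-sided unit, which is immediate from $1.b=b$, $h.1=\eps(h)1$ and the counit axiom; that the tensor-product $\Delta$ and $\eps$ make $A\tens H$ a coassociative coalgebra (automatic for a tensor product of coalgebras); and that $\eps$ is an algebra map, which is routine from $\eps(h.b)=\eps(h)\eps(b)$ together with multiplicativity of $\eps_A,\eps_H$.

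The first substantive step is that $\Delta$ is an algebra homomorphism. Expanding $\Delta((a\tens h)(b\tens g))$ and $\Delta(a\tens h)\,\Delta(b\tens g)$ in Sweedler notation and using that $A$ is an $H$-module coalgebra (so $\Delta(h.b)=h\o.b\o\tens h\t.b\t$) together with multiplicativity of $\Delta_A$ and $\Delta_H$, both sides reduce to a threefold coproduct of $h$ acting on $a,b,g$. The two expressions then agree \emph{precisely} after interchanging the two inner legs of $\Delta^{(2)}h$, and this interchange is exactly cocommutativity of $H$; this is where that hypothesis enters, just as in the smash/bosonisation construction for ordinary Hopf algebras.

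Next I would note that by Proposition~4.2 the antipode, if it exists, is forced to be antimultiplicative, so that the stated formula is $S(a\tens h)=(1\tens S h)(Sa\tens 1)$. I would record two facts used repeatedly: since $H$ is cocommutative, the anticomultiplicativity of Proposition~4.2(3) upgrades to $S_H$ being a coalgebra map, $\Delta(S h)=S(h\o)\tens S(h\t)$; and Lemma~4.13 gives $h.S(a)=S(h.a)$. Then I would verify the four antipode identities of Definition~4.1 one at a time. For the first, $S(X\o)(X\t Y)=\eps(X)Y$ with $X=a\tens h$ and $Y=b\tens g$, the $H$-component of the left side involves only $h,g$ (the action lives entirely in the $A$-slot) and collapses to $g$ by the $H$-antipode axiom after using cocommutativity to bring the relevant legs adjacent; the $A$-component reduces to $[S(h\o).S(a\o)]\,[S(h\t).(a\t(h\th.b))]$ for $\Delta^{(2)}h=h\o\tens h\t\tens h\th$. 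Using cocommutativity to swap $h\o\leftrightarrow h\t$, the fact that $S_H$ is a coalgebra map lets me regroup the two actions $S(h\o),S(h\t)$ into a single action by $S$ of the grouped leg, which the module-algebra axiom distributes across the product, turning the inner product into $S(h\o).[S(a\o)(a\t(h\t.b))]$ for a relabelled $\Delta h=h\o\tens h\t$. The bracket collapses by the $A$-antipode axiom to $\eps(a)(h\t.b)$, and then $(S(h\o)h\t).b=\eps(h)b$ by the $H$-antipode axiom and module compatibility, giving $\eps(a)\eps(h)\,b$ in the $A$-slot as required.

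The step I expect to be the main obstacle is precisely this verification of the antipode identities in the nonassociative setting: since neither $A$ nor $H$ is associative, one cannot reassociate freely, and each identity must be matched to the \emph{specific} bracketings of Definition~4.1 for $A$ and for $H$. The technical heart is to arrange, using cocommutativity of $H$ and the resulting coalgebra-map property of $S_H$, that the $H$-action on the $A$-factors is gathered into a single action, pulled through the nonassociative product by the module-algebra axiom, and collapsed by the $H$-antipode axiom in lockstep with the collapse of the $A$-part by the $A$-antipode axiom. Once this bookkeeping is set up for the first identity, the remaining three follow by the same pattern, using instead the right-handed antipode axioms of $A$ and $H$ together with Lemma~4.13 and $h.1=\eps(h)1$.
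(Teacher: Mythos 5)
Your proposal is correct and follows essentially the same route as the paper's proof: first check that the tensor-product coproduct is an algebra homomorphism (with cocommutativity of $H$ entering exactly where you say, to swap the two inner legs of the iterated coproduct of $h$), then verify the antipode identities by gathering the two $H$-actions into a single action via the module-algebra axiom and collapsing in turn with the antipode axiom of $A$, the module axiom, and the antipode property of $H$. The only cosmetic difference is the order in which the $H$-slot and the $A$-slot are collapsed, which is immaterial under cocommutativity.
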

\proof
To see that $\Delta$ is an algebra homomorphism, we compute
\begin{eqnarray*}
\Delta((a\tens h)(b\tens g))	& =	& \Delta(a(h\o.b)\tens h\t g)\\
	&	=	& a\o(h\o.b)\o\tens h\t\o g\t\tens a\t(h\o.b)\t\tens h\t\t g\t\\
	&	=	& a\o(h\o\o.b\o)\tens h\t\o g\o\tens a\t(h\o\t.b\t)\tens h\t\t g\t\\
\Delta(a\tens h)\Delta(b\tens g)	&	=	& (a\o\tens h\o)(b\o\tens g\o)\tens (a\t\tens h\t)(b\t\tens g\t)\\
	&	=	& a\o(h\o\o.b\o)\tens h\o\t g\o \tens a\t(h\t\o.b\t)\tens h\t\t g\t
\end{eqnarray*}
These are equal if $H$ is cocommutative. We will check one of the quasigroup identities, the rest are similar.
\begin{eqnarray*}
S((a\tens h)\o)((a\tens h)\t(b\tens g))	&	=	& (S(h\o)\o.S(a\o)\tens S(h\o)\t)(a\t(h\t.b)\tens h\th g)\\
	&	=	& (S(h\o)\o.S(a\o))(S(h\o)\t.(a\o(h\t\o.b)))\\
	&		& \quad \tens S(h\o)\th(h\th g)\\
	&	=	& S(h\o)\o.(S(a\o)(a\o(h\t.b)))\tens S(h\o)\t(h\th g)\\
	&	=	& \eps(a)S(h\t).(h\th.b)\tens S(h\o)(h_{(4)} g)\\
	&	=	& \eps(a) (S(h\t)h\th).b\tens S(h\o)(h_{(4)} g)\\
	&	=	& \eps(a) b\tens S(h\o)(h\t g)\\
	&	=	& \eps(a)\eps(h) b\tens g
\end{eqnarray*}
The first equality uses the definition of the comultiplication, the antipode and the multiplication. The third uses the property of an action on an algebra. The fourth equality uses the definition of $A$ a Hopf quasigroup on $a$ and the fifth uses the definition of an action on a vector space. Finally we use the Hopf quasigroup identity on $h\in H$.
\endproof

\section{Hopf coquasigroups}

Armed with the above linearised concept of a quasigroup, we can now reverse arrows on all maps. This means that in the finite dimensional case a Hopf quasigroup is equivalent on the dual linear space (by dualising all structure maps) to the following concept of a Hopf coquasigroup.

\begin{definition}
A \textit{Hopf coquasigroup} is a unital associative algebra $A$ equipped with counital algebra homomorphisms $\Delta:A\to A\tens A$, $\eps:k\to A$, and linear map $S:A\to A$ such that
\begin{equation*}(m\tens \id)(S\tens\id\tens\id)(\id\tens\Delta)\Delta=1\tens\id = (m\tens \id)(\id\tens S\tens\id)(\id\tens\Delta)\Delta \end{equation*}
\begin{equation*} (\id\tens m)(\id\tens S\tens\id)(\Delta\tens\id)\Delta = \id\tens 1 = (\id\tens m)(\id\tens\id\tens S)(\Delta\tens\id)\Delta \end{equation*}
A Hopf coquasigroup is \textit{flexible} if
\[ a\o a\t\t \tens a\t\o = a\o\o a\t \tens a\o\t \quad \forall a\in A\]
and \textit{alternative} if also
\[ a\o a\t\o \tens a\t\t = a\o\o a\o\t \tens a\t,\]
\[ a\o \tens a\t\o a\t\t = a\o\o \tens a\o\t a\t \]
for all $a\in A$. We say $A$ is \textit{Moufang} if
\[ a\o a\t\t\o \tens a\t\o \tens a\t\t\t = a\o\o\o a\o\t \tens a\o\o\t \tens a\t \quad \forall a\in A\]
\end{definition}

The term `counital' here means
\[ (\id\tens\eps)\Delta=(\eps\tens\id)\Delta=\id\]
but we do not demand that $\Delta$ is coassociative. 

\begin{proposition} Let $A$ be Hopf coquasigroup.  Then 
\begin{enumerate}
\item $m(S\tens\id)\Delta=1.\eps=m(\id\tens S)\Delta$
\item $S$ is antimultiplicatve $S(ab)=(Sb)(Sa)$ for all $a,b\in A$
\item $S$ is anticomultiplicative $\Delta(S(a))=S(a\t)\tens S(a\o)$ for all $a\in A$. 
\end{enumerate}
Hence a Hopf coquasigroup is a Hopf algebra {\em iff} it is coassociative. 
\end{proposition}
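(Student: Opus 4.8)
The plan is to follow the pattern of Proposition~\ref{antipode}, but with the roles of product and coproduct interchanged throughout, so I first rewrite the four defining identities in Sweedler form. The first line of the definition reads
\[ S(a\o)a\t\o\tens a\t\t = 1\tens a = a\o S(a\t\o)\tens a\t\t,\]
and the second line reads
\[ a\o\o\tens S(a\o\t)a\t = a\tens 1 = a\o\o\tens a\o\t S(a\t).\]
For part (1) I would simply apply a counit to the free leg of these. Applying $\id\tens\eps$ to the first identity and using counitality ($a\t\o\eps(a\t\t)=a\t$) gives $S(a\o)a\t=\eps(a)1$, i.e. $m(S\tens\id)\Delta=1.\eps$; applying $\eps\tens\id$ to the last identity gives $a\o S(a\t)=\eps(a)1$, i.e. $m(\id\tens S)\Delta=1.\eps$. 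This is the exact dual of the way part (1) was extracted in Proposition~\ref{antipode}.

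For parts (2) and (3) I would dualise the computations of Proposition~\ref{antipode}, noting that under the product/coproduct interchange the two statements swap: anti\emph{multiplicativity} of $S$ on $A$ corresponds to anti\emph{comultiplicativity} of $S$ on a Hopf quasigroup, and conversely. Thus to prove (2), $S(ab)=(Sb)(Sa)$, I would take the expression obtained by reversing every arrow in the element $S(h\t)(h\th S(h_{(5)})\o)\tens S(h\o)(h_{(4)}S(h_{(5)})\t)$ used for anticomultiplicativity in Proposition~\ref{antipode} — that is, replace each product by a coproduct, keep each $S$, and reverse the order of composition — obtaining a single element built from iterated coproducts of $a$ and $b$. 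Evaluating it with one bracketing, using the coquasigroup identities above, should collapse it to $S(ab)$; evaluating it with the other bracketing should collapse it to $(Sb)(Sa)$. For (3), $\Delta(Sa)=S(a\t)\tens S(a\o)$, I would dualise the antimultiplicativity computation $S(g\o)((S(h\o)(h\t g\t))S(h\th g\th))$ of Proposition~\ref{antipode} in the same mechanical way, computing the resulting co-expression two ways to obtain $\Delta(Sa)$ on one side and $S(a\t)\tens S(a\o)$ on the other. In each case the essential inputs are the four identities above together with the fact that $\Delta$ and $\eps$ are algebra maps.

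For the final assertion, part (1) already shows that every Hopf coquasigroup satisfies the ordinary antipode axiom $m(S\tens\id)\Delta=1.\eps=m(\id\tens S)\Delta$. Hence if $A$ is in addition coassociative it is a bialgebra (associative algebra, coassociative coalgebra, with $\Delta,\eps$ algebra maps) equipped with an antipode, i.e. a Hopf algebra. Conversely, in any Hopf algebra coassociativity lets one insert the usual antipode property into the free leg and recover each of the four coquasigroup identities; for example $S(a\o)a\t\o\tens a\t\t = S(a\o\o)a\o\t\tens a\t = \eps(a\o)1\tens a\t = 1\tens a$ by coassociativity followed by the Hopf antipode property on $a\o$. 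So coassociativity is exactly what is needed.

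The main obstacle is the one already signalled by the absence of coassociativity: we cannot invoke the usual convolution-algebra shortcut (that $S(ab)$ and $(Sb)(Sa)$ are both two-sided convolution inverses of the product map, hence equal), since associativity of the convolution product fails without coassociativity of $\Delta$. Everything must therefore be carried out by explicit, carefully bracketed Sweedler manipulations, and the delicate point is to choose the one expression whose two natural bracketings each match a defining identity exactly — getting the placement of the $S$'s and the bookkeeping of the iterated-coproduct indices right is the crux, precisely as in the Hopf quasigroup case.
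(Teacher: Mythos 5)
Your proposal is correct and follows essentially the same route as the paper: the paper's own proof explicitly says it is obtained by writing the proof of Proposition~4.2 as diagrams and reversing all arrows, and it writes out the resulting Sweedler computations (with exactly the swap you identify, anticomultiplicativity in the quasigroup case dualising to antimultiplicativity in the coquasigroup case and vice versa) only ``for clarity''. Your explicit counit argument for part (1) and your two-sided argument for the final ``Hopf algebra iff coassociative'' assertion match, and indeed slightly expand on, what the paper leaves as immediate.
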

\proof
This is the dual proposition to Proposition~\ref{antipode} and the proof is obtained by writing the proof of that as diagrams, and reversing all of the arrows. We will include the proof in this dual case for clarity.
(1) is immediate from the definition. To prove (2), we consider
\[S(b\o)S(a\o)a\t\o b\t\o S(a\t\t b\t\t)\]
Using that $\Delta$, $\eps$ are algebra homomorphisms, and (1), this equals
\[ S(b\o)S(a\o)((a\t b\t)\o S((a\t b\t)\t)) = S(b\o)S(a\o)\eps(a\t b\t) = S(b)S(a)\]
We can also write the original expression as
\[ (S(b\o)(S(a\o)a\t\o)b\t\o)S(a\t\t b\t\t)\]
Using defining properties of a Hopf coquasigroup, this equals
\[ (S(b\o)b\t\o)S(ab\t\t)=S(ab)\]
This holds for all $a,b\in A$, hence $S$ is antimultiplicative.

To prove (3), we consider $S(a\t\o\o)a\t\o\t\o S(a\t\t)\o \tens S(a\o)a\t\o\t\t S(a\t\t)\t$. On one hand this equals,
\[ S(a\t\o\o)(a\t\o\t S(a\t\t))\o \tens S(a\o)(a\t\o\t S(a\t\t))\t \]
We apply the identity $a\o\o\tens a\o\t S(a\t) = a\tens 1$ to $a\t$ to obtain,
\[ S(a\t)1\o \tens S(a\o)1\t = S(a\t)\tens S(a\o)\]
On the other hand, we have
\[ S(a\t\o\o)a\t\o\t\o S(a\t\t)\o \tens S(a\o)a\t\o\t\t S(a\t\t)\t \]
and we apply the identity $S(a\o)a\t\o \tens a\t\t = 1\tens a$ to $a\t\o$ to obtain,
\[ S(a\t\t)\o\tens S(a\o)a\t\o S(a\t\t)\t \]
Now we apply the same identity to $a\t$ to obtain
\[ S(a)\o\tens  S(a)\t \]
Therefore
\[ S(a)\o\tens  S(a)\t = S(a\t)\tens S(a\o)\]
for all $a\in A$, as required.
\endproof

\begin{proposition}
Let $A$ be a Hopf coquasigroup, then $S^2=\id$ if $A$ is commutative or cocommutative.
\end{proposition}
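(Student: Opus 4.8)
The plan is to dualise the proof of Proposition~4.3, which established the analogous result for Hopf quasigroups, exactly as Proposition~5.2 dualised Proposition~\ref{antipode}. Since we have already proved (Proposition~5.2) that in any Hopf coquasigroup the antipode is anticomultiplicative, $\Delta(S(a))=S(a\t)\tens S(a\o)$, the natural starting point is to compute $S^2$ by feeding this anticomultiplicativity into one of the defining antipode axioms. Concretely, I would start from the expression
\[ S^2(a)=(S(a\th)a_{(4)})S^2(a\t)\eps(a\o) \]
or a similar combination chosen so that one of the Hopf coquasigroup identities collapses part of it; the idea is to write $S^2(a)$ using a counital insertion and then rearrange using anticomultiplicativity of $S$ so that the commutative (resp.\ cocommutative) hypothesis can be applied.

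First I would treat the commutative case. Here the product satisfies $ab=ba$, so in any expression I am free to transpose the two tensor factors being multiplied together. The concrete steps are: (i) apply anticomultiplicativity of $S$ to rewrite $\Delta S$ of the relevant leg in terms of $S$ of the flipped components; (ii) use commutativity to move $S^2(a\t)$ or the corresponding factor next to the term it should cancel against; (iii) invoke the identity $(m\tens\id)(\id\tens S\tens\id)(\id\tens\Delta)\Delta$ or the partner from Proposition~5.2(1), namely $m(S\tens\id)\Delta=1.\eps=m(\id\tens S)\Delta$, to contract a pair into a counit. The counit then eats one leg and what survives is $a$ itself. For the cocommutative case $a\o\tens a\t=a\t\tens a\o$ the symmetric argument applies: cocommutativity lets me transpose coproduct outputs, and I instead use the other pair of antipode axioms (the ones with $S$ in the middle or third slot) to perform the contraction.

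The main obstacle I anticipate is purely bookkeeping rather than conceptual: because $\Delta$ is \emph{not} coassociative, the various iterated coproducts $a\o\o$, $a\o\t$, $a\t\o$, $a\t\t$ are genuinely distinct bracketings, and Sweedler notation must be handled with care to ensure that every time I apply an antipode axiom I am applying it to exactly the bracketing for which that axiom is stated. In the Hopf algebra case one can shuffle subscripts freely, but here I must check at each step that the grouping of the coproduct matches one of the four specific axioms in Definition~5.1 (and their consequences in Proposition~5.2(1)). So the careful step is verifying that, after using anti-comultiplicativity of $S$ and the (co)commutativity hypothesis, the remaining product-with-antipode pattern is literally one of the permitted reductions and not merely a reassociated cousin of it. Once that matching is confirmed, each of the two cases reduces in three or four lines to $S^2(a)=a$, exactly mirroring Proposition~4.3, and since $a\in A$ was arbitrary we conclude $S^2=\id$.
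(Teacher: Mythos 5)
Your overall architecture is the paper's: write $S^2(a)$ as a combination that collapses by the antipode identities, rearrange using the (co)commutativity hypothesis, and re-collapse to obtain $a$. The genuine gap is that you have dualised everything \emph{except the key tool}. The proof of Proposition~4.3 runs on anti\emph{co}multiplicativity because there the coalgebra is coassociative; under dualisation the roles swap, and the proof of the present statement runs on anti\emph{multi}plicativity of $S$ (Proposition~5.2(2)), which your proposal never invokes. The paper computes
\[ S^2(a)=S^2(a\o)S(a\t\o)a\t\t=S(a\t\o S(a\o))a\t\t, \]
the first equality by Proposition~5.2(1) applied to $a\t$ plus counitality, the second by $S(xy)=S(y)S(x)$; then commutativity inside the argument of $S$ together with the axiom $S(a\o)a\t\o\tens a\t\t=1\tens a$, respectively cocommutativity together with the axiom $a\o\o\tens a\o\t S(a\t)=a\tens 1$, finish. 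Your expression $(S(a\th)a_{(4)})S^2(a\t)\eps(a\o)$ cannot be processed by your step (i): on the reading that makes it collapse via Proposition~5.2(1), the two antipodes sit on legs of \emph{different} coproducts, so no pattern $S(b\t)\tens S(b\o)=\Delta S(b)$ ever occurs and anticomultiplicativity has nothing to act on; on the alternative reading (antipodes on the two legs of $\Delta(a\o)$) anticomultiplicativity does apply, but the surviving plain leg then separates the two antipode factors inside the product. In the commutative case you may commute past it, but in the cocommutative case the algebra need not be commutative, the factors can never be brought adjacent (you also placed the $S^2$ factor on the right, which blocks the antimultiplicative merge as well), and every cocommutativity relabelling of your expression either reproduces it or collapses circularly back to $S^2(a)$. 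So the commutative half of your plan can be salvaged, but the cocommutative half fails from your chosen expression with your chosen moves.

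There are two repairs. The paper's: start from $S^2(a)=S^2(a\o)S(a\t\o)a\t\t$ with $S^2$ on the \emph{left} and merge the adjacent antipodes by antimultiplicativity, as above. Alternatively, keeping anticomultiplicativity as the engine, you must choose the other counital insertion: applying $x\tens y\mapsto xS^2(y)$ to the axiom $a\o S(a\t\o)\tens a\t\t=1\tens a$ gives
\[ S^2(a)=(a\o S(a\t\o))S^2(a\t\t)=(a\o S(a\t)\t)S(S(a\t)\o), \]
where the second equality is anticomultiplicativity and now both antipodes lie on the two legs of the single coproduct $\Delta(a\t)$. Then associativity of the product, the (co)commutativity hypothesis and Proposition~5.2(1) applied to the element $S(a\t)$ finish both cases: in the commutative case $S(S(a\t)\o)S(a\t)\t\, a\o=\eps(S(a\t))a\o=a$, and in the cocommutative case $a\o S(a\t)\o S(S(a\t)\t)=a\o\eps(S(a\t))=a$. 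This second route is a legitimate mild variant of the paper's proof, but it is not what your write-up contains: the ``careful step'' you defer, matching the starting expression to the tool that will contract it, is exactly where the entire content of the proof lies.
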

\proof
Let $a\in A$. Since $S$ is antimultiplicative, we have
\[ S^2(a)=S^2(a\o)S(a\t\o)a\t\t=S(a\t\o S(a\o))a\t\t \]
Now, if $A$ is commutative, this equals
\[ S(S(a\o)a\t\o)a\t\t=S(1)a = a \]
by definition of a Hopf coquasigroup. Alternatively, if $A$ is cocommutative, it equals
\[ S(a\o\o S(a\t))a\o\t = S(a\o\t S(a\t))a\o\o = S(1)a = a \]
In either case, $S^2=\id$ as required.
\endproof

\begin{lemma}
Let $A$ be a Hopf coquasigroup such that $S^{-1}$ exists, then the following identities are equivalent for all $a\in A$
\begin{enumerate}
\item $a\o a\t\t\o \tens a\t\o \tens a\t\t\t = a\o\o\o a\o\t \tens a\o\o\t \tens a\t$
\item $a\o\o\o \tens a\o\o\t a\t \tens a\o\t = a\o \tens a\t\o a\t\t\t \tens a\t\t\o$
\item $a\o\o a\t\t \tens a\o\t \tens a\t\o = a\o\o a\t \tens a\o\t\o \tens a\o\t\t$
\end{enumerate}
\end{lemma}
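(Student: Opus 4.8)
The plan is to dualise the proof of Lemma~\ref{Moufang} by reversing all arrows, exactly as Proposition~5.2 was obtained from Proposition~\ref{antipode}. Under this duality the coquasigroup product plays the role of the quasigroup coproduct and vice versa, so that identities (1)--(3) here are the arrow-reversals of identities (1)--(3) of Lemma~\ref{Moufang}, and the hypothesis that $S^{-1}$ exists enters at precisely the analogous places. The one structural difference to keep in view is that, whereas a Hopf quasigroup has non-associative product but coassociative coproduct, here the product associates freely while $\Delta$ does \emph{not} coassociate; hence products may be rebracketed at will, but the nested Sweedler legs must be tracked scrupulously and never tacitly coassociated.

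For the equivalence of (1) and (2) I would apply $S$ to each of the three tensor factors and reverse their order. Antimultiplicativity of $S$ (Proposition~5.2(2)) turns each product into the reversed product needed on the other side, while anticomultiplicativity (Proposition~5.2(3)) repackages the legs $S(a\o),S(a\t)$ as the coproduct $\Delta(Sa)$; stripping the outermost antipode using the bijectivity of $S$ then delivers identity~(2). Every step is reversible, so the same computation gives (2) $\Rightarrow$ (1).

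The substantive directions are (1) $\Rightarrow$ (3) and (3) $\Rightarrow$ (1), dualising the element-substitution argument in Lemma~\ref{Moufang}. There the moves ``replace $v$ by $vw\o$ and $w$ by $S(w\t)$'', ``multiply on the right by $w\t u\t$'', and finally ``replace $f$ by $S^{-1}(h\t f)$'' are honest substitutions; under arrow reversal each becomes a comultiply-and-contract operation assembled from $\Delta$, $m$ and an inserted $S$- or $S^{-1}$-leg, contracted against the four defining relations $(m\tens\id)(S\tens\id\tens\id)(\id\tens\Delta)\Delta=1\tens\id$ and their companions. Concretely, beginning from (1) I would comultiply the appropriate leg, glue in an $S$-leg so that one antipode axiom collapses a pair of factors to a counit, thereby migrating the product from the ``outer'' pattern of (1) to the ``inner'' pattern of (3); the single use of $S^{-1}$ is the dual of the substitution $f\mapsto S^{-1}(h\t f)$ and removes the one leftover antipode leg. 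Reversing these manipulations yields (3) $\Rightarrow$ (1).

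The main obstacle will be bookkeeping rather than ideas. Because $\Delta$ is not coassociative, differently nested legs such as $a\t\t\o$ and $a\t\o\t$ may not be identified, so each application of $\Delta$ and each contraction against an antipode axiom must act on exactly the leg the diagram prescribes; checking that the inserted antipode legs cancel in the right order, and that $S^{-1}$ lands on the intended factor, is where the deeply iterated coproducts make the computation delicate. Organising the whole argument as the mirror image of the string-diagram proof of Lemma~\ref{Moufang} is what keeps it under control.
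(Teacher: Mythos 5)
Your proposal is correct and takes essentially the same route as the paper: the paper's proof consists precisely of the observation that the lemma is dual to Lemma~4.4 and that the proof is obtained by writing that proof in diagrams and reversing all arrows, followed by an explicit Sweedler-notation verification of one implication (that (3) implies (1)), with $S^{-1}$ entering exactly where you place it as the dual of the substitution $f\mapsto S^{-1}(h\t f)$. Your identification of the $(1)\Leftrightarrow(2)$ step via anti(co)multiplicativity and bijectivity of $S$, and your warning about tracking nested legs since $\Delta$ is not coassociative, match the paper's treatment.
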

\proof
This lemma is dual to lemma \ref{Moufang} and the proof can be obtained by writing the proof of \ref{Moufang} as diagrams, and reversing the arrows. We will demonstrate one part here.

Assume (3) holds. By applying this to $a\o$, we find
\begin{eqnarray*}
\lefteqn{a\o\o\o a\o\t\t S(a\t)\t\tens a\o\o\t \tens a\o\t\o S(a\t)\o}\\
	&	=	& a\o\o\o a\o\t S(a\t)\t\tens a\o\o\t\o \tens a\o\o\t\t S(a\t)\o
\end{eqnarray*}
Using that $\Delta$ is an algebra homomorphism and the property that $a\o\o \tens a\o\t S(a\t)$, the LHS equals
\[ a\o\o\o(a\o\t S(a\t)\t \tens a\o\o\t \tens (a\o\t S(a\t))\o = a\o \tens a\t \tens 1\]
So we obtain
\[ a\o\tens a\t\tens 1 = a\o\o\o a\o\t S(a\t\o)\tens a\o\o\t\o \tens a\o\o\t\t S(a\t\t)\]
By applying $S^{-1}$ to the third component, then $\Delta$ and multiplying, we find
\[ a\o a\t\t\o\tens a\t\o \tens a\t\t\t = a\o\o\o a\o\t S(a\t\o)a\t\t\o \tens a\o\o\t \tens a\t\t\t\]
Now we can apply the definition of a Hopf coquasigroup on $a\t$ on the RHS to get
\[ a\o a\t\t\o\tens a\t\o \tens a\t\t\t = a\o\o\o a\o\t \tens a\o\o\t \tens a\t\]
which is identity (1).
\endproof

\begin{lemma}
Let $A$ be a commutative flexible Hopf coquasigroup, then for all $a\in A$,
\[ a\o S(a\t\t) \tens a\t\o = a\o\o S(a\t)\tens a\o\t \]
\end{lemma}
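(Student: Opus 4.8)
The plan is to derive this as the linear dual of Lemma~4.5 (the cocommutative flexible Hopf quasigroup identity $h\o(gS(h\t))=(h\o g)S(h\t)$), in exactly the manner that Proposition~5.2 and Lemma~5.4 were obtained from their Hopf quasigroup counterparts by reversing all arrows. Under this duality products become coproducts and vice versa, the cocommutativity hypothesis becomes the commutativity hypothesis, $S$ is unchanged, and the auxiliary element $g$ of Lemma~4.5 turns into the second output tensor leg. First I would check at the level of diagrams that the two maps $h\o(gS(h\t))$ and $(h\o g)S(h\t)$ dualise precisely to $a\o S(a\t\t)\tens a\t\o$ and $a\o\o S(a\t)\tens a\o\t$; since they do, the present statement is the faithful dual and the proof of Lemma~4.5 can be transcribed step by step. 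A structural asymmetry is worth noting: in Lemma~4.5 the product is nonassociative and flexibility is used on the product side, whereas here the algebra $A$ is genuinely associative and it is the coproduct that fails to be coassociative, so flexibility will now be applied on the coproduct side.

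Next I would dualise the two computations of Lemma~4.5 in which an extra right factor $h\t$ is appended and both sides are shown to equal the common value $hg$. Reversing arrows, the extra multiplication becomes an extra comultiplication and the common value $hg=m(h\tens g)$ becomes $\Delta a=a\o\tens a\t$. The two identities I would establish are
\[ a\o\o S(a\o\t\t)\,a\t \tens a\o\t\o = a\o\tens a\t,\qquad a\o\o\o S(a\o\t)\,a\t\tens a\o\o\t = a\o\tens a\t, \]
where I have freely used associativity of $A$ so that no bracketing of the products is needed. The first is the dual of the step of Lemma~4.5 that used \emph{both} cocommutativity and flexibility: I would prove it using commutativity to reorder the product factors (dual to reordering coproduct legs), the coquasigroup flexibility $a\o a\t\t\tens a\t\o=a\o\o a\t\tens a\o\t$ to regroup the non-coassociative coproduct so that an $S$-leg and its partner become the two legs of a single $\Delta$, and then the Hopf coquasigroup antipode identity $a\o\o\tens S(a\o\t)a\t = a\tens1$ followed by the counit. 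The second identity is the dual of the step that avoided flexibility, and needs only commutativity together with the same antipode identity.

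Finally I would dualise the cancellation that passes from the probed equality to the bare one. In Lemma~4.5 one writes $\mathrm{LHS}(h)=(\mathrm{LHS}(h\o)h\t\o)S(h\t\t)$ via $(xh\o)S(h\t)=x\eps(h)$, replaces the first two legs using the established equality and coassociativity, and folds back. Dually, since both displayed identities equal $a\o\tens a\t$ they equal one another, and I would strip off the extra leg by comultiplying the first leg, inserting $S$, multiplying, and invoking the dual identity $a\o\o\tens a\o\t S(a\t)=a\tens1$ — now using associativity of $A$ in the place where the quasigroup argument used coassociativity — to recover $a\o S(a\t\t)\tens a\t\o=a\o\o S(a\t)\tens a\o\t$. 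I expect the main obstacle to be the first probe identity: this is the only place where non-coassociativity genuinely intervenes, so the single application of flexibility must be inserted at exactly the right node of the iterated coproduct and the commutativity rearrangements tracked carefully through the non-coassociative tree, whereas every other step reduces to associative-algebra manipulation and the antipode axioms.
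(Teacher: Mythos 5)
Your proposal is correct and follows essentially the same route as the paper: the paper's proof establishes precisely your two probe identities (the first by commutativity, flexibility and $a\o\o\tens S(a\o\t)a\t=a\tens 1$; the second from the coquasigroup axioms alone), equates them since both equal $a\o\tens a\t$, and then strips the extra leg by applying the equality to $a\o$, multiplying on the right by $S(a\t)$ and invoking $a\o\o\tens a\o\t S(a\t)=a\tens 1$, with associativity of $A$ doing exactly the job that coassociativity did in Lemma~4.5, as you anticipated. The only cosmetic discrepancies are that the second probe in fact needs no commutativity at all, and your phrase ``comultiplying the first leg'' is more precisely ``applying the identity to $a\o$ and multiplying the first leg on the right by $S(a\t)$''.
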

\proof
By commutativity and flexibility we have
\[ a\o\o S(a\o\t\t)a\t \tens a\o\t\o = a\o S(a\t\o\t)a\t\t\tens a\t\o\o\]
which, equals $a\o\tens a\t$  by definition on $a\t$. But also, by properties of a Hopf coquasigroup,
\[ a\o\o\o S(a\o\t)a\t \tens a\o\o\t = a\o\tens a\t\]
Therefore
\[ a\o\o S(a\o\t\t)a\t \tens a\o\t\o = a\o\o\o S(a\o\t)a\t \tens a\o\o\t\]
By applying this to $a\o$ and multiplying on the right by $S(a\t)$, we obtain
\[ a\o S(a\t\t) \tens a\t\o = a\o\o S(a\t)\tens a\o\t\]
as required.
\endproof

So when $A$ is a commutative flexible Hopf coquasigroup we have a notion of the left (and similarly right) adjoint coaction.

As in the previous theory, we define the coassociator, now by
 \[ (\Delta\tens\id)\Delta(a)=\Phi(a\o)(\id\tens\Delta)\Delta(a\t)\]
For the next proposition, we will use some convenient notation; let $A$ be a Hopf coquasigroup, and $a\in A$. We write $\Phi(a)=\Phi_a\bo\tens\Phi_a\bt\tens\Phi_a\bth$

\begin{proposition} Let $A$ be a Hopf coquasigroup with coassociator $\Phi$. Then, for all $a\in A$,
\begin{enumerate}
\item $\Phi$ always exists as it can be expressed as
\[ \Phi(a)= a\o\o\o S(a\t)\o \tens a\o\o\t S(a\t)\t\o \tens a\o\t S(a\t)\t\t\]
\item $(\eps\tens\id\tens\id)\Phi_a=(\id\tens\eps\tens\id)\Phi_a=(\id\tens\id\tens\eps)\Phi_a=\eps(a).1$
\item $\Phi_a\bo S(\Phi_a\bt)\tens \Phi_a\bth = S(\Phi_a\bo)\Phi_a\bt\tens \Phi_a\bth$\\
$=\Phi_a\bo\tens S(\Phi_a\bt)\Phi_a\bth = \Phi_a\bo\tens \Phi_a\bt S(\Phi_a\bth) $\\
$=\Phi_a\bo\o S(\Phi_a\bth)\tens \Phi_a\bo\t S(\Phi_a\bt) = S(\Phi_a\bo\o)\Phi_a\bth\tens S(\Phi_a\bo\t)\Phi_a\bt$\\
$=S(\Phi_a\bo)\Phi_a\bth\t\tens S(\Phi_a\bt)\Phi_a\bth\o = \Phi_a\bo S(\Phi_a\bth\t)\tens \Phi_a\bt S(\Phi_a\bth\o)$\\
$=S(\Phi_a\bo)\Phi_a\bt\o\tens S(\Phi_a\bt\t)\Phi_a\bth = \Phi_a\bo S(\Phi_a\bt\o)\tens \Phi_a\bt\t S(\Phi_a\bth) = \eps(a).1$
\end{enumerate}
\end{proposition}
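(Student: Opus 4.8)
The plan is to read this proposition as the term-by-term dual of Proposition~4.6: part~(1) dualises the existence formula for $\varphi$, part~(2) dualises $\varphi(1,h,g)=\varphi(h,1,g)=\varphi(h,g,1)=\eps(h)\eps(g).1$, and part~(3) dualises the list of identities with an antipode inserted, such as $\varphi(h\o,S(h\t),g)=\eps(h)\eps(g).1$. Thus, as with Proposition~5.2, a fully rigorous argument can be obtained by drawing the proof of Proposition~4.6 as string diagrams and reversing every arrow, under the dictionary that sends the nonassociative product to the non-coassociative coproduct, associativity of $m$ to coassociativity, and $\eps$ to the unit. What follows is the direct Sweedler-calculus version.

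For part~(1) the first move is to rewrite the proposed formula in the closed form
\[ \Phi(a)=\big[(\Delta\tens\id)\Delta(a\o)\big]\cdot\big[(\id\tens\Delta)\Delta(S(a\t))\big], \]
where the dot is componentwise multiplication in $A^{\tens 3}$; one checks leg by leg that this reproduces $a\o\o\o S(a\t)\o\tens a\o\o\t S(a\t)\t\o\tens a\o\t S(a\t)\t\t$. Substituting into $\Phi(a\o)(\id\tens\Delta)\Delta(a\t)$ and using that $\Delta$, hence $(\id\tens\Delta)\Delta$, is an algebra homomorphism, the two factors carrying $S(a\o\t)$ and $a\t$ fuse into $(\id\tens\Delta)\Delta\big(S(a\o\t)a\t\big)$. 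The whole expression is then the image of $a\o\o\tens S(a\o\t)a\t$ under $\big[(\Delta\tens\id)\Delta\big]\tens\big[(\id\tens\Delta)\Delta\big]$ followed by componentwise multiplication, so the defining axiom $a\o\o\tens S(a\o\t)a\t=a\tens 1$ collapses it to $(\Delta\tens\id)\Delta(a)$, since $(\id\tens\Delta)\Delta(1)=1\tens1\tens1$. This is exactly the dual of the three-line computation in Proposition~4.6(1).

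Part~(2) is the easiest. Applying $\eps$ to the first leg of $\Phi_a$ and using that $\eps$ is an algebra homomorphism together with counitality $(\eps\tens\id)\Delta=\id$, the first tensor factor contracts and leaves $\Delta(a\o)\cdot\Delta(S(a\t))=\Delta\big(a\o S(a\t)\big)$, the dot again being componentwise multiplication; Proposition~5.2(1) then gives $a\o S(a\t)=\eps(a).1$, so the result is $\eps(a).1$. The other two legs contract in the same way, each time reducing to one of the antipode identities of Proposition~5.2(1).

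Part~(3) is where the real work lies, and anticomultiplicativity of $S$ from Proposition~5.2(3), $\Delta S(a)=S(a\t)\tens S(a\o)$, is the essential tool. For each listed identity I would expand $\Phi_a$ by the part~(1) formula, regroup factors using that $\Delta$ (and so $(\id\tens\Delta)\Delta$, $(\Delta\tens\id)\Delta$) is an algebra homomorphism, split every $S$ of a coproduct by anticomultiplicativity so that the Sweedler indices align, and then telescope to $\eps(a).1$ using the four defining Hopf-coquasigroup axioms; each identity matches one entry of Proposition~4.6(3). The main obstacle I anticipate is purely combinatorial: because $\Delta$ is \emph{not} coassociative, coproducts may not be reassociated freely, so the repeated indices must be tracked with care. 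The hardest entries are those containing a product of two antipodes, for instance $\Phi_a\bo\o S(\Phi_a\bth)\tens\Phi_a\bo\t S(\Phi_a\bt)$, where one must apply anticomultiplicativity twice and sequence the axioms correctly before the expression collapses. The safest route to avoid index errors is to carry out the whole verification diagrammatically and invoke the duality with Proposition~4.6.
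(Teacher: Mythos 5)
Your framing and your treatment of parts (1) and (2) coincide with the paper's own proof: the paper likewise declares the proposition dual to Proposition~4.6 and then verifies part (1) by exactly your computation --- substitute the claimed formula into $\Phi(a\o)(\id\tens\Delta)\Delta(a\t)$, fuse the $S(a\o\t)$- and $a\t$-factors using that $\Delta$ (hence $(\id\tens\Delta)\Delta$) is an algebra map, and collapse with the axiom $a\o\o\tens S(a\o\t)a\t=a\tens 1$. Part (2) the paper omits entirely, and your contraction argument for it (counitality plus $a\o S(a\t)=\eps(a).1$ from Proposition~5.2(1)) is correct.

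There is, however, one concrete step in your part (3) recipe that fails as written. You single out anticomultiplicativity, $\Delta S(a)=S(a\t)\tens S(a\o)$ (Proposition~5.2(3)), as ``the essential tool'' and propose to split every $S$ with it. But every leg of $\Phi_a$ is a \emph{product} --- e.g.\ $\Phi_a\bt=a\o\o\t S(a\t)\t\o$ --- so the unavoidable first move in each of the ten identities is to unpack an expression like $S(\Phi_a\bt)=S\bigl(a\o\o\t S(a\t)\t\o\bigr)=S\bigl(S(a\t)\t\o\bigr)S(a\o\o\t)$, and this requires \emph{antimultiplicativity}, $S(xy)=S(y)S(x)$ (Proposition~5.2(2)); anticomultiplicativity cannot touch it. This is precisely the tool the paper flags: its worked example for $\Phi_a\bo S(\Phi_a\bt)\tens\Phi_a\bth$ applies antimultiplicativity, then the Hopf coquasigroup axiom $b\o S(b\t\o)\tens b\t\t=1\tens b$ with $b=S(a\t)$, then Proposition~5.2(1) and counitality to reach $\eps(a).1$. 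The source of your slip is that you transported the key lemma of Proposition~4.6(3) without dualizing its name: anticomultiplicativity for Hopf quasigroups becomes antimultiplicativity under arrow reversal. The repair is trivial --- both properties sit in Proposition~5.2, and your diagrammatic-duality fallback is sound --- but the toolkit you actually list (algebra-homomorphism property, anticomultiplicativity, and the four axioms) cannot even begin any entry of part (3).
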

\proof
This proof is dual to one in the previous section. We will prove (1) and give an example of (3).
Since $\Delta$ is an algebra homomorphism and using properties of a Hopf coquasigroup, we have
\begin{eqnarray*}
\Phi(a\o)(\id\tens\Delta)\Delta(a\t)	&	=	& a\o\o\o\o S(a\o\t)\o a\t\o \tens a\o\o\o\t S(a\o\t)\t\o a\t\t\o\\
	&		&	\tens a\o\o\t S(a\o\t)\t\t a\t\t\t\\
	&	=	& a\o\o\o\o (S(a\o\t)a\t)\o\tens a\o\o\o\t (S(a\o\t)a\t)\t\o \\
	&		& \tens a\o\o\t (S(a\o\t)a\t)\t\t\\
	&	=	& a\o\o\tens a\o\t \tens a\t\\
	&	=	& (\Delta\tens\id)\Delta(a)
\end{eqnarray*}
We will prove that $\Phi_a\bo S(\Phi_a\bt)\tens \Phi_a\bth=\eps(a).1$
\begin{eqnarray*}
\Phi_a\bo S(\Phi_a\bt)\tens \Phi_a\bth	&	=	& a\o\o\o S(a\t)\o S(a\o\o\t S(a\t)\t\o)\tens a\o\t S(a\t)\t\t \\
	&	=	&	a\o\o\o S(a\t)\o S(S(a\t)\t\o)S(a\o\o\t) \tens a\o\t S(a\t)\t\t\\
	&	=	& a\o\o\o S(a\o\o\t)\tens a\o\t S(a\t)\\
	&	=	& \eps(a\o\o) \tens a\o\t S(a\t)\\
	&	=	& 1\tens a\o S(a\t)\\
	&	=	& \eps(a).1
\end{eqnarray*}
Where we have used that $\Delta$ is an algebra homomorphism, $S$ is anti-multiplicative, and the identities of a Hopf coquasigroup. The rest are similar.
\endproof

Clearly as $\Gcal_n$ is a finite quasigroup, $k\Gcal_n$ is a finite-dimensional Hopf quasigroup algebra and hence its dual $k[\Gcal_n]$ of functions on $\Gcal_n$ with pointwise multiplication is a Hopf coquasigroup. However, the dual theory is more powerful and also allows `coordinate algebra' versions of infinite-dimensional quasigroups $\Gcal$ as we now demonstrate. Specifically, we consider the algebra $k[S^{2^n-1}]$ of functions on the spheres $S^{2^n-1}\subset k_FG$, where $G=\Z_2^n$. This algebra is generated by the functions
\[ x_a(u)=u_a\]
that pick out the value of the $a$-th coordinate of $u=\sum_a u_a e_a$. More precisely, $k[S^{2^n-1}]$ is defined to be the (commutative) polynomial algebra $k[x_a:a\in\Z_2^n]$ with relations $\sum_a x_a^2=1$.

\begin{proposition} $A=k[S^{2^n-1}]$ is Hopf coquasigroup  with coproduct $\Delta x_c=\sum_{a+b=c}x_a\tens x_b F(a,b)$, counit $\eps x_a=\delta_{a,0}$, and antipode $Sx_a=x_aF(a,a)$. The coassociator is
\begin{eqnarray*}
\Phi(x_d)	& = & \sum_{a+b+c+a'+b'+c'=d} x_ax_{a'}\tens x_bx_{b'}\tens x_cx_{c'}F(a',a')F(b',b')F(c',c')\\
	&		& F(a,b)F(a+b,c)F(c',b')F(b'+c',a')F(a+b+c,a'+b'+c')
\end{eqnarray*}
 \end{proposition}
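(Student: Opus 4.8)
The plan is to recognise $A=k[S^{2^n-1}]$ as the ``transpose'' of the quasigroup $S^{2^n-1}\subset k_F\Z_2^n$ established in Proposition~3.6, so that $\Delta$, $\eps$ and $S$ are exactly the maps dual to the quasigroup product, identity and inversion. Reading $\Delta x_c$ as the function $(u,v)\mapsto x_c(uv)=(uv)_c=\sum_{a+b=c}u_av_bF(a,b)$ reproduces the stated coproduct, $\eps x_a=x_a(e_0)=\delta_{a,0}$ reproduces the counit, and $Sx_a=x_a(u^{-1})=F(a,a)x_a$ (using the inverse from Lemma~2.2) reproduces the antipode. The first task is well-definedness: extending $\Delta,\eps$ as algebra homomorphisms and $S$ as an (anti)homomorphism, I must check they respect the relation $\sum_a x_a^2=1$. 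For $\Delta$ this is precisely multiplicativity of the norm, i.e. $\sum_c(\Delta x_c)^2=(\sum_a x_a^2)\tens(\sum_b x_b^2)$, which is the composition property encoded in (\ref{square})--(\ref{quad3}); for $S$ it is $\sum_a F(a,a)^2x_a^2=\sum_a x_a^2$ using $F(a,a)^2=1$; and for $\eps$ it is immediate. Counitality then follows at once from $F(0,c)=F(c,0)=1$.

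The substantive step is the four antipode axioms of Definition~5.1, which I would verify on the generators $x_d$. Each reduces to a combinatorial identity in $F$ that is the transpose of the corresponding cancellation already carried out in Proposition~3.6. For instance, the first axiom asks that $(m\tens\id)(S\tens\id\tens\id)(\id\tens\Delta)\Delta(x_d)=1\tens x_d$; expanding the iterated coproduct gives
\[ \sum_{a+b+c=d}F(a,a)F(a,b+c)F(b,c)\,x_ax_b\tens x_c, \]
and I would split the sum into the diagonal $a=b$ and the off-diagonal $a\ne b$. On the diagonal one has $a+b=0$, $c=d$, and $F(a,a)F(a,a+d)F(a,d)=1$ by (\ref{canr}), (\ref{square}) and (\ref{Fxx}), contributing $(\sum_a x_a^2)\tens x_d=1\tens x_d$; the off-diagonal terms cancel in pairs $(a,b)\leftrightarrow(b,a)$ by commutativity of $A$ together with the antisymmetry $F(a,b)=-F(b,a)$ for $a,b,a+b\ne0$, exactly as the off-diagonal terms cancelled in the computation of $u^{-1}(uv)=v$. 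The remaining three axioms are the transposes of the other two-sided inverse identities and follow by the same diagonal-plus-pairing argument, invoking (\ref{quad2})--(\ref{quad3}) where a central antipode is involved.

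Finally I would read off $\Phi$ from the universal formula of Proposition~5.6(1), namely $\Phi(a)=a\o\o\o S(a\t)\o\tens a\o\o\t S(a\t)\t\o\tens a\o\t S(a\t)\t\t$, by substituting $a=x_d$, the explicit coproduct, and $Sx_a=F(a,a)x_a$. The left block $a\o\o\o\tens a\o\o\t\tens a\o\t$ produces the left-associated coproduct weight $F(a,b)F(a+b,c)$ on unprimed variables $a,b,c$, while $S(a\t)$ contributes the three primed variables $a',b',c'$ carrying the antipode factors $F(a',a')F(b',b')F(c',c')$ and, by anticomultiplicativity of $S$ (Proposition~5.2(3)), the order-reversed weight $F(c',b')F(b'+c',a')$; the outermost coproduct joining the two blocks supplies $F(a+b+c,a'+b'+c')$. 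Collecting these yields the stated expression for $\Phi(x_d)$. I expect the main obstacle to be purely the index bookkeeping in this last step---tracking which $F$-weights arise from the left-associated coproduct versus the order-reversing antipode coproduct---rather than any conceptual difficulty, since the nontrivial input (the quasigroup identities and the composition property) is already supplied by Proposition~3.6 and (\ref{square})--(\ref{Fxx}).
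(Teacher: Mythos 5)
Your proposal is correct and takes essentially the same route as the paper: the Hopf coquasigroup axioms are verified on the generators by the same diagonal-plus-pairwise-cancellation argument transposed from Proposition~3.6 (note only that in your direct form of the coefficient, $F(a,a)F(a,b+c)F(b,c)$, the off-diagonal cancellation rests on (\ref{quad3}), equivalently the symmetry of $\phi$ from Lemma~\ref{phi} combined with antisymmetry of $F$, not on antisymmetry alone), and $\Phi(x_d)$ is read off from the universal formula of Proposition~5.6(1) with exactly the bookkeeping you describe, the primed factors $F(a',a')F(b',b')F(c',c')F(c',b')F(b'+c',a')$ arising from anticomultiplicativity of $S$. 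Your explicit check that $\Delta$, $\eps$, $S$ respect the relation $\sum_a x_a^2=1$ (via the composition property) is a worthwhile step that the paper leaves implicit.
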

\proof
By construction, it is clear that $\Phi$ satisfies $(\Delta\tens\id)\Delta(x_d)=\Phi(x_d)(\id\tens\Delta)\Delta(x_d)$.
To show that $k[S^{2^n-1}]$ is a Hopf coquasigroup, we will prove only that $(m\tens \id)(S\tens\id\tens\id)(\id\tens\Delta)\Delta=1\tens\id$, the others are similar. The LHS give us
\[ \sum_{a+b+c=d} x_ax_b\tens x_c F(a,a)\phi(a,b,c)F(a+b,c)F(a,b)\]
Consider the case when $a=b$ so $c=d$, this gives
\[ \sum_a x_a^2\tens x_d F(a,a)\phi(a,a,d)F(0,d)F(a,a)=\sum_a x_a^2\tens x_d = 1\tens x_d\]
Now consider the case when $a\ne b$. We claim that the term with given values for $a$ and $b$ cancels with the term with $a'=b, b'=a$. These give, respectively,
\[ x_ax_b\tens x_c F(a,a)\phi(a,b,c)F(a+b,c)F(a,b)\]
\[ x_bx_a\tens x_c F(b,b)\phi(b,a,c)F(b+a,c)F(b,a)\]
When $a=0$ and hence $b\ne 0$, these become
\[ x_0x_b\tens x_c F(b,c)\]
\[ x_bx_0\tens x_c F(b,b)F(b,c)=-x_0x_b\tens x_c F(b,c)\]
which cancel. When $a,b\ne 0$, these become
\[ -x_ax_b\tens x_c \phi(a,b,c)F(a+b,c)F(a,b)\]
\[ -x_bx_a\tens x_c \phi(b,a,c)F(b+a,c)F(b,a)=x_ax_b\tens x_c \phi(a,b,c)F(a+b,c)F(a,b)\]
which also clearly cancel. Hence,
\[ \sum_{a+b+c=d} x_ax_b\tens x_c F(a,a)\phi(a,b,c)F(a+b,c)F(a,b)=1\tens x_d\]
as required. \endproof

It is not obvious and a nice check that all the other properties above follow, eg $S$ is antimultiplicative etc.

\begin{proposition} $k[S^{2^n-1}]$ is a Moufang Hopf coquasigroup, and hence flexible and alternative.
\end{proposition}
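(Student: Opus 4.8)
The plan is to verify the Moufang coidentity
\[ a\o a\t\t\o \tens a\t\o \tens a\t\t\t = a\o\o\o a\o\t \tens a\o\o\t \tens a\t \]
on $A=k[S^{2^n-1}]$ and then to read off flexibility and alternativity from it. First I would observe that, since $A$ is commutative, both sides of this equation are algebra homomorphisms $A\to A\tens A\tens A$: each is the composite of an iterated coproduct (an algebra map, as $\Delta$ is a homomorphism) with the map $x\tens y\tens z\tens w\mapsto xz\tens y\tens w$, and this last map is an algebra homomorphism precisely because $A$ is commutative (it is a permutation of tensor legs followed by $m\tens\id\tens\id$). The left side uses the right-bracketed fourfold coproduct and the right side the left-bracketed one, but both then multiply the first and third legs. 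Hence it suffices to check the Moufang coidentity on the algebra generators $x_d$.

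Next I would expand both sides on $x_d$ using the coproduct $\Delta x_c=\sum_{a+b=c}F(a,b)x_a\tens x_b$ of the preceding proposition, obtaining
\[ \sum_{p+b+q+f=d} F(p,b+q+f)F(b,q+f)F(q,f)\; x_px_q\tens x_b\tens x_f \]
on the left and
\[ \sum_{p+b+q+f=d} F(p+b+q,f)F(p+b,q)F(p,b)\; x_px_q\tens x_b\tens x_f \]
on the right. The scalar multiplying $x_px_q\tens x_b\tens x_f$ on the left is exactly the coefficient of $e_d$ in $e_p(e_b(e_qe_f))$, and on the right the coefficient of $e_d$ in $((e_pe_b)e_q)e_f$, computed in $k_FG$. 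Since $A$ is commutative, $x_px_q=x_qx_p$, so collecting the coefficients of the monomial $x_px_q\tens x_b\tens x_f$ (summing the $(p,q)$ and $(q,p)$ contributions) reduces the whole identity to the statement, for all $p,q,b,f$,
\[ e_p(e_b(e_qe_f))+e_q(e_b(e_pe_f)) = ((e_pe_b)e_q)e_f+((e_qe_b)e_p)e_f \]
in $k_FG$ (both sides being scalar multiples of $e_{p+b+q+f}$).

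The crucial point is that this last identity is nothing but the full polarization in $x$ of the Moufang identity $x(a(xz))=((xa)x)z$: setting $y=x$ recovers twice the Moufang identity, and over a field of characteristic not $2$ an identity quadratic in $x$ vanishes identically iff its associated symmetric bilinear form does. Thus I would invoke that $k_FG$, being a composition algebra, is alternative and hence obeys $x(a(xz))=((xa)x)z$ for all $x,a,z$; polarizing gives the displayed identity for all $x,y,a,z$, in particular for the basis vectors $e_p,e_q,e_b,e_f$. This establishes the Moufang coidentity on generators, hence on all of $A$. \emph{The main obstacle is the bookkeeping of the two iterated coproducts and the realisation that commutativity forces the symmetrized (polarized) form of the Moufang identity rather than a term-by-term one}: the naive term-by-term identity $F(p,b+q+f)F(b,q+f)F(q,f)=F(p+b+q,f)F(p+b,q)F(p,b)$ fails, since it would assert associativity of $k_FG$ (equivalently coassociativity of $\Delta$), and it is exactly the polarization that makes the associators $\phi$ cancel.

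Finally, flexibility and alternativity follow from the Moufang coidentity by applying the counit $\eps$ to a tensor leg, exactly as ``Moufang implies flexible and alternative'' classically. Because $A$ is commutative we have $S^2=\id$, so $S^{-1}=S$ exists and the three equivalent forms of the coidentity are available. Applying $\id\tens\eps\tens\id$ to the Moufang coidentity yields $a\o a\t\o\tens a\t\t=a\o\o a\o\t\tens a\t$ (the first alternative condition), applying $\id\tens\id\tens\eps$ yields $a\o a\t\t\tens a\t\o=a\o\o a\t\tens a\o\t$ (flexibility), and the remaining alternative condition comes the same way from another of the equivalent Moufang forms.
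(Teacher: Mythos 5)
Your proof is correct, but it takes a genuinely different route from the paper's. The paper also works on generators (leaving implicit the algebra-homomorphism argument that you spell out), expands both sides into the same two sums over $a+b+c+d=f$, and then verifies equality by a two-page case analysis organised by linear (in)dependence of the indices over $\Z_2$, using only the identities for $F$, $\phi$ and $\Rcal$ from Lemma~2.1; its repeated device of pairing the $(a,c)$ term with the $(c,a)$ term under commutativity of the generators is exactly your polarization, but carried out by hand in each case. You instead recognise the two coefficient systems as the bracketing coefficients of $e_p(e_b(e_qe_f))$ and $((e_pe_b)e_q)e_f$ in $k_FG$, so that the coidentity on $k[S^{2^n-1}]$ becomes precisely the polarized Moufang identity in $k_FG$, which you then obtain from alternativity of $k_FG$. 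This is shorter and more conceptual, and it makes transparent why commutativity is indispensable (term-by-term equality would force associativity of $k_FG$); but it outsources the real content to two external facts: that $k_FG$ with the composition property is alternative (which is in \cite{Ma99}, and which the paper itself invokes later, in the proof of Proposition~6.10) and the classical theorem that alternative rings satisfy the Moufang identities --- whereas the paper's computation is self-contained given Section~2. Two small points: the direction of polarization you actually need (Moufang identity at $e_p+e_q$ minus the diagonal terms) requires no hypothesis on the characteristic, so the characteristic-not-2 remark is only needed for the converse you don't use; and your explicit derivation of ``hence flexible and alternative'' --- applying $\id\tens\eps\tens\id$ and $\id\tens\id\tens\eps$ to the coidentity, and getting the remaining alternative law from form (2) of Lemma~5.4, which is available since $S^{-1}=S$ by commutativity and Proposition~5.3 --- is a welcome addition, as the paper asserts this consequence without proof.
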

\proof Consider a generator $x_f\in k[S^{2^n-1}]$ and the two expressions
\[ (1)\quad  x_f\o x_f\t\t\o\tens x_f\t\o\tens x_f\t\t\t = \sum_{a+b+c+d=f} x_ax_c\tens x_b\tens x_d  F(c,d)F(b,c+d)F(a,b+c+d) \]
\[ (2)\quad  x_f\o\o\o x_f\o\t\tens x_f\o\o\t\tens x_f\t = \sum_{a+b+c+d=f} x_ax_c\tens x_b\tens x_d F(a,b)F(a+b,c)F(a+b+c,d). \]
Then $k[S^{2^n-1}]$ is Moufang if (1)=(2). We will consider the different possible cases.

\textbf{Case 1:} $f=0$\\
Since $a+b+c+d=0$, $\phi(a+b,c,d)=\phi(a,b,c+d)=1$, so we have
\begin{eqnarray*}
(2) & =	& \sum_{a+b+c+d=0} x_ax_c\tens x_b\tens x_d F(a,b)F(a+b,c)F(a+b+c,d) \\
	&	=	& \sum_{a+b+c+d=0} x_ax_c\tens x_b\tens x_d \phi(a+b,c,d)\phi(a,b,c+d) F(c,d)F(b,c+d)F(a,b+c+d) \\
	&	=	& \sum_{a+b+c+d=f} x_ax_c\tens x_b\tens x_d  F(c,d)F(b,c+d)F(a,b+c+d)\\
	&	=	& (1)
\end{eqnarray*}

\textbf{Case 2:} $f\ne 0$\\
\textbf{Part expression:} we look at the terms where $a=0$ and $c=0$ in the sums. We claim that if $a=0$ in (1), this equals the sum when $c=0$ in (2). Assume $b,c,d$ linearly independent so that $R(a,b)=\phi(a,b,c)=-1$, and consider $c=0$ in (2);
\begin{eqnarray*}
(2)	&	=	& \sum_{a+b+d=f} x_ax_0\tens x_b\tens x_d F(a,b)F(a+b,d)\\
	&	=	& \sum_{b+a+d=f} x_0x_a\tens x_b\tens x_d F(a,b)F(b+a,d)\\
	&	=	& -\sum_{b+a+d=f} x_0x_a\tens x_b\tens x_d F(b,a)F(b+a,d)\\
	&	=	& -\sum_{b+c+d=f} x_0x_{c}\tens x_{b}\tens x_{d} F(b,c)F(b+c,d) \quad \text{by relabeling}\\
	&	=	& -\sum_{b+c+d=f} x_0x_{c}\tens x_{b}\tens x_{d}\phi(b,c,d)F(c,d)F(b,c+d)\\
	&	=	& \sum_{b+c+d=f} x_0x_{c}\tens x_{b}\tens x_{d}F(c,d)F(b,c+d)\\
	&	=	& (1) \quad \text{with $a=0$}
\end{eqnarray*}

Similarly, $c=0$ in (1) equals the sum when $a=0$ in (2).

\textbf{Part expression:} terms where $b=0$. We have 
\[ (1)  = \sum_{a+c+d=f} x_ax_c\tens x_0\tens x_d  F(c,d)F(a,c+d) \]
\[ (2)  = \sum_{a+c+d=f} x_ax_c\tens x_0\tens x_d F(a,c)F(a+c,d) \]
If $a,c,d$ are linearly dependent, then $\phi(a,c,d)=1$, so clearly (1)=(2). Suppose $a,c,d$ are linearly independent, then using commutativity of the generators
\begin{eqnarray*}
(2)	&	=	& \sum_{a+c+d=f} x_ax_c\tens x_0\tens x_d F(a,c)F(a+c,d)\\
	&	=	& - \sum_{c+a+d=f} x_cx_a\tens x_0\tens x_d F(c,a)F(c+a,d)\\
	&	=	&	- \sum_{a+c+d=f} x_ax_c\tens x_0\tens x_d F(a,c)F(a+c,d)\\
	&	=	& - \sum_{a+c+d=f} x_ax_c\tens x_0\tens x_d \phi(a,c,d)F(c,d)F(a,c+d) \\
	&	=	& \sum_{a+c+d=f} x_ax_c\tens x_0\tens x_dF(c,d)F(a,c+d)\\
	&	=	& (1)
\end{eqnarray*}
The second equality uses $R(a,c)=-1$ since $a,c$ linearly independent, the third equality comes from relabeling, and the fifth uses linear independence of $a,c,d$. Relabeling gives (1).

\textbf{Part expression:} terms where $d=0$. We have
\[ (1) = \sum_{a+b+c=f} x_ax_c\tens x_b\tens x_0  F(b,c)F(a,b+c) \]
\[ (2) = \sum_{a+b+c=f} x_ax_c\tens x_b\tens x_0  F(a,b)F(a+b,c) \]
If $a,b,c$ are linearly dependent then $\phi(a,b,c)=1$, so clearly (1)=(2). Suppose $a,b,c$ are linearly independent, then $R(a,b)=R(a+b,c)=-1$ so we have,
\begin{eqnarray*}
(2)	&	=	& \sum_{a+b+c=f} x_ax_c\tens x_b\tens x_0  F(a,b)F(a+b,c)\\
	&	=	& \sum_{c+b+a=f} x_cx_a\tens x_b\tens x_0 F(b,a)F(c,b+a)\\
	&	=	& \sum_{a+b+c=f} x_ax_c\tens x_b\tens x_0 F(b,c)F(a,b+c)\\
	&	=	& (1)
\end{eqnarray*}
Relabeling gives (1).

\textbf{Part expression:} terms where $a+b=0,a+c=0,a+d=0,b+c=0,b+d=0,c+d=0$. We have
\[ (1) = \sum_{a+b+c+d=f} x_ax_c\tens x_b\tens x_d  F(c,d)F(b,c+d)F(a,b+c+d) \]
\[ (2) = \sum_{a+b+c+d=f} x_ax_c\tens x_b\tens x_d \phi(a+b,c,d)\phi(a,b,c+d)F(c,d)F(b,c+d)F(a,b+c+d) \]
If any two of the variables are equal, $\phi(a+b,c,d)=\phi(a,b,c+d)=1$ by the symmetric property of $\phi$ and linear dependence of the variables. Hence (1)=(2) in each of these cases.

\textbf{Part expression:} terms where $a+b+c=0$. 
Note that this means $d=f$, and we assume $a,b,c\ne 0$ as these cases are done. We also assume$a,b,c\ne f$ and $a+c+f\ne 0$ as these each imply $a+d,b+d,c+d=0$, which we have already covered.
\[ (1) = \sum_{a,c} x_ax_c\tens x_{a+c}\tens x_f F(c,f)F(a+c,c+f)F(a,a+f) \]
\[ (2) = \sum_{a,c} x_ax_c\tens x_{a+c}\tens x_f F(a,a+c)F(c,c) = \sum_{a,c} x_ax_c\tens x_{a+c}\tens x_f F(a,c) \]
Now, using $\phi(a,c,f)=-1$ and $R(a,c)=-1$, we find
\begin{eqnarray*}
(1)	&	=	& \sum_{a,c} x_ax_c\tens x_{a+c}\tens x_f F(c,f)F(a+c,c+f)F(a,a+f)\\
	&	=	& \sum_{a,c} x_ax_c\tens x_{a+c}\tens x_f F(c,c+f)F(a+c,c+f)F(a,f)\\
	&	=	& \sum_{a,c} x_ax_c\tens x_{a+c}\tens x_f \phi(a,c,c+f)F(a,c)\\
	&	=	& -\sum_{a,c} x_ax_c\tens x_{a+c}\tens x_f F(a,c)\\
	&	=	& -\sum_{c,a} x_cx_a\tens x_{c+a}\tens x_f F(a,c)\\
	&	=	& -\sum_{a,c} x_ax_c\tens x_{a+c}\tens x_f F(c,a) \quad \text{by relabeling}\\
	&	=	& \sum_{a,c} x_ax_c\tens x_{a+c}\tens x_f F(a,c)\\
	&	=	& (2)
\end{eqnarray*}

\textbf{Part expression:} terms where $a+c+d=0$. 
As in the previous subcase, $b=f\ne0$, and we can assume $a,c,d\ne0$ and that $a,c,f$ are linearly independent. Using $R(a,f)=-1$ we find
\begin{eqnarray*}
(1)	&	=	& \sum_{a,c} x_ax_c\tens x_f\tens x_{a+c} F(c,a+c)F(f,a)F(a,f+a)\\
	&	=	& \sum_{a,c} x_ax_c\tens x_f\tens x_{a+c} F(c,a)F(f,a)F(a,f)\\
	&	=	& \sum_{a,c} x_ax_c\tens x_f\tens x_{a+c} F(c,a)R(a,f)\\
	&	=	& -\sum_{a,c} x_ax_c\tens x_f\tens x_{a+c} F(c,a)
\end{eqnarray*}
Now, using $\phi(f,a,c)=-1$ since $a,f,c$ are linearly independent, and relabeling we find
\begin{eqnarray*}
(2)	&	=	& \sum_{a,c}x_ax_c\tens x_f\tens x_{a+c} F(a,f)F(a+f,c)F(a+f+c,a+c)\\
	&	=	& \sum_{a,c}x_ax_c\tens x_f\tens x_{a+c} F(f,a)F(f+a,c)F(f,a+c)\\
	&	=	& \sum_{a,c}x_ax_c\tens x_f\tens x_{a+c} \phi(f,a,c)F(a,c)\\
	&	=	& -\sum_{a,c}x_ax_c\tens x_f\tens x_{a+c} F(a,c)\\
	&	=	& -\sum_{c,a}x_cx_a\tens x_f\tens x_{c+a} F(a,c)\\
	&	=	& -\sum_{a,c}x_ax_c\tens x_f\tens x_{a+c} F(c,a) \quad \text{by relabeling}\\
	&	=	& (1)
\end{eqnarray*}

\textbf{Part expression:} terms where $a+b+d=0$ and $b+c+d=0$.  We claim that the sum with $a+b+d=0$ in (1) equals the sum with $b+c+d=0$ in (2). Let us consider (2) with $d=b+c$ and so $a=f$. We can assume $b+c\ne 0$ and $f,b,c$ are linearly independent, as these cases have been proven.
\begin{eqnarray*}
(2)	&	=	& \sum_{b,c} x_fx_c\tens x_b\tens x_{b+c} F(f,b)F(f+b,c)F(f+b+c,b+c)\\
	&	=	& -\sum_{b,c} x_fx_c\tens x_b\tens x_{b+c} F(f,b)F(f+b,c)F(f,b+c)\\
	&	=	& -\sum_{b,c} x_fx_c\tens x_b\tens x_{b+c} \phi(f,b,c)F(b,c)\\
	&	=	& \sum_{b,c} x_fx_c\tens x_b\tens x_{b+c} F(b,c)\\
	&	=	& \sum_{a,b} x_fx_a\tens x_b\tens x_{a+b} F(b,a)\quad \text{by relabeling}\\
	&	=	& -\sum_{a,b} x_ax_f\tens x_b\tens x_{a+b} \phi(b,a,f)F(b,a)\\
	&	=	& -\sum_{a,b} x_ax_f\tens x_b\tens x_{a+b} F(b+a,f)F(b,a+f)F(a,f)\\
	&	=	& \sum_{a,b} x_ax_f\tens x_b\tens x_{a+b} F(f,b+a)F(b,f+a)F(a,f)\\
	&	=	& \sum_{a,b} x_ax_f\tens x_b\tens x_{a+b} F(f,b+a)F(b,f+a+b)F(a,f+a)\\
	&	=	& (1) \quad \text{with $a+b+d=0,c=f$}
\end{eqnarray*}

Similarly, the case with $b+c+d=0$ in (1) equals the case with $a+b+d=0$ in (2).

\textbf{Part expression:} Terms where $a,b,c,d$ are linearly independent.  
In this case $\phi(a+b,c,d)=\phi(a,b,c+d)=-1$, hence
\begin{eqnarray*}
(2) &	=	& \sum_{a+b+c+d=f} x_ax_c\tens x_b\tens x_d F(a,b)F(a+b,c)F(a+b+c,d)\\
	&	=	& \sum_{a+b+c+d=f} x_ax_c\tens x_b\tens x_d\phi(a+b,c,d)\phi(a,b,c+d)F(c,d)F(b,c+d)F(a,b+c+d)\\
	&	=	& \sum_{a+b+c+d=f} x_ax_c\tens x_b\tens x_dF(c,d)F(b,c+d)F(a,b+c+d)\\
	&	=	& (1)
\end{eqnarray*}
\endproof

\begin{remark} Although $\Phi$ has rather a large number of terms we can differentiate it at the identity $(1,0,0,0,0,0,0,0)$ by setting $x_ix_j=0$ if $i,j>0$ to obtain
\[ \Phi_*(x_d) = \sum_{a+b+c=d} (\phi(a,b,c)-1)F(b,c)F(a,b+c) x_a\tens x_b\tens x_c. \]
This is essentially adjoint to the map $(x,y,z)=(xy)z-x(yz)$ on the underlying quasialgebra $k_FG$, which on basis elements takes the form
\[ (e_a,e_b,e_c)=(\phi(a,b,c)-1)F(b,c)F(a,b+c)e_{a+b+c}\]
\end{remark}

\begin{proposition}
Let $A$ be a Hopf coquasigroup equipped with an action of a group $G$. Thus there is a linear action of $G$ on $A$ such that $\sigma.(ab)=(\sigma.a)(\sigma.b)$, $\sigma.1=1$, $(\sigma\tens\sigma).\Delta a=\Delta(\sigma.a)$, $\eps(\sigma.a)=\eps(a)$ and $(\sigma\sigma').a=\sigma.(\sigma'.a)$ for all $\sigma,\sigma'\in G$ and $a,b\in A$. The cross product algebra $A\rtimes G$ is again a Hopf coquasigroup and is Moufang if $A$ is.
\end{proposition}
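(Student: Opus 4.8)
The plan is to build $A\rtimes G$ on $A\tens kG$ with the smash--product multiplication $(a\tens\sigma)(b\tens\sigma')=a(\sigma.b)\tens\sigma\sigma'$, unit $1\tens e$, tensor--product coproduct $\Delta(a\tens\sigma)=(a\o\tens\sigma)\tens(a\t\tens\sigma)$ (using $\Delta\sigma=\sigma\tens\sigma$ on $kG$), counit $\eps(a\tens\sigma)=\eps(a)$, and antipode $S(a\tens\sigma)=\sigma^{-1}.S(a)\tens\sigma^{-1}$, the last being forced by antimultiplicativity (Proposition~5.2) together with $a\tens\sigma=(a\tens e)(1\tens\sigma)$. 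First I would dispatch the purely algebraic and coalgebraic axioms, all of which are routine. Associativity and unitality follow from associativity of $A$ and the action axioms $\sigma.(ab)=(\sigma.a)(\sigma.b)$ and $(\sigma\sigma').a=\sigma.(\sigma'.a)$; that $\Delta,\eps$ are counital algebra homomorphisms follows because $kG$ is a cocommutative Hopf algebra and $(\sigma\tens\sigma).\Delta a=\Delta(\sigma.a)$ is exactly what makes the two expansions of $\Delta((a\tens\sigma)(b\tens\sigma'))$ agree. Crucially this needs \emph{no} cocommutativity of $A$ (the $kG$ leg supplies it), so we are in the analogue of Proposition~4.10 rather than Proposition~4.16.

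Next I would verify the four Hopf coquasigroup antipode identities, each of which reduces directly to the corresponding identity for $A$. For instance, expanding $(m\tens\id)(S\tens\id\tens\id)(\id\tens\Delta)\Delta(a\tens\sigma)$ produces in the first leg the factor $(\sigma^{-1}.S(a\o))(\sigma^{-1}.a\t\o)=\sigma^{-1}.(S(a\o)a\t\o)$, so the $A$--identity $S(a\o)a\t\o\tens a\t\t=1\tens a$ collapses everything to $(1\tens e)\tens(a\tens\sigma)$, the group parts contributing only a harmless $\sigma^{-1}\sigma=e$. The other three are handled identically using $\sigma\sigma^{-1}=e$ where needed. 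It is worth noting that, unlike the quasigroup--algebra case of Proposition~4.10, this step does \emph{not} require the lemma $\sigma.S(a)=S(\sigma.a)$; the explicit form of $S$ and the composition property of the action suffice. That lemma does hold here, by the usual argument that $\sigma$ is an algebra and coalgebra automorphism, and I will want it for the Moufang step.

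The substantive part is the Moufang claim. Writing $X=a\tens\sigma$ and using that every coproduct leg of $X$ carries the same grouplike $\sigma$, both sides of $X\o X\t\t\o\tens X\t\o\tens X\t\t\t = X\o\o\o X\o\t\tens X\o\o\t\tens X\t$ have identical $kG$--components (namely $\sigma^2,\sigma,\sigma$ in the three legs), so equality reduces to the identity in $A^{\tens 3}$
\[ a\o(\sigma.a\t\t\o)\tens a\t\o\tens a\t\t\t = a\o\o\o(\sigma.a\o\t)\tens a\o\o\t\tens a\t, \]
which is the Moufang identity of $A$ with a single $\sigma$--action inserted on the right--hand factor of the (unique) product. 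The main obstacle is precisely this twist: $A$'s Moufang identity equates the two sides only \emph{after} the leg--$1$ multiplication, whereas here $\sigma$ is applied to one factor before multiplying, and the pre--multiplication four--fold tensors on the two sides are genuinely different, so the twisted identity does not follow by any naive leg--wise operation. Using that $\sigma$ is an algebra and coalgebra map one can relocate the twist onto the leading factor (replacing $a\o(\sigma.a\t\t\o)$ by $\sigma.((\sigma^{-1}.a\o)a\t\t\o)$ and compensating on the remaining legs), but this only moves the lone twist from the right factor to the left factor and does not eliminate it.

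To finish I would argue conceptually rather than componentwise: since $\sigma$ commutes with $\Delta$ and with $S$, it commutes with the coassociator, $\Phi(\sigma.a)=(\sigma\tens\sigma\tens\sigma)\Phi(a)$ (Proposition~5.6(1)), so the entire obstruction to coassociativity of $A$ is $G$--equivariant and forming the smash product creates no new obstruction. Concretely I expect the cleanest route is to verify whichever of the equivalent Moufang forms of Lemma~5.4 is best adapted to the position of the twist, using that a Moufang $A$ is flexible and alternative, and to track the single $\sigma$ through the computation exactly as in the direct verifications of Proposition~5.8 and Proposition~4.12. The careful bookkeeping of this one $\sigma$--twist, and checking it is absorbed correctly by the module--algebra axioms, is where essentially all the work lies; the remainder is formal.
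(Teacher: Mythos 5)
Your construction of the structure maps, your check that $\Delta,\eps$ are algebra maps (indeed no cocommutativity of $A$ is needed, the grouplike leg supplies it), and your reduction of the four antipode identities to those of $A$ are correct and coincide with the paper's own proof; your side remark is also right that this part needs only the explicit form of $S$ and the action axioms, because there the twist $\sigma^{-1}$ hits \emph{both} factors of $S(a\o)a\t\o$, so it factors through $\sigma^{-1}.(xy)=(\sigma^{-1}.x)(\sigma^{-1}.y)$ and the $A$-identity is applied only to the already-multiplied element.

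The genuine gap is the Moufang step, and it sits exactly where you located it — but it cannot be closed, by Lemma~5.4 bookkeeping, by equivariance of $\Phi$, or by anything else, because the twisted identity you isolate,
\[
a\o(\sigma.a\t\t\o)\tens a\t\o\tens a\t\t\t \;=\; a\o\o\o(\sigma.a\o\t)\tens a\o\o\t\tens a\t ,
\]
is \emph{strictly stronger} than the Moufang identity of $A$ and actually fails in the paper's own Example~5.11. (Equivariance of the coassociator is of no use: $\Phi$ measures non-coassociativity only after the leg-one multiplication, whereas here $\sigma$ is inserted before it.) Concretely take $A=k[S^7]$, $G=\Z_2^3$, $\sigma_g.x_b=(-1)^{g\cdot b}x_b$. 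On a generator $x_f$ the difference of the two sides of the twisted identity is
\[
\sum_{a+b+c+d=f}(-1)^{g\cdot c}\left(F(c,d)F(b,c+d)F(a,b+c+d)-F(a,b)F(a+b,c)F(a+b+c,d)\right)x_ax_c\tens x_b\tens x_d .
\]
Project onto a monomial $x_ax_c\tens x_b\tens x_0$ with $a,b,c$ linearly independent over $\Z_2$: only the tuples $(a,b,c,0)$ and $(c,b,a,0)$ contribute, and since $\phi(a,b,c)=-1$ their brackets are $2F(b,c)F(a,b+c)$ and $2F(b,a)F(c,a+b)=-2F(b,c)F(a,b+c)$ respectively. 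Their cancellation (using $x_ax_c=x_cx_a$) is precisely how the untwisted identity of Proposition~5.8 holds; with the twist the two orderings carry different signs and the coefficient becomes
\[
2F(b,c)F(a,b+c)\left((-1)^{g\cdot c}-(-1)^{g\cdot a}\right),
\]
which is nonzero whenever $g\cdot(a+c)=1$, e.g.\ $a=(1,0,0)$, $b=(0,1,0)$, $c=(0,0,1)$, $g=(1,0,0)$, $f=(1,1,1)$. Hence $k[S^7]\rtimes k\Z_2^3$ is not Moufang. Note also that the paper's own proof of this step makes exactly the inference you warned against: after ``Since $A$ is Moufang'' it replaces $(a\o\tens\sigma)(a\t\t\o\tens\sigma)\tens\cdots$ by $(a\o\o\o\tens\sigma)(a\o\t\tens\sigma)\tens\cdots$, i.e.\ it inserts the twist leg-wise into an identity of $A$ that holds only after multiplication. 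So your diagnosis of the obstruction is correct, your proposed repair cannot succeed, and the Moufang half of the Proposition is false as stated (it survives only in the degenerate case where $A$ is coassociative, when $A\rtimes kG$ is an honest Hopf algebra); only the ``Hopf coquasigroup'' half is actually established, by the argument you and the paper share.
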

\proof 
The algebra is a standard cross product construction to give an associative algebra $A\rtimes kG$. We define the tensor product $\Delta,\eps$ and check that these are algebra homomorphisms just as in the Hopf quasigroup case. We then straightforwardly verify the coquasigroup identities, for example,
\begin{eqnarray*}
S((a\tens\sigma)\o)(a\tens\sigma)\t\o\tens(a\tens\sigma)\t\t	&	=	& S(a\o\tens\sigma)(a\t\o\tens\sigma)\tens(a\t\t\tens\sigma)\\
	&	=	& (\sigma^{-1}.S(a\o)\tens\sigma^{-1})(a\t\o\tens\sigma)\tens(a\t\t\tens\sigma)\\
	&	=	& (\sigma^{-1}.S(a\o))(\sigma^{-1}.a\t\o)\tens\sigma^{-1}\sigma\tens(a\t\t\tens\sigma)\\
	&	=	& (\sigma^{-1}.(S(a\o)a\t\o)\tens 1)\tens(a\t\t\tens\sigma)\\
	&	=	& (1\tens 1)\tens(a\tens\sigma)
\end{eqnarray*}
Finally, let $A$ be a Moufang Hopf coquasigroup, and $G$ be a group acting on $A$, then $A\rtimes kG$ is a Moufang Hopf coquasigroup. This is straightforward to see; we consider
\[ (a\tens\sigma)\o(a\tens\sigma)\t\t\o \tens (a\tens\sigma)\t\o \tens (a\tens\sigma)\t\t\t\]
By definition of the coproduct, this equals
\[ (a\o\tens\sigma)(a\t\t\o\tens\sigma)\tens (a\t\o\tens\sigma)\tens(a\t\t\t\tens\sigma)\]
Since $A$ is Moufang, and again using the definition of the coproduct, we see this equals
\[ (a\tens\sigma)\o\o\o(a\tens\sigma)\o\t \tens (a\tens\sigma)\o\o\t \tens (a\tens\sigma)\t\]
and hence $A\rtimes kG$ is Moufang.
\endproof

\begin{example}
The Hopf coquasigroup $k[S^{2^n-1}]$ has an action of $G=\Z_2^n$ by
\[ \sigma_a.x_b=(-1)^{a\cdot b} x_b\]
and the resulting cross product $k[S^{2^n-1}]\rtimes \Z_2^n$ is a noncommutative Hopf coquasigroup.
\end{example}

The action is adjoint to the one in Example \ref{cross} in the last section. One can verify all the required properties. The resulting associative algebra has generators $x_a$ and  $\sigma_{001},\sigma_{010}$ and $\sigma_{100}$ with relations
\[ x_a\sigma_{001}=(-1)^{a_1}\sigma_{001}x_a,\quad x_a\sigma_{010}=(-1)^{a_2}\sigma_{010}x_a,\quad x_a\sigma_{100}=(-1)^{a_3}\sigma_{100}x_a.\]
This is therefore the first example of a `quantum' Hopf coquasigroup, which we believe to be the first of many.

Clearly Proposition~5.10 hints at further general constructions at the level of Hopf coquasigroups. Here we limit ourselves, for completeness, to  the dual constructions to those for Hopf quasigroups at the end of the last section.

\begin{definition}
Let $A$ be a Hopf coquasigroup. A vector space $V$ is a \textit{right $A$-comodule} if there is a linear map $\beta:V\to V\tens A$ written as $\beta(v)=v\bo\tens v\bt$ such that 
\[ v\bo\bo\tens v\bo\bt\tens v\bt = v\bo\tens v\bt\o\tens v\bt\t, \quad v\bo\eps(v\bt)=v\]
for all $v\in V$. An algebra $H$ is an \textit{$A$-comodule algebra} if further
\[\beta(hg)=\beta(h)\beta(g), \quad \beta(1)=1\tens 1\]
for all $h,g\in H$. Finally, a coalgebra  (not necessarily coassociative) $C$ is an \textit{$A$-comodule coalgebra} if
\[c\bo\o\tens c\bo\t\tens c\bt=c\o\bo\tens c\t\bo\tens c\o\bt c\t\bt, \quad \eps(c\bo)c\bt=\eps(c)\]
for all $c\in C$.
Therefore we have the notion of a \textit{right $A$-comodule Hopf coquasigroup}; we can similarly define left actions of Hopf coquasigroups.
\end{definition}

\begin{lemma}
If $C$ is a right $H$-comodule algebra and a right $A$-comodule coalgebra, then the coaction commutes with the antipode, that is
\[ S(c\bo)\tens c\bt = S(c)\bo\tens S(c)\bt \]
for all $c\in C$.
\end{lemma}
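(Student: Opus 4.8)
The statement is the linear dual of the Section~4 lemma $h.S(a)=S(h.a)$, under the duality in which an $H$-module Hopf quasigroup dualises to an $A$-comodule Hopf coquasigroup $C$ (with $A=H^*$ coacting and $C$ the dual of the module, $S=S_C$); accordingly my plan is to dualise that proof by reversing all arrows. Under this dualisation the product of the module becomes the (non-coassociative) coproduct $\Delta$ of $C$, the coproduct of $H$ becomes the product of $A$, the action $\alpha$ becomes the coaction $\beta$, the module-algebra axiom $h.(ab)=(h\o.a)(h\t.b)$ dualises to the comodule-coalgebra identity, the module-coalgebra axiom dualises to the comodule-algebra identity $\beta(cc')=\beta(c)\beta(c')$, and the two Hopf quasigroup axioms on the module dualise to those on $C$. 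Thus it suffices to dualise the single element that drives the module proof.

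That proof hinges on evaluating $E=S(h\o.a\o)\big((h\t.a\t)(h\th.S(a\th))\big)$ in two ways: collapsing the inner bracket by the module-algebra axiom and the antipode gives $E=S(h.a)$, whereas reading the whole bracket as $S(b\o)(b\t g)$ with $b=h.a$, $g=h\th.S(a\th)$ and applying the first Hopf quasigroup identity gives $E=h.S(a)$. I would introduce the dual element
\[ E^*=S(c\o)\bo\, c\t\o\bo\, S(c\t\t\bo)\ \tens\ S(c\o)\bt\, c\t\o\bt\, c\t\t\bt\ \in C\tens A, \]
where the three coactions are applied to the legs of $(\id\tens\Delta)\Delta(c)=c\o\tens c\t\o\tens c\t\t$ and the two tensor slots are products in $C$ and in $A$ respectively.

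I would then evaluate $E^*$ in two ways mirroring the two evaluations of $E$. First, using the comodule-coalgebra identity to reassemble the coactions on the $C$-legs together with Proposition~5.2(1) for $C$ in the form $c\o S(c\t)=\eps(c)1$ to collapse, I expect to obtain $S(c)\bo\tens S(c)\bt=\beta(S(c))$ (the dual of $E=S(h.a)$). Secondly, using the comodule-algebra identity $\beta(cc')=\beta(c)\beta(c')$ to regroup and the defining Hopf coquasigroup identity $S(c\o)c\t\o\tens c\t\t=1\tens c$ to collapse the outer wrapper, I expect to obtain $S(c\bo)\tens c\bt$ (the dual of $E=h.S(a)$). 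Since both are evaluations of the one element $E^*$, they are equal, which is the asserted identity.

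The main obstacle is precisely the bookkeeping that in the module proof handled the non-associativity of the product, now appearing as the non-coassociativity of $\Delta$ on $C$: the bracketing in $(\id\tens\Delta)\Delta(c)$ is not interchangeable with $(\Delta\tens\id)\Delta(c)$, so every regrouping in the two evaluations must be justified by one of the structural identities (comodule-algebra, comodule-coalgebra, or a Hopf coquasigroup axiom) rather than by coassociativity. In particular the step dual to applying $S(b\o)(b\t g)=\eps(b)g$ with the spectator leg $g=h\th.S(a\th)$ must be rendered as applying $S(c\o)c\t\o\tens c\t\t=1\tens c$ to the first two legs while carrying the third coaction untouched, and checking that the surviving counit contractions rebuild $c$ correctly is the delicate point to verify.
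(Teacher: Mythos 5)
Your proposal is correct and is essentially the paper's own argument: the paper likewise dualises the Section~4 module lemma, showing that $c\o\bo S(c\t)\bo\tens c\o\bt S(c\t)\bt$ and $c\o\bo S(c\t\bo)\tens c\o\bt c\t\bt$ both equal $\eps(c)\,1\tens 1$ (via the comodule-algebra axiom, the comodule-coalgebra axiom, the identity $m(\id\tens S)\Delta=1.\eps$ on $C$, and the counit axioms), and then extracts the stated identity by applying the resulting equation to $c\t$ and left-multiplying by $\beta(S(c\o))$ --- which produces precisely your element $E^*$, evaluated by your Way~2 on one side and your Way~1 on the other. Your single-element, two-evaluation packaging simply makes explicit the extraction step that the paper compresses into ``from which we can obtain the required identity'' (and lets you skip the paper's first display), so it is a streamlined rewrite of the same proof rather than a genuinely different route.
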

\proof
Using the property of the coaction on an algebra we have
\[ c\o\bo S(c\t)\bo\tens c\o\bt S(c\t)\bt = (c\o S(c\t))\bo\tens (c\o S(c\t)\bt = \eps(c).1 \]
Now using the property of a coaction on a coalgebra we obtain
\[ c\o\bo S(c\t\bo)\tens c\o\bt c\t\bt = c\bo\o S(c\bo\t)\tens c\bt = \eps(c\bo)c\bt = \eps(c).1\]
So we have
\[ c\o\bo S(c\t)\bo\tens c\o\bt S(c\t)\bt = c\o\bo S(c\t\bo)\tens c\o\bt c\t\bt\]
from which we can obtain the required identity.
\endproof

\begin{proposition}
Let $A$ be a Hopf coquasigroup and let $C$ be a right $A$-comodule Hopf coquasigroup. There is a right cross coproduct  Hopf coquasigroup $A\rcocross C$ built on $A\tens C$ with tensor product algebra and counit and
\[ \Delta(a\tens c)=a\o\tens c\o\bo\tens a\t c\o\bt\tens c\t\]
\[ S(a\tens c)=S(ac\bt)\tens S(c\bo)\]
for all $a\in A,c\in C$. 
\end{proposition}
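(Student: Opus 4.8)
The plan is to treat $A\rcocross C$ as the exact dual of the cross product Hopf quasigroup of Proposition~4.14, so the verification splits into showing that $A\tens C$ with the tensor product algebra and the stated $\Delta,\eps$ is a counital (not necessarily coassociative) coalgebra with $\Delta$ an algebra map, and that the stated $S$ obeys the four antipode identities. Throughout I write $\beta(c)=c\bo\tens c\bt$ and use that $C$ is at once an $A$-comodule algebra, an $A$-comodule coalgebra and a Hopf coquasigroup, together with anti(co)multiplicativity of $S$ from Proposition~5.2 and the preceding lemma $S(c\bo)\tens c\bt=S(c)\bo\tens S(c)\bt$ that the coaction commutes with $S$.

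First I would check counitality. Feeding $\Delta(a\tens c)=a\o\tens c\o\bo\tens a\t c\o\bt\tens c\t$ into $\id\tens\eps$ and using that $\eps$ is an algebra map on $A$ leaves $a\o\eps(a\t)\tens c\o\bo\eps(c\o\bt)\eps(c\t)$, which the comodule counit $v\bo\eps(v\bt)=v$ applied to $c\o$ collapses to $a\tens c$. Feeding it into $\eps\tens\id$ instead produces the scalar $\eps(c\o\bo)c\o\bt$, which the comodule-coalgebra counit axiom $\eps(c\bo)c\bt=\eps(c)$ turns into $\eps(c\o).1$, again recovering $a\tens c$. This step is routine.

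Next, and this is the crux, I would verify that $\Delta$ is an algebra homomorphism for the tensor product algebra $(a\tens c)(b\tens d)=ab\tens cd$. Expanding $\Delta(ab\tens cd)$ using that $\Delta$ is multiplicative on $A$ and on $C$ and that $\beta$ is multiplicative (comodule algebra) gives $a\o b\o\tens c\o\bo d\o\bo\tens(a\t b\t)(c\o\bt d\o\bt)\tens c\t d\t$, whereas $\Delta(a\tens c)\Delta(b\tens d)$ yields the same expression but with $(a\t c\o\bt)(b\t d\o\bt)$ in the third leg. By associativity of $A$ these coincide exactly when $b\t$ and $c\o\bt$ commute, so the two sides agree precisely when $A$ is commutative. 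This is the literal dual of the sentence ``these are equal if $H$ is cocommutative'' in the proof of Proposition~4.14, and it pins down commutativity of $A$---the mirror of cocommutativity of $H$---as the hypothesis genuinely in force; I would state this explicitly, after which the algebra-map property holds.

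Finally I would verify the antipode identities, say $(\id\tens m)(\id\tens S\tens\id)(\Delta\tens\id)\Delta(a\tens c)=(a\tens c)\tens(1\tens 1)$ together with its three companions, the latter obtained by the symmetric and dual manipulations used in Proposition~5.2. Each reduces to a Sweedler bookkeeping computation: after applying the relevant $(\Delta\tens\id)\Delta$ or $(\id\tens\Delta)\Delta$, the $A$-legs are reassembled through $S(xy)=(Sy)(Sx)$ and the Hopf coquasigroup identities of $A$, while the $C$-legs are reassembled through the comodule-coalgebra compatibility $c\bo\o\tens c\bo\t\tens c\bt=c\o\bo\tens c\t\bo\tens c\o\bt c\t\bt$, the lemma that $S$ commutes with $\beta$, and the Hopf coquasigroup identities of $C$; the whole expression then telescopes to the required right-hand side. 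I expect the main obstacle to be purely organisational---tracking the nested coproducts-of-coactions correctly through a long calculation---since beyond commutativity of $A$, anti(co)multiplicativity of $S$, and the two comodule compatibilities no new idea is needed.
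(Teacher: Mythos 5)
Your proposal is correct and follows essentially the same route as the paper: the paper also treats $A\rcocross C$ as the dual of the cross product of Proposition~4.14, disposes of the algebra-map property of $\Delta$ with the remark that it is checked ``in the same way as for Hopf quasigroups'', and then verifies one antipode identity by precisely the Sweedler computation you outline --- using antimultiplicativity of $S$, the Hopf coquasigroup identities of $A$ (applied to $a$ and then to $c\o\bt$), the comodule axiom, the lemma that $S$ commutes with the coaction, the comodule-algebra property, and finally the Hopf coquasigroup identity of $C$ --- declaring the remaining three identities ``similar''. The one place you go beyond the paper is worth keeping: as you observe, $\Delta((a\tens c)(b\tens d))$ and $\Delta(a\tens c)\Delta(b\tens d)$ differ exactly by transposing $b\t$ past $c\o\bt$, so the construction genuinely needs $A$ commutative (or at least the second leg of the coaction central in $A$); this hypothesis --- the exact dual of the cocommutativity assumed in Proposition~4.14 --- is absent from the statement as printed, though the proof's appeal to ``the same way as for Hopf quasigroups'' tacitly relies on it. Two small caveats: ``precisely when $A$ is commutative'' is a slight overstatement, since for a fixed comodule $C$ centrality in $A$ of the image of $\beta$ suffices; and a complete write-up would execute all four antipode identities rather than sketching them, though the paper itself records only one and calls the others similar.
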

\proof
We check that $\Delta$ is an algebra homomorphism in the same way as for Hopf quasigroups; next we check the coquasigroup identities. Now we are ready to check the coquasigroup identities.
\begin{eqnarray*}
\lefteqn{S((a\tens c)\o)(a\tens c)\t\o\tens (a\tens c)\t\t = }\\
	&	=	& (S(a\o c\o\bo\bt)\tens S(c\o\bo\bo))(a\t\o c\o\bt\o\tens c\t\o\bo)\tens (a\t\t c\o\bt\t c\t\o\bt\tens c\t\t)\\
	&	=	& (S(c\o\bo\bt)S(a\o)a\t\o c\o\bt\o\tens S(c\o\bo\bo)c\t\o\bo) \tens (a\t\t c\o\bt\t c\t\o\bt\tens c\t\t)\\
	&	=	& (S(c\o\bo\bt)c\o\bt\o\tens S(c\o\bo\bo)c\t\o\bo) \tens (ac\o\bt\t c\t\o\bt\tens c\t\t)\\
	&	=	& (S(c\o\bt\o)c\o\bt\t\o\tens S(c\o\bo)c\t\o\bo)\tens (ac\o\bt\t\t c\t\o\bt\tens c\t\t)\\
	&	=	& (1\tens S(c\o\bo)c\t\o\bo)\tens (ac\o\bt c\t\o\bt\tens c\t\t)\\
	&	=	& (1\tens S(c\o)\bo c\t\o\bo)\tens (a S(c\o)\bt c\t\o\bt\tens c\t\t)\\
	&	=	& (1\tens (S(c\o)c\t\o)\bo)\tens (a(S(c\o)c\t\o)\bt\tens c\t\t)\\
	&	=	& (1\tens 1)\tens (a\tens c)
\end{eqnarray*}
The fourth equality uses the definition of a coaction on a vector space on $c\o$. The fifth and seventh equalities use the coquasigroup identities on $c\o\bt$ and $c$ respectively. The sixth equality uses the property of the antipode commuting with the coaction. The other identities are similar.
\endproof

\section{Differential Calculus on Hopf Coquasigroups and $k[S^7]$}

Let $A$ be an associative algebra. As usual, we define an $A$-module and an $A$-bimodule in the usual way, with commuting left and right actions written multiplicatively. As in most approaches to noncommutative geometry we define differential structures by specifying the bimodule of 1-forms.

\begin{definition} A \textit{first order differential calculus} over $A$ is a pair $(\Omega^1,\extd)$ such that\\
1) $\Omega^1$ is an $A$-bimodule\\
2) $\extd:A\to A$ is a linear map satisfying
\[ \extd(ab)=\extd(a)b+a\extd(b)\]
3) $\Omega^1={\rm span}\{a\extd b\, |\, a,b\in A\}$
\end{definition}

 The universal calculus, $\Omega^1_{univ}$ is defined in the usual way as the kernel of the multiplication map with $\extd a=1\tens a-a\tens 1$. When $A$ is a Hopf coquasigroup, the algebra structure is an associative algebra and so the above definition  makes sense, however we would like some form of `translation invariance' with respect to the quasigroup multiplication expressed in the coproduct. Left invariance  is effected for ordinary Hopf algebras by the `left Maurer-Cartan form' $\omega$ and in the Hopf coquasigroup case we take this as the definition. 

\begin{definition}
Let $A$ be a Hopf coquasigroup. A first order differential calculus $\Omega^1$ over $A$ is \textit{left covariant} if it is a free left $A$-module over $\im(\omega)$, i.e.
\[ \Omega^1=A.\im(\omega),\]
where $\omega:A^+\to\Omega^1$ is defined by
\[ \omega(a)=(Sa\o)\extd a\t \]
\end{definition}

We can extend the definition of $\omega$ to $A$ by $a=\eps(a)+(a-\eps(a))$ (the counit projection) and $\omega(1)=0$. Then clearly, $a\o\omega(a\t)=a\o Sa\t\o \extd a\t\t=\extd a$ by the Hopf coquasigroup identities allows us to  recover  the calculus from knowledge of the Maurer-Cartan form. We similarly define  $\Omega^1$ to be  \textit{right covariant} if 
\[ \Omega^1=\im(\omega_R).A, \quad  \omega_R(a) =( \extd a\o)S a\t \]
with respect to a right-handed Maurer Cartan form. The calculus is bicovariant if both of these hold. The universal calculus is bicovariant.

\begin{lemma} Let $A$ be a Hopf coquasigroup and $A^+=\rm{ker}\eps$ the augmentation ideal. Then $\Omega^1_{univ}\iso A\tens A^+$ via the left Maurer-Cartan form. 
\end{lemma}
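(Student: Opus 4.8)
The plan is to exhibit the isomorphism $\Omega^1_{univ}\cong A\tens A^+$ explicitly by writing down the map induced by the left Maurer-Cartan form together with its inverse, and then checking that both are well-defined $A$-bimodule maps (or at least left $A$-module maps) that are mutually inverse. Recall that $\Omega^1_{univ}=\ker(m:A\tens A\to A)$ with $\extd a=1\tens a-a\tens 1$, and $\omega(a)=(Sa\o)\extd a\t$. Since $\Omega^1_{univ}$ is a free left $A$-module, the natural candidate for the isomorphism sends $a\tens \xi\in A\tens A^+$ to $a\,\omega(\xi)\in\Omega^1_{univ}$.

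First I would define the map $\theta:A\tens A^+\to \Omega^1_{univ}$ by $\theta(a\tens\xi)=a\,\omega(\xi)=a(S\xi\o)\tens\xi\t - a(S\xi\o)\xi\t\tens 1$, and verify that its image genuinely lies in $\ker m$; this is immediate since $m\circ\omega=0$ (as $\omega(\xi)=(S\xi\o)(1\tens\xi\t - \xi\t\tens 1)$ multiplies to $(S\xi\o)\xi\t - (S\xi\o)\xi\t = 0$ by counitality, noting $\xi\in A^+$ so $\eps\xi=0$). Next I would define the candidate inverse $\psi:\Omega^1_{univ}\to A\tens A^+$. For a general element $\sum b^i\tens c^i\in\ker m$ (so $\sum b^i c^i=0$), the natural formula is $\psi(b\tens c)=b c\o\tens (c\t - \eps(c\t)1) = bc\o\tens c\t - bc\tens 1$; since $\sum b^i c^i=0$ the second term vanishes on $\ker m$, giving $\psi(\sum b^i\tens c^i)=\sum b^i c^i\o\tens c^i\t$, and I would check this lands in $A\tens A^+$ by applying $\id\tens\eps$ and using $\sum b^i c^i=0$ again.

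The main computation is to verify $\theta\circ\psi=\id$ and $\psi\circ\theta=\id$, and this is where the \emph{non-coassociativity} of $\Delta$ becomes the crux. The composite $\psi\circ\theta$ applied to $a\tens\xi$ produces expressions of the form $a(S\xi\o)\xi\t\o\tens\xi\t\t$ (after reassembling), and collapsing this back to $a\tens\xi$ requires precisely the \emph{first} Hopf coquasigroup axiom $(m\tens\id)(S\tens\id\tens\id)(\id\tens\Delta)\Delta = 1\tens\id$, i.e. $(S a\o)(a\t\o)\tens a\t\t=1\tens a$. The delicate point is that one must apply $\Delta$ to $\xi$ in the \emph{non-associated} grouping $(\id\tens\Delta)\Delta$ matching the axiom, rather than the coassociative regrouping one would use for an honest Hopf algebra; so I would be careful to track which bracketing of iterated coproducts appears and invoke the correct one of the four defining identities. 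The reverse composite $\theta\circ\psi$ on $\ker m$ reduces, after using $\sum b^i c^i=0$, to recovering $\sum b^i\tens c^i$ via the counit, and is more routine.

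I expect the \textbf{main obstacle} to be bookkeeping the coproduct bracketings correctly: because $\Delta$ is not coassociative, the Sweedler indices $a\o\o,a\o\t,a\t$ versus $a\o,a\t\o,a\t\t$ denote genuinely different elements, and the whole proof hinges on feeding the Hopf coquasigroup antipode axioms in exactly the grouping in which they are stated. Once the correct identity $(Sa\o)a\t\o\tens a\t\t=1\tens a$ (and its mirror) is matched to the right bracketing, the verification that $\theta$ and $\psi$ are mutually inverse left $A$-module maps goes through, establishing $\Omega^1_{univ}\iso A\tens A^+$.
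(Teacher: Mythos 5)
Your proposal is correct and is essentially the paper's own proof: the paper defines $r(a\tens b)=ab\o\tens b\t$ and $r^{-1}(a\tens b)=aS(b\o)\tens b\t=a\omega(b)$ (your $\psi$ and $\theta$), checks they land in the right spaces by applying $\id\tens\eps$ and $m$ respectively, and verifies they are mutually inverse using exactly the Hopf coquasigroup identities $S(b\o)b\t\o\tens b\t\t=1\tens b$ and $b\o S(b\t\o)\tens b\t\t=1\tens b$ in the bracketing you correctly identify as the crux. The only slight imprecision is your remark that $\theta\circ\psi$ collapses ``via the counit'' --- that direction in fact requires the mirror antipode identity $b\o S(b\t\o)\tens b\t\t=1\tens b$, as you yourself acknowledge in your closing sentence, so nothing essential is missing.
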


\proof
Define $r:\Omega^1_{univ}\to A\tens A^+$ by $a\tens b\mapsto ab\o\tens b\t$. We can check that the RHS lies in $A\tens A^+$ by applying $(\id\tens\eps)$.
\[(\id\tens\eps)r(a\tens b)=(\id\tens\eps)(ab\o\tens b\t)=ab\o\eps(b\t)=ab=0\]
since $a\tens b\in\Omega^1_{univ}$. Hence $r(a\tens b)\in A\tens A^+$. The inverse map is $r^{-1}(a\tens b)=aS(b\o)\tens b\t=a\omega(b)$ provided by the left Maurer-Cartan form for the universal calculus. One can also see directly that it lies in $\Omega^1_{univ}$ by
\[mr^{-1}(a\tens b)=m(aS(b\o)\tens b\t)=aS(b\o)b\t=a\eps(b)=0\]
since $b\in A^+$. To show these maps are mutually inverse we require the defining properties of a Hopf coquasigroup; we find,
\begin{eqnarray*}
rr^{-1}(a\tens b)	&=	&r(aS(b\o)\tens b\t) =	 aS(b\o)b\t\o\tens b\t\t =a\tens b
\\
r^{-1}r(a\tens b)	&=	&r^{-1}(ab\o\tens b\t) =	ab\o S(b\t\o)\tens b\t\t 
									=	a\tens b.
\end{eqnarray*}
using the Hopf coquasigroup identities.
\endproof

\begin{theorem}
Let $A$ be a Hopf coquasigroup. Left covariant first order calculi over $A$ are in 1-1 correspondence with right ideals $I\subset A^+$ of $A$.
\end{theorem}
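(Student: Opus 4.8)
The plan is to mimic the classical Hopf-algebra correspondence between left-covariant first order calculi and right ideals of $A^+$ (due to Woronowicz), adapting it to the Hopf coquasigroup setting. The previous Lemma identifies $\Omega^1_{univ}\iso A\tens A^+$ via the left Maurer-Cartan form, and I would use this as the base object: every left covariant calculus should arise as a quotient of the universal one. So the correspondence will send a right ideal $I\subset A^+$ to the calculus $\Omega^1=\Omega^1_{univ}/(A.\,\omega(I))$, where $\omega(I)$ sits inside $\im(\omega)\iso A^+$ under the isomorphism of the Lemma. Conversely, given a left covariant calculus $\Omega^1$, the surjection $\Omega^1_{univ}\twoheadrightarrow\Omega^1$ of bimodules restricts on $\im(\omega)\iso A^+$ to give a subspace $I=\{b\in A^+\ |\ \omega(b)=0 \text{ in } \Omega^1\}$, and I must show $I$ is a right ideal.

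First I would set up the two maps carefully. In one direction, given a right ideal $I\subset A^+$, form $N=A.\,\omega(I)\subset\Omega^1_{univ}$ and set $\Omega^1=\Omega^1_{univ}/N$ with $\extd$ and the left action induced by projection. The key point is that $N$ must be a sub-\emph{bimodule}, not merely a left submodule, so that $\Omega^1$ inherits a right action; this is exactly where the right-ideal hypothesis on $I$ is used. In the other direction, given $\Omega^1$ with projection $\pi:\Omega^1_{univ}\to\Omega^1$, transport $\ker\pi$ across the isomorphism $r$ of the Lemma to recover $I$, and verify that the freeness condition $\Omega^1=A.\,\im(\omega)$ guarantees $\ker\pi=A.\,\omega(I)$ for this $I$. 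Then I would check the two constructions are mutually inverse, which is essentially formal once the bimodule structures match up.

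The main obstacle, and the step that genuinely differs from the Hopf algebra case, is showing that $I$ being a right ideal is precisely the condition for $A.\,\omega(I)$ to be a sub-\emph{bimodule}. In the associative Hopf algebra proof one computes the right action $\omega(b).a = \omega(ba\t)\eps(a\o) - \text{(correction)}$ using coassociativity; here $\Delta$ is \emph{not} coassociative, so the analogous formula for $(Sb\o)\extd b\t$ acted on the right by $a$ will pick up associator-type corrections. I would compute $\omega(b)\cdot a$ explicitly inside $\Omega^1_{univ}$ using $\extd(b\t a)=(\extd b\t)a+b\t\extd a$ and the defining Hopf coquasigroup identities (rather than coassociativity), and track exactly which terms land in $A.\,\omega(A^+)$; the claim to verify is that $\omega(b).a$ lies in $A.\,\omega(\,ba'\,)$-type expressions modulo $A.\,\omega(I)$ whenever $b\in I$. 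Provided the non-coassociativity corrections themselves lie in $A.\,\omega(A^+)$ (which should follow because $A$ is an associative algebra even though $\Delta$ is not coassociative, so $\Omega^1_{univ}$ is an honest bimodule), the right-ideal condition closes the argument. I expect the verification that these correction terms are absorbed to be the only delicate calculation; the rest of the correspondence is bookkeeping transported through the Lemma.
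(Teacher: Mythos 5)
Your proposal takes essentially the same route as the paper's proof: the same correspondence in both directions ($I\mapsto \Omega^1_{univ}/A.\omega(I)$, built as $N=r^{-1}(A\tens I)$, and $\Omega^1\mapsto \ker\omega$), the same reliance on the lemma $\Omega^1_{univ}\iso A\tens A^+$, the same identification of the sub-bimodule condition with the right-ideal condition, and the same use of freeness to see the constructions are mutually inverse. The one thing worth noting is that the ``associator-type corrections'' you anticipate never materialise: the defining Hopf coquasigroup identities make the classical formula $\omega(b)\cdot a = a_{(1)}\,\omega(b a_{(2)})$ hold on the nose (in the paper this is the observation that $r(\xi\cdot x)=r(\xi)\Delta(x)$, which needs only that $\Delta$ is an algebra map, with the coquasigroup identities used to invert $r$), so the step you flag as delicate is in fact immediate.
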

\proof
Let $\Omega^1$ be a left covariant first order differential calculi over $A$, then $\Omega^1=A.\im\omega$, where $\omega:A^+\to \Omega^1$ is defined as
\[ \omega(a)=S(a\o)\extd(a\t) \]
for each $a\in A^+$. Define $I=\ker\omega$. Then $I$ is a right ideal of $A$: let $x\in I,a\in A$, then
\begin{eqnarray*}
\omega(xa)	&	=	& S(x\o a\o)\extd(x\t a\t)\\
	&	=	& S(a\o)S(x\o)x\t \extd(a\t) + S(a\o)S(x\o)\extd(x\t)a\t \\
	&	=	& S(a\o)\eps(x) \extd(a\t) + S(a\o)\omega(x)a\t\\
	&	=	& 0
\end{eqnarray*}
since $x\in I\subset A^+$.
We note that $A.\im(\omega) \iso A\tens \im(\omega)$ by the product, and $\im(\omega)\iso A^+/I$, hence $\Omega^1\iso A\tens A^+/I$.

Conversely, given a right ideal $I'\subset A^+$ of $A$, define $N=r^{-1}(A\tens I')$. Then $N$ is a sub-bimodule of $\Omega^1_{univ}$: let $a,x\in A$ and $b\in I'$
\begin{eqnarray*}
r(r^{-1}(a\tens b)(1\tens x))	&	=	& r(r^{-1}(a\tens b))\Delta(b)\\
	&	=	& (a\tens b)\Delta(b) \quad\in A\tens I'
\end{eqnarray*}
since $I'$ is a right ideal. Therefore $r^{-1}(a\tens b)(1\tens x)\in N$. Similarly,
\begin{eqnarray*}
r((x\tens 1)r^{-1}(a\tens b))	&	=	& (x\tens 1)r(r^{-1}(a\tens b)\\
	&	=	& (x\tens 1)(a\tens b) \quad \in  A\tens I'
\end{eqnarray*}
so $(x\tens 1)r^{-1}(a\tens b)\in N$, as required. Therefore $\Omega^1=\Omega^1_{univ}/N$ is a FODC over $A$. It remains to check that $\Omega^1$ is left covariant. By the previous lemma, $\Omega^1_{univ}\iso A\tens A^+$, and there are canonical projections $\Omega^1_{univ}\to\Omega^1_{univ}/N$ and $A\tens A^+\to A\tens A^+/I'$. Therefore, we have an isomorphism $A\tens A^+/I'\iso\Omega^1$ sending $a\tens b\to a\omega(b)$, where $\omega$ is the left Maurer-Cartan form on $\Omega^1$. Thus $\Omega^1\iso A\tens A^+/I'\iso A\tens\im(\omega)$, and $\Omega^1$ is left covariant.

These processes are mutually inverse; let $\pi:\Omega^1_{univ}\to\Omega^1$ be the canonical projection sending $a\tens b\to a\extd b$, then $\omega(x)=\pi r^{-1}(1\tens x)$ and it is clear that $I'=I$.
\endproof

Similarly, right covariant first order calculi over $A$ are in 1-1 correspondence with left ideals in $A^+$. Bicovariant calculi correspond to a compatible pair of ideals or to right ideals (say) with further properties.  Also, any calculus in degree 1 can be extended to higher degree although not uniquely. Here $\extd$ extends as a graded derivation with $\extd^2=0$. 

\begin{proposition}\label{mceqn} Let $\Omega^1$ be a left-covariant differential on a Hopf coquasigroup $A$ and $\Omega^2$ any extension to degree 2. The left Maurer-Cartan form obeys
\[ \extd \omega(a)+(Sa\o)a\t\o\o \omega(a\t\o\t)\omega(a\t\t)=0,\quad \forall a\in A^+.\]
\end{proposition}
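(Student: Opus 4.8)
The plan is to compute $\extd\omega(a)$ directly from the definition $\omega(a)=(Sa\o)\extd a\t$ and massage it into the stated form using only the graded Leibniz rule, the fact that $\extd$ extends to $\Omega^2$ with $\extd^2=0$, the recovery relation $\extd b=b\o\,\omega(b\t)$ noted above, and the first Hopf coquasigroup antipode axiom. First I would apply $\extd$ to $\omega(a)=(Sa\o)\extd a\t$: since $Sa\o$ is a $0$-form and $\extd^2=0$, the Leibniz rule gives $\extd\omega(a)=(\extd Sa\o)(\extd a\t)$ with no sign. Then I would rewrite the right-hand factor by the recovery relation as $\extd a\t=a\t\o\,\omega(a\t\t)$, so that, by associativity of the product of forms, $\extd\omega(a)=\bigl((\extd Sa\o)\,a\t\o\bigr)\,\omega(a\t\t)$, where the coproduct appearing is $(\id\tens\Delta)\Delta a=a\o\tens a\t\o\tens a\t\t$.

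The key step is to treat $(\extd Sa\o)\,a\t\o$ by running Leibniz backwards, namely $(\extd Sa\o)\,a\t\o=\extd(Sa\o\,a\t\o)-(Sa\o)\,\extd(a\t\o)$. The point of this rearrangement is that the first Hopf coquasigroup axiom of Definition~5.1 reads $S(a\o)\,a\t\o\tens a\t\t=1\tens a$, so the element $Sa\o\,a\t\o\tens a\t\t$ collapses to $1\tens a$. Consequently the first term, being the image of $Sa\o\,a\t\o\tens a\t\t$ under the linear map $b\tens c\mapsto(\extd b)\,\omega(c)$, equals $(\extd 1)\,\omega(a)=0$ since $\extd 1=0$. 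Only the second term survives, and applying the recovery relation once more as $\extd(a\t\o)=a\t\o\o\,\omega(a\t\o\t)$ yields $\extd\omega(a)=-(Sa\o\,a\t\o\o)\,\omega(a\t\o\t)\,\omega(a\t\t)$, which is precisely the negative of the asserted second term; rearranging gives the stated Maurer--Cartan identity.

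I expect the only real obstacle to be the bookkeeping of the non-coassociative coproduct, not any genuine difficulty. The whole argument hinges on the collapse $S(a\o)\,a\t\o\tens a\t\t=1\tens a$ applying to exactly the pair $(a\o,a\t\o)$ with $a\t\t$ left as a spectator, which is why it is essential to expand $\extd a\t$ through $(\id\tens\Delta)\Delta$ at the outset, so that the antipode axiom is available in precisely the shape needed. One must then verify that the final re-splitting of $a\t\o$ as $a\t\o\o\tens a\t\o\t$ reproduces exactly the bracketing $a\o\tens a\t\o\o\tens a\t\o\t\tens a\t\t$ occurring in the statement, and confirm that coassociativity is never silently invoked; with that care the remaining manipulations are routine.
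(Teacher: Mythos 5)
Your proposal is correct and is essentially the paper's own proof run in the opposite direction: the paper starts from $0=\extd 1=(\extd((Sa\o)a\t\o))\,Sa\t\t\o\,\extd a\t\t\t$ and splits it by the Leibniz rule into $\extd\omega(a)$ plus the quadratic term, while you start from $\extd\omega(a)$ and reassemble the same vanishing term by running Leibniz backwards. The ingredients --- the antipode axiom $(Sa\o)a\t\o\tens a\t\t=1\tens a$, the recovery relation $\extd b=b\o\,\omega(b\t)$, the graded Leibniz rule with $\extd^2=0$, and the care to never invoke coassociativity --- are identical in both arguments.
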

\proof We note that $\extd 1=0$ and hence
\begin{eqnarray*} 0&=& \extd 1= (\extd((Sa\o)a\t\o))Sa\t\t\o\, \extd a\t\t\t\\
&=& (\extd S a\o)a\t\o Sa\t\t\o\, \extd a\t\t\t+Sa\o (\extd a\t\o) Sa\t\t\o\ \extd a\t\t\t \\
&=& (\extd S a\o) \extd a\t+Sa\o (\extd a\t\o) Sa\t\t\o\ \extd a\t\t\t
\end{eqnarray*}
using the Hopf coquasigroup identities, the derivation property of $\extd$ and the Hopf coquasigroup identities again. Now the first term is $\extd\omega(a)$ by the Leibniz rule and $\extd^2=0$ while the second term is $S a\o \extd a\t\o \omega(a\t\t)=Sa\o a\t\o\o \omega(a\t\o\t)\omega(a\t\t)$ by the remark after Definition~6.2. \endproof

\subsection{Left-invariant vector fields on $S^7$ and the Lie algebra $g_2$}

Applying the results above, we conclude with some first remarks on  `lie algebra' objects which should be associated to $S^{2^n-1}$ as some kind of `group' and should be defined by vector fields associated to the left Maurer-Cartan form obtained above. We work with $k$ of characteristic not 2,3 for convenience and $S^{2^n-1}$ is defined by $F$ giving a composition algebra as in Section~2.

We will use the convention that $x_a$ indicates $a\in \{0,1,\cdots, n-1\}$ while $x_i$ indicates $i\in\{1,\cdots,n-1\}$. We use the classical calculus on $k[S^{2^n-1}]$ defined by $I=(k[S^{2^n-1}]^+)^2$, the square of the augmentation ideal. We have  $\omega(x_0-1)=\sum_a (Sx_a)\extd x_a F(a,a)=\sum_a x_a\extd x_a=0$ and can further deduce that 
\[ \omega(x_ix_j)=\omega((x_0-1)^2)=0,\quad \omega(x_0x_i)=\omega(x_i)=:\omega_i\]
and similarly for higher degree monomials. We see that $\{\omega_i\}$ defines a basis of $\Lambda^1$.  Since this is a classical calculus we also have an exterior algebra with $\omega_i$ anti-commuting among themselves.

\begin{definition} We define the `left-invariant' vector fields $\del^i$ on $S^{2^n-1}$ and `structure functions' $c_i{}^{jk}$  for $i,j,k\in \Z_2^n$, $i,j,k\ne 0$, by $c_i{}^{jk}=-c_i{}^{kj}$ and 
\[\extd f= \sum_i \del^i(f)\omega(x_i),\quad \extd \omega_i+\sum_{j,k}c_i{}^{jk}\omega_j\omega_k=0.\]
\end{definition}

The Maurer-Cartan form also provides a `Lie bracket' on $\Lambda^{1*}$ as adjoint to a generalised `Lie cobacket' $\delta:\Lambda^1\to \Lambda^1\tens \Lambda^1$ defined by projection of the adjoint coaction. This is a method which works for quantum groups and in our case we similarly apply the `adjoint coaction' on $k[S^{2^n-1}]$ and hence on $k[S^{2^n-1}]^+$ (defined by the map in Lemma~5.5 since $k[S^{2^n-1}]$ is commutative and flexible). We then project this `coaction' to a map $k[S^{2^n-1}]^+\to k[S^{2^n-1}]^+\tens k[S^{2^n-1}]^+$ via the counit projection $\id-1\eps$ and then project further via $\omega$ to find $\delta$. A short computation gives
\[ \delta \omega_k=2\sum_{i+j=k}\omega_i\tens \omega_j F(i,j)\]
as a natural `Lie coalgebra' structure on $\Lambda^1$. Note that this is antisymmetric in output due to the form of $\Rcal$ and can also be obtained from the  Maurer-Cartan equations via the structure functions $c_i{}^{jk}$ evaluated at the quasigroup identity (see Lemma~\ref{leftvec}). Dually, we obtain some kind of `Lie algebra' on $\Lambda^{1*}$ taken with basis $\{\del^i\}$ and 
\[ [[\del^i,\del^i]]:=0,\quad [[\del^i,\del^j]]:=2F(i,j)\del^{i+j},\quad i\ne j.\]
This bracket  reproduces the `vector cross product' on $S^7$ when $n=3$ but is obtained now geometrically as a `Lie-type' structure on the algebraic tangent bundle at the identity. As for the theory of analytic Moufang quasigroups, one knows in this case that it is in fact a Mal'tsev algebra. In our setting this follows easily from properties of $F$:
 
 \begin{proposition} For any $F$ obeying the composition algebra identities in Section~2, the bracket $[[\ ,\ ]]$  on the tangent bundle of $k[S^{2^n-1}]$ is a Mal'tsev algebra i.e.
 \[ [[ J(x,y,z),x]]=J(x,y,[[x,z]]);\quad J(x,y,z)=[[x,[[y,z]]\, ]]+ [[y,[[z,x]]\, ]]+[[z,[[x,y]]\, ]]. \] 
 \end{proposition}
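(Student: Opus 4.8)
My plan is to verify the two Mal'tsev identities directly on the structure constants, reducing everything to the combinatorics of $F$. Writing the defining bracket as $[[\del^i,\del^j]]=2F(i,j)\del^{i+j}$ for $i\ne j$ and $0$ for $i=j$, I first note it is antisymmetric, $[[\del^j,\del^i]]=2F(j,i)\del^{i+j}=-[[\del^i,\del^j]]$ by (\ref{canr}), since $F(j,i)=-F(i,j)$ whenever $i\ne j$ are both nonzero; consequently the Jacobiator $J$ is totally alternating. The identity to be checked, $[[J(x,y,z),x]]=J(x,y,[[x,z]])$, is linear in $y,z$ and quadratic in $x$, so in characteristic $\ne 2$ it suffices to test it on $x=\del^a$ and on $x=\del^a+\del^{a'}$, equivalently to verify the associated multilinear identity in $\del^a,\del^{a'},\del^b,\del^c$ with $a,a',b,c\ne 0$. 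The $\Z_2$-grading helps organise this: the diagonal contributions lie along $\del^{b+c}$ whereas the polarised cross terms lie along $\del^{a+a'+b+c}$, so the two families decouple and are treated separately.

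Next I would dispose of all degenerate configurations, i.e. those in which the relevant indices are linearly dependent over $\Z_2$. When two of the three arguments of $J$ coincide, $J$ vanishes because it is alternating, killing the left-hand side, and the corresponding right-hand side collapses for the same reason or because $[[\del^a,\del^a]]=0$. When a triple of indices sums to $0$, as in $J(\del^a,\del^b,\del^{a+b})$, every summand acquires an outer bracket of the form $[[\del^e,\del^e]]=0$, so again $J=0$; this handles the case $b=c$ (where the third slot of $J$ on the right becomes $[[\del^a,\del^b]]\propto\del^{a+b}$) and the case $a+b+c=0$. In all such configurations both sides are zero.

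The substantive case is when the indices are in linearly independent (general) position. Here Lemma~\ref{phi} gives $\phi=-1$, and expanding both sides with $[[\del^i,\del^j]]=2F(i,j)\del^{i+j}$ turns each Mal'tsev identity into a scalar identity among values of $F$; schematically, on $x=\del^a,y=\del^b,z=\del^c$ it reads
\[ J_0\,F(a{+}b{+}c,a)=F(a,c)\bigl[F(b,a{+}c)F(a,a{+}b{+}c)+F(a{+}c,a)F(b,c)+F(a,b)F(a{+}c,a{+}b)\bigr], \]
where $J_0=F(b,c)F(a,b{+}c)+F(c,a)F(b,c{+}a)+F(a,b)F(c,a{+}b)$, with an analogous equation for the polarised part in four indices. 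I expect the verification of these scalar identities to be the main obstacle: it is carried out by repeated use of the quadratic composition relations (\ref{quad2}) and (\ref{quad3}) and the sign rules (\ref{canr})--(\ref{canl}), together with $\phi=-1$ to trade $F(a,b)F(a+b,c)$ for $-F(b,c)F(a,b+c)$, but it demands patient bookkeeping because of the number of $F$-factors and the several sub-configurations of the four indices.

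Finally, I would record the conceptual shortcut that removes this obstacle entirely. By the associator formula $(e_a,e_b,e_c)=(\phi(a,b,c)-1)F(b,c)F(a,b+c)e_{a+b+c}$ noted above and Lemma~\ref{phi} (which gives $\phi=1$ as soon as two arguments agree), the product of $k_F\Z_2^n$ satisfies $(e_a,e_a,e_c)=0$ and $(e_a,e_b,e_b)=0$, so $k_F\Z_2^n$ is an alternative algebra. On the imaginary subspace $V=\mathrm{span}\{e_a:a\ne 0\}$ the commutator is $[e_i,e_j]=(F(i,j)-F(j,i))e_{i+j}$, which is $0$ for $i=j$ and equals $2F(i,j)e_{i+j}\in V$ for $i\ne j$ by (\ref{canr}); thus $V$ is a subalgebra of the commutator algebra and $\del^i\mapsto e_i$ intertwines $[[\ ,\ ]]$ with it. Since the commutator algebra of any alternative algebra is Mal'tsev (a classical theorem, cf.\ \cite{PS}) and subalgebras of Mal'tsev algebras are again Mal'tsev, $(\Lambda^{1*},[[\ ,\ ]])$ is Mal'tsev, as required.
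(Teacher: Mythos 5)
Your proposal contains two distinct threads. The first --- reducing the Mal'tsev identity on basis vector fields to a scalar identity among values of $F$ in the linearly independent case, with degenerate configurations handled separately --- is exactly the paper's route, but you stop precisely where the paper's work happens. Using $F(a+b+c,a)=F(a,b+c)$, $F(a+c,a)=F(a,c)$, $F(a,a+b+c)=-F(a,b+c)$ (from (\ref{canr}), (\ref{canl}) and antisymmetry), two of the three terms on each side of your displayed identity match pairwise, and what remains is the single relation $F(a,b+c)F(c,a+b)=F(a,c)F(a+c,a+b)$, which holds because $\phi(a,c,a+b)=\phi(a,c,b)=-1$ by Lemma~\ref{phi}. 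Since you declare this verification ``the main obstacle'' rather than performing it, that thread alone is not a proof. (One place where you are actually more careful than the paper: the identity is quadratic in $x$, so checking it on basis elements alone does not strictly suffice; your polarization remark is the honest statement of what must be checked, and it is your second argument, not the basis computation, that covers those cross terms.)

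The second thread, your conceptual shortcut, is a genuinely different and viable route, but as written it has one real gap: from $(e_a,e_a,e_c)=0$ and $(e_a,e_b,e_b)=0$ you cannot conclude that $k_F\Z_2^n$ is alternative. Alternativity requires $(x,x,y)=0$ for \emph{all} $x$, and since the associator is bilinear in its first two slots, vanishing on repeated basis vectors is strictly weaker than vanishing of the full symmetrization; you need the associator to be alternating, i.e.\ $(e_a,e_b,e_c)=-(e_b,e_a,e_c)$, which in the linearly independent case is the identity $F(b,c)F(a,b+c)=-F(a,c)F(b,a+c)$ --- true, but itself a consequence of (\ref{quad3}) that requires an argument (multiply (\ref{quad3}) through by $F(b,c)F(b,a+c)$ and use $F^2=1$). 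So the shortcut does not avoid identities on $F$; it merely needs an easier one. The cleanest repair is to do what the paper itself does in the proof of Proposition~6.10: quote from \cite{Ma99} that $k_FG$ under the composition assumptions is alternative. With that citation (or the polarization fix just indicated), your chain --- the imaginary subspace is closed under the commutator, $\del^i\mapsto e_i$ identifies $[[\ ,\ ]]$ with the commutator bracket, the commutator algebra of an alternative algebra is Mal'tsev (Mal'tsev's classical theorem, cf.\ \cite{PS}), and bracket-closed subspaces of Mal'tsev algebras are Mal'tsev --- is correct, handles all (including polarized) cases at once, and buys a less computational proof than the paper's self-contained reduction; what it costs is reliance on an external classical theorem where the paper needs only Lemma~\ref{phi}. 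A minor slip: the antisymmetry $F(j,i)=-F(i,j)$ for distinct nonzero $i,j$ follows from $\Rcal(i,j)=-1$ in Lemma~\ref{phi} (equivalently from (\ref{quad}) with $c=0$ and (\ref{Fxx})), not from (\ref{canr}).
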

 \proof Working in our basis and the generic case of $i,j,k$ distinct and $i\ne j+k$ we have
 \[ J(\del^i,\del^j,\del^k)=8\left(F(j,k)F(i,j+k)+F(k,i)F(j,k+i)+F(i,j)F(k,i+j)\right)\del^{i+j+k}\]
 and the identity we need to show is 
 \begin{eqnarray*}
 &&\left(F(j,k)F(i,j+k)+F(k,i)F(j,k+i)+F(i,j)F(k,i+j)\right)F(i+j+k,i)\\
 &&=F(i,k)\left(F(j,i+k)F(i,i+j+k)+F(i+k,i)F(j,k)+F(i+k,j)F(i+k,i+j)\right)\end{eqnarray*}
 which using $F(i+j+k,i)=-F(j+k,i)=F(i,j+k)$ etc., and cancelling terms reduces to the 
 identity $ F(i,j+k)F(k,i+j)=F(i,k)F(i+k,i+j)$ which holds on comparison with $\phi(i,k,i+j)=\phi(i,k,j)=-1$. The degenerate cases are easily handled separately. \endproof
 
 This result is the differential analogue of Proposition~5.8 that $k[S^{2^n-1}]$ is Moufang. The Mal'tsev bracket is not, however, the commutator of vector fields unless the associator $\phi$ is trivial, as the next lemma shows. 

\begin{lemma} \label{leftvec} The vector fields $\del^i$ are given on $k[S^{2^n-1}]$ by
\[ \del^i=-\sum_a F(a,i)x_{i+a}{\del\over\del x_a}\]
and have commutator 
\[ [\del^i,\del^j]=2F(i,j)\left(-\sum_a F(a,i+j)\phi(a,i,j) x_{i+j+a}{\del\over\del x_a}\right),\quad \forall i\ne j.\]
The structure functions are given by
\[ c_i{}^{jk}=-F(i,j)F(i+j,k)\sum_a\phi(a,i,j)\phi(a,i+j,k)F(a,i+j+k)x_ax_{a+i+j+k}\]
for all $i,j,k\ne0$, are totally antisymmetric in the three indices, and obey
\[\eps (c_i{}^{jk})=F(j,k)\delta_{i,j+k}.\]
\end{lemma}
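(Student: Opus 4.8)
The plan is to prove the three assertions in turn, reducing each to a single symmetry identity for $F$. First I would simplify the Maurer--Cartan form: from $\omega_i=\omega(x_i)=(Sx_i\o)\extd x_i\t$ with $\Delta x_i=\sum_{a+b=i}F(a,b)x_a\tens x_b$ and $Sx_a=F(a,a)x_a$ one gets $\omega_i=\sum_a F(a,a)F(a,a+i)x_a\extd x_{a+i}$, and since $F(a,a)F(a,a+i)=F(a,i)$ for every $a$ (trivially for $a=0$, and from (\ref{Fxx}) with (\ref{canr}) for $a\ne0$) this collapses to the clean form $\omega_i=\sum_a F(a,i)x_a\extd x_{a+i}$. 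To identify $\del^i$ it then suffices to check $\extd x_c=\sum_{i\ne0}\del^i(x_c)\,\omega_i$ on generators with $\del^i(x_c)=-F(c,i)x_{c+i}$. Substituting $\omega_i$ and collecting the coefficient $L_d$ of $\extd x_d$, the diagonal $d=c$ gives $L_c=\sum_{i\ne0}x_{c+i}^2=1-x_c^2$ by $\sum_a x_a^2=1$, while for $d\ne c$ the terms pair off under the involution $i\mapsto c+d+i$ and cancel by (\ref{quad3}) and (\ref{canr}), leaving only the unpaired $i=c+d$ (whose partner $0$ is excluded), which gives $L_d=-x_cx_d$ using (\ref{canr}) and $\Rcal(c,d)=-1$. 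Thus $L_d=\delta_{cd}-x_cx_d$, so $\sum_d L_d\,\extd x_d=\extd x_c-x_c\sum_d x_d\extd x_d=\extd x_c$ by $\sum_a x_a\extd x_a=0$; the same cancellation applied to $\del^i(\sum_a x_a^2)$ shows $\del^i$ is tangent to the sphere.

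The commutator is a routine vector field computation giving $[\del^i,\del^j]=\sum_a\bigl(F(a,j)F(a+j,i)-F(a,i)F(a+i,j)\bigr)x_{i+j+a}{\del\over\del x_a}$. Everything now rests on the identity
\[ F(a,x)F(a+x,y)=-F(a,y)F(a+y,x),\qquad x\ne y,\ x,y\ne0, \]
which I would extract once from the symmetry $\phi(a,x,y)=\phi(a,y,x)$ of Lemma~\ref{phi}: expanding $\phi$ (all values being $\pm1$, so each factor is its own inverse) and cancelling $F(a,x+y)$ reduces it to $F(a,x)F(a+x,y)=\Rcal(x,y)F(a,y)F(a+y,x)$, and $\Rcal(x,y)=-1$ for independent $x,y$. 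This turns $F(a,j)F(a+j,i)$ into $-F(a,i)F(a+i,j)$, so the bracket coefficient becomes $-2F(a,i)F(a+i,j)$; rewriting $F(a,i)F(a+i,j)=F(i,j)F(a,i+j)\phi(a,i,j)$ (again just the definition of $\phi$) gives the stated formula.

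For the structure functions, write $m=i+j+k$ and start from $\extd\omega_i=\sum_a F(a,i)\,\extd x_a\,\extd x_{a+i}$ (immediate from the clean $\omega_i$ and $\extd^2=0$, and consistent with Proposition~\ref{mceqn}). Substituting $\extd x_c=-\sum_j F(c,j)x_{c+j}\omega_j$ from the first part yields $\extd\omega_i=\sum_{j,k}B^{jk}\omega_j\omega_k$ with $B^{jk}=\sum_a F(a,i)F(a,j)F(a+i,k)x_{a+j}x_{a+i+k}$. Reindexing $a\mapsto a+j$ and applying (\ref{canl}) and the displayed identity collapses the coefficient of $x_ax_{a+m}$ to $\Gamma^{(jk)}_a:=F(a,i)F(a+i,j)F(a+i+j,k)$. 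The displayed identity applied to the last two factors gives $\Gamma^{(kj)}_a=-\Gamma^{(jk)}_a$, and applied to the first two gives the sign change under $i\leftrightarrow j$; hence $\Gamma$ is totally antisymmetric term-by-term, and since $\extd\omega_i=-\sum c_i{}^{jk}\omega_j\omega_k$ selects the antisymmetric part, this identifies $c_i{}^{jk}=-\sum_a\Gamma^{(jk)}_a x_ax_{a+m}$. That this equals the stated $\phi$-expression follows by expanding $\phi(a,i,j)\phi(a,i+j,k)F(a,m)$ and deleting all squared $F$-factors, which reduces $-F(i,j)F(i+j,k)\phi(a,i,j)\phi(a,i+j,k)F(a,m)$ to $-\Gamma^{(jk)}_a$. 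Finally, applying $\eps$ forces $a=0$ and $m=0$, and with $\phi(0,\cdot,\cdot)=1$, $F(0,\cdot)=1$ one gets $\eps(c_i{}^{jk})=-F(i,j)F(i+j,k)\delta_{i,j+k}$, which on its support $i+j=k$ becomes $F(j,k)\delta_{i,j+k}$ using $F(i+j,k)=F(k,k)=-1$, (\ref{canl}) and $\Rcal(j,k)=-1$.

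The main difficulty is bookkeeping rather than conceptual: tracking the index shifts through the reindexings and involutions, and disposing of the degenerate cases where two indices coincide or a partial sum vanishes (these are exactly where the hypotheses $x\ne y$ and $\Rcal=-1$ of the symmetry identity must be checked separately, and where terms drop out by the pairing argument). The one structural insight that makes all three parts fall out term-by-term rather than only after summation is the single symmetry identity above, which converts every product of four $F$'s into a canonical one up to sign.
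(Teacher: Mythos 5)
Your proof is correct, but it takes a genuinely different route from the paper's in the first and third parts. For the vector fields, the paper simply invokes the identity $\extd f=f\o\omega(f\t)$ (noted after Definition~6.2 as a consequence of the Hopf coquasigroup axioms), which on generators instantly gives $\extd x_a=\sum_i F(i+a,i)x_{i+a}\omega_i$ and hence $\del^i(x_a)=-F(a,i)x_{i+a}$ by (\ref{canl}), with no case analysis at all; you instead compute $\omega_i=\sum_a F(a,i)x_a\extd x_{a+i}$ explicitly and verify the expansion of $\extd x_c$ by a pairing argument, which is longer and forces the sub-cases $c=0$, $d=0$, $i=c$, $i=d$, but hands you the clean form of $\omega_i$ that powers your third part. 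For the structure functions the paper reads them off the abstract Maurer--Cartan equation of Proposition~\ref{mceqn}, lands on a form manifestly antisymmetric in $j,k$, and then needs the cocycle identity for $\phi$ to pass to the stated form (antisymmetric in $i,j$); you bypass Proposition~\ref{mceqn} altogether, computing $\extd\omega_i=\sum_a F(a,i)\,\extd x_a\,\extd x_{a+i}$ from your explicit $\omega_i$ and collapsing every coefficient to the canonical product $\Gamma^{(jk)}_a=F(a,i)F(a+i,j)F(a+i+j,k)$. That is a real gain in transparency: total antisymmetry becomes term-by-term (two applications of your displayed identity $F(a,x)F(a+x,y)=-F(a,y)F(a+y,x)$), and the match with the stated $\phi$-expression is an exact cancellation upon expanding $\phi=\del F$ --- indeed both of the paper's two forms of $c_i{}^{jk}$ expand to this same $\Gamma^{(jk)}_a$, so your canonical form is precisely what the paper's ``cast in the equivalent form via the cocycle identity'' step produces implicitly. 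The commutator part is essentially the paper's computation, just packaged through that one identity (the paper uses $\phi$-symmetry and $F(j,i)=-F(i,j)$ in the same way). The price of your route is the degenerate-case bookkeeping that you flag but do not carry out ($j$ or $k$ equal to $i$, or $j=k$, where the hypotheses of your identity fail, and the excluded indices in the part-one pairing): these do all check out, each side reducing to a multiple of $\sum_a F(a,l)x_ax_{a+l}=0$ for the appropriate $l$ (the same identity the paper uses for its $j=k$ check), but a complete write-up should include them.
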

\begin{proof} We have $\extd f=f\o\omega(f\t)$ as explained after Definition~6.2, which on generators means $d x_a=\sum_{b+c=i}F(b,c)x_b\omega(x_c)=\sum_i F(i+a,i)x_{i+a}\omega_i$ as $\omega(x_0)=0$. Comparing with the definition of $\del^i$ and noting that $F(i+a,i)=-F(a,i)$ verifies our formula  on the generators. Since the calculus in the present case is commutative (the $x_i$ commute with differentials), the Leibniz rule for $d$ implies that the $\del^i$ are derivations. Hence they take the form shown on all of $k[S^{2^n-1}]$. The commutator of two such vector fields is: 
\begin{eqnarray*} && \sum_{a,b}[x_{i+a}F(a,i){\del\over\del x_a}, x_{i+b}F(b,i){\del\over\del x_b}]\\
&=&\sum_b x_{i+j+b}F(j+b,i) F(b,j){\del\over\del x_b}-\sum x_{j+i+a}F(a,i)F(i+a,j){\del\over x_a}\\
&=&\sum_a \left(F(a+j,i)F(a,j)-F(a,i)F(a+i,j)\right)x_{i+j+a}{\del\over\del x_a}\end{eqnarray*}
which gives the answer stated on using the definition of $\phi$ in each of the two products of $F$. We changed variables from $b$ to $a$ in one of the sums. 

For the computation of the structure functions we read off the iterated coproduct and antipode in the Maurer-Cartan equation in Proposition~\ref{mceqn} as
\begin{eqnarray*} 
\extd\omega_i&=&-\sum_{a+b+j+k=i}F(a,a)x_ax_b\omega_j\omega_k\, F(a,b+j+k)F(b+j,k)F(b,j)\\ 
&=&-\sum_a x_a x_{a+i+j+k} \omega_j\omega_k F(a,i)F(a+i+k,k)F(a+i+j+k,j)\\ 
&=&-\sum_a x_a x_{a+i+j+k}\omega_j\omega_k F(a,i)F(a+i,k)F(a+i+k,j)\\
&=&-\sum_a x_a x_{a+i+j+k}\omega_j\omega_k F(a,i)F(a+i,k+j)F(k,j)\phi(a+i,k,j)\\
&=&-F(k,j)F(i,k+j)\sum_a x_a x_{a+i+j+k}\omega_j\omega_k \phi(a+i,k,j)\phi(a,i,k+j)F(a,i+j+k)\\
\end{eqnarray*}
on substituting $b=a+i+j+k$ and identities for $F$ from Section~2. We used $\phi=\del F$ for the last two equalities. From this we see that
\[ c_i{}^{jk}=-F(j,k)F(i,j+k)\sum_a\phi(a,i,j+k)\phi(a+i,j,k)F(a,i+j+k)x_ax_{a+i+j+k}\]
when $j\ne k$, which is manifestly antisymmetric in $j,k$. When $j=k$ we have
\[ c_i{}^{jj}=\sum_aF(a,i)x_ax_{a+i}=\sum_a F(a,a)F(a,a+i)x_ax_{a+i}=\eps(x_i)=0\]
as also required. We then use the cocycle identity for $\phi$ and $\phi=\del F$ to cast $c_i{}^{jk}$ in the equivalent form stated. In this second form we see similarly that $c_i{}^{jk}$ is antisymmetric in $i,j$ and vanishes when $i=j$. The application of $\eps$ is immediate in the first form of $c_i{}^{jk}$ as only $a=0$ and $i=j+k$ can contribute. \end{proof}

If $n=2$ we obtain the left invariant vector fields on $k[S^3]$ and in this case $\phi=1$ and we have a closed Lie algebra with $[\del^i,\del^j]=[[\del^i,\del^j]]$, i.e. the commutator of the vector fields represents the Lie algebra. The Lie algebra in this case is $su_2$ and we see that its structure constants are rather simply $2F(i,j)$ in our basis labelled by $0\ne i,j\in \Z_2^2$. We also see precisely how the lack of closure in the case of $S^7$ depends only on the cocycle $\phi$.  Similarly, from the structure functions we see that the impact of nontrivial $\phi$ is that these depend on $x$. However, this $x$-dependence is quite mild and one can show (we have done this by direct computation with Mathematica) that
\[ \sum_{l,k}c_l{}^{ik}c_k{}^{jl}=-(2^n-2)\delta_{ij}\]
in all cases $n\le 3$. Moreover, we see after dualising that the value of $2c_i{}^{jk}$ at the identity of $S^7$ (or everywhere for $n<3$) defines the same  Mal'tsev or Lie algebra $[[\ ,\ ]]$. 

We can look similarly at right-invariant vector fields on $k[S^{2^n-1}]$. These are defined by $\extd f=\omega_R(x_i)\del^i_R(f)$ and similar computation gives 
\[ \del_i^R=-\sum_a F(i,a)x_{i+a}{\del\over\del x_a}.\]
On a group manifold the left and right invariant vector fields commute, while in the nonassociative
case they do not as another (more conventional) expression of the nonassociativity.  Paralleling standard results in the theory of analytic Moufang loops we have
\begin{corollary}
On $k[S^{2^n-1}]$ we have the identity
\[[\del^i,\del^j]-[[\del^i,\del^j]]=-2 [\del^i,\del^j_R]\]
\end{corollary}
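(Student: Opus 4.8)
The plan is to make every object fully explicit via Lemma~\ref{leftvec} and then match coefficients. From that lemma,
\[ [\del^i,\del^j]=-2F(i,j)\sum_a F(a,i+j)\phi(a,i,j)\,x_{i+j+a}{\del\over\del x_a}, \]
while $[[\del^i,\del^j]]=2F(i,j)\del^{i+j}=-2F(i,j)\sum_a F(a,i+j)\,x_{i+j+a}{\del\over\del x_a}$; both are derivations supported on the operators $x_{i+j+a}{\del\over\del x_a}$. First I would compute the mixed commutator from $\del^i=-\sum_a F(a,i)x_{i+a}{\del\over\del x_a}$ and $\del^j_R=-\sum_a F(j,a)x_{j+a}{\del\over\del x_a}$: the two Leibniz contributions each produce a Kronecker delta from ${\del\over\del x_a}$, which I resum to get
\[ [\del^i,\del^j_R]=\sum_a\bigl(F(a+j,i)F(j,a)-F(j,a+i)F(a,i)\bigr)x_{i+j+a}{\del\over\del x_a}, \]
again supported on $x_{i+j+a}{\del\over\del x_a}$. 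Since a derivation of $k[S^{2^n-1}]$ is determined by its action on the generators $x_c$ and all these coefficients are scalars, the corollary is equivalent to matching, for each $a\in\Z_2^n$, the coefficient of $x_{i+j+a}{\del\over\del x_a}$ on the two sides.

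Next I would simplify both coefficients using only $\phi=\del F$ and $F(a,b)^2=1$. Writing $\phi(a,i,j)=F(a,i)F(a+i,j)F(i,j)F(a,i+j)$ gives $F(i,j)F(a,i+j)\phi(a,i,j)=F(a,i)F(a+i,j)$, hence
\[ [\del^i,\del^j]-[[\del^i,\del^j]]=-2F(i,j)\sum_a F(a,i+j)\,(\phi(a,i,j)-1)\,x_{i+j+a}{\del\over\del x_a}. \]
For the mixed term, expanding $\phi(j,a,i)$ the same way yields $F(a+j,i)F(j,a)=\phi(j,a,i)F(a,i)F(j,a+i)$, and since $\phi$ is totally symmetric (Lemma~\ref{phi}) we may replace $\phi(j,a,i)$ by $\phi(a,i,j)$, so
\[ -2[\del^i,\del^j_R]=-2\sum_a F(a,i)F(j,a+i)\,(\phi(a,i,j)-1)\,x_{i+j+a}{\del\over\del x_a}. \]
Cancelling the factor $-2$, the whole corollary reduces to the scalar identity
\[ F(i,j)F(a,i+j)\,(\phi(a,i,j)-1)=F(a,i)F(j,a+i)\,(\phi(a,i,j)-1),\qquad\forall a\in\Z_2^n. \]

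I would finish by a short case analysis on $\phi(a,i,j)$. Since $i\ne j$ are both nonzero they are linearly independent over $\Z_2$, so $a,i,j$ are dependent precisely when $a\in\{0,i,j,i+j\}$; in each of these cases $\phi(a,i,j)=1$ and the identity reads $0=0$. The only remaining case is $a,i,j$ linearly independent, where $\phi(a,i,j)=-1$ and the identity reduces to the cocycle-free relation
\[ F(i,j)F(a,i+j)=F(a,i)F(j,a+i). \]
This is the single genuinely computational step and the main obstacle. I expect to prove it by first using (\ref{canr}) to write $F(j,a+i)=-F(j,a+i+j)$, then applying (\ref{quad}) with leading arguments $a,j$ and offset $c=a+i$ to obtain $F(a,i)F(j,a+i+j)=-F(a,a+i+j)F(j,i)$, and finally simplifying $F(a,a+i+j)=-F(a,i+j)$ by (\ref{canr}) and $F(j,i)=-F(i,j)$ by $\Rcal(i,j)=-1$ (Lemma~\ref{phi}); these collapse the right-hand side to $F(i,j)F(a,i+j)$, as needed. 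The degenerate value $i=j$ is treated separately: there $[\del^i,\del^j]=[[\del^i,\del^j]]=0$, while the coefficient formula above with $j=i$, together with (\ref{canr}) and (\ref{canl}), gives $[\del^i,\del^i_R]=0$, so the identity again holds as $0=0$.
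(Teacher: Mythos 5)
Your proposal is correct and takes essentially the same route as the paper: both make everything explicit via Lemma~\ref{leftvec}, compute the mixed commutator $[\del^i,\del^j_R]$ directly, and match the coefficients of $x_{i+j+a}{\del\over\del x_a}$ using $\phi=\del F$, the symmetry of $\phi$ and the composition identities for $F$. The only (cosmetic) difference is the endgame: the paper manipulates $\phi$ symbolically inside the sum, whereas you split into the cases $\phi(a,i,j)=\pm 1$ and prove the surviving scalar identity $F(i,j)F(a,i+j)=F(a,i)F(j,a+i)$ from (\ref{canr}) and (\ref{quad}) --- a step which, incidentally, also follows in one line by combining $\phi(a,i,j)=-1$ with $\Rcal(a+i,j)=-1$ from Lemma~\ref{phi}.
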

\proof By a similar calculation as for Lemma~6.8 we have
\begin{eqnarray*} -2[\del^i,\del^j_R]&=&-2\sum_a\left(F(a+j,i)F(j,a)-F(a,i)F(j,a+i)\right)x_{i+j+a}{\del\over x_a}\\
&=&-2\sum_a (1-\phi(a,i,j))F(a+j,i)F(j,a)x_{i+j+a}{\del\over\del x_a}\\
&=&-2\sum_a(1-\phi(a,i,j))F(j,i)F(a,i+j)\phi(a,j,i)x_{i+j+a}{\del\over\del x_a}\\
&=&2F(i,j)\sum_a(\phi(a,i,j)-1)(-F(a,i+j)x_{i+j+a}{\del\over\del x_a})=[\del^i,\del^j]-[[\del^i,\del^j]]\end{eqnarray*}
when $i\ne j$ and 0 when $i=j$. We used Lemma~6.8 to recognise the result. \endproof

This expresses the failure of $[[\ ,\ ]]$ defined on the space of left-invariant vector fields to be represented by their commutator. There is a similar result for the commutator of right-invariant vector fields compared to the Mal'tsev bracket  $[[\del^i_R,\del^j_R]]:=2F(i,j)\del_R^{i+j}$ of two right-invariant vector fields.

Finally, let us apply all our tools to obtain the structure constants of $g_2$. The latter is the Lie algebra of derivations on the octonions and it is known that these can all be obtained  in the form
\[ D_{x,y}z=[[x,y],z]- 3((xy)z-x(yz)),\quad x,y,z\in \O.\]
It suffices here to consider $x,y\ne 1$, and one similarly obtains derivations for any alternative algebra. In our description of the octonions etc as quasialgebras $k_FG$, we take $D_{ij}=D_{e_i,e_j}$ for  imaginary basis elements and acting by the same formula on the algebra.

\begin{proposition} For $F$ giving a composition algebra the operations $D_{ij}$ on $k_FG$  take the form
\[ D_{ij}(e_a)=F(i,j) F(i+j,a)\psi(i,j,a)e_{i+j+a};\quad \psi(i,j,a)=3\phi(i,j,a)-1-2\Rcal(i+j,a)\]
where
\[ \psi(i,j,a)=\begin{cases} 0 & a=0, i+j \ {\rm or}\ i=j\cr 4 & a=i, j \cr -2 & i,j,a\ {\rm linearly\ independent}\end{cases}.\]
The derivation property of such $D_{ij}$ corresponds to the identity
\[ \psi(i,j,a+b)=\phi(i+j,a,b)\psi(i,j,a)+\Rcal(i+j,a)\psi(i,j,b).\]
\end{proposition}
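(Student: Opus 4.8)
The plan is to compute $D_{e_i,e_j}(e_a)$ by brute force inside $k_FG$, read off $\psi$, reconcile the raw answer with the closed form $3\phi(i,j,a)-1-2\Rcal(i+j,a)$, check the tabulated values, and finally extract the derivation identity from the Leibniz rule.

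First I would evaluate each piece of $D_{e_i,e_j}(e_a)=[[e_i,e_j],e_a]-3((e_ie_j)e_a-e_i(e_je_a))$ on the basis. Using $e_pe_q=F(p,q)e_{p+q}$ together with $F(q,p)=\Rcal(p,q)F(p,q)$ (valid since $\Rcal=\pm1$), the commutator gives $[e_p,e_q]=F(p,q)(1-\Rcal(p,q))e_{p+q}$, so that
\[ [[e_i,e_j],e_a]=F(i,j)F(i+j,a)(1-\Rcal(i,j))(1-\Rcal(i+j,a))e_{i+j+a}. \]
Likewise, writing $F(j,a)F(i,j+a)=\phi(i,j,a)F(i,j)F(i+j,a)$ from $\phi=\del F$ (again using $\phi=\pm1$), the additive associator is $F(i,j)F(i+j,a)(1-\phi(i,j,a))e_{i+j+a}$. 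Combining, $D_{ij}(e_a)=F(i,j)F(i+j,a)\,\Psi\,e_{i+j+a}$ with $\Psi=(1-\Rcal(i,j))(1-\Rcal(i+j,a))-3(1-\phi(i,j,a))$.

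Next I would reconcile $\Psi$ with the stated $\psi(i,j,a)=3\phi(i,j,a)-1-2\Rcal(i+j,a)$. A direct subtraction gives
\[ \Psi-\psi(i,j,a)=-(1-\Rcal(i+j,a))(1+\Rcal(i,j)), \]
which vanishes whenever $i,j\ne0$: either $i\ne j$, so $\Rcal(i,j)=-1$ kills the second factor, or $i=j$, so $i+j=0$ and $\Rcal(0,a)=1$ kills the first. Hence on imaginary basis elements $\Psi=\psi(i,j,a)$. The case table then follows by feeding the explicit forms of $\phi$ and $\Rcal$ from Lemma~\ref{phi} into $\psi=3\phi-1-2\Rcal(i+j,a)$: the values $a=0$, $a=i+j$ and $i=j$ each force $\phi=1$ and $\Rcal(i+j,a)=1$, giving $0$; while $a=i$ or $a=j$ give $\phi=1$ but $\Rcal(i+j,a)=-1$ (as $i+j,a$ are then independent), giving $4$; and $i,j,a$ independent give $\phi=-1$, $\Rcal(i+j,a)=-1$, hence $-2$.

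Finally, for the derivation property I would impose Leibniz, $D_{ij}(e_ae_b)=D_{ij}(e_a)e_b+e_aD_{ij}(e_b)$, on $e_ae_b=F(a,b)e_{a+b}$. After cancelling the common factor $F(i,j)e_{i+j+a+b}$ and dividing by $F(a,b)F(i+j,a+b)$, the left side becomes $\psi(i,j,a+b)$, the coefficient of $\psi(i,j,a)$ becomes $\frac{F(i+j,a)F(i+j+a,b)}{F(a,b)F(i+j,a+b)}=\phi(i+j,a,b)$ straight from the definition of $\phi$, and the coefficient of $\psi(i,j,b)$ is $\frac{F(i+j,b)F(a,i+j+b)}{F(a,b)F(i+j,a+b)}$. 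The crux is to identify this last ratio as $\Rcal(i+j,a)$: comparing $\phi(i+j,a,b)$ with $\phi(a,i+j,b)$, whose numerators share the factor $F((i+j)+a,b)=F(a+(i+j),b)$, shows the ratio equals $\frac{\phi(i+j,a,b)}{\phi(a,i+j,b)}\,\Rcal(i+j,a)$, and the symmetry $\phi(i+j,a,b)=\phi(a,i+j,b)$ from Lemma~\ref{phi} collapses the quotient to $1$, leaving exactly $\Rcal(i+j,a)$. This produces the stated identity. The main obstacle is precisely this identification, which is where the \emph{symmetry} of $\phi$ (not merely $\phi=\del F$) is essential; the degenerate sub-case $b=a$, where $a+b=0$, is consistent separately since $(1+\Rcal(i+j,a))\psi(i,j,a)=0$ holds in every row of the table.
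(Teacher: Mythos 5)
Your proposal is correct and follows essentially the same route as the paper: direct computation of the commutator and associator terms on basis elements, reading off $\psi$, evaluating the cases via Lemma~2.1, and translating the Leibniz rule into the $\psi$-identity by dividing by $F(a,b)F(i+j,a+b)$ and using $\phi=\del F$ together with the symmetry of $\phi$ and $\Rcal$. The only differences are minor: your factorization $\Psi-\psi=-(1-\Rcal(i+j,a))(1+\Rcal(i,j))$ handles $i=j$ uniformly where the paper treats $i\ne j$ and then checks $i=j$ separately, and the paper additionally invokes alternativity of $k_FG$ (citing the reference for the octonion quasialgebra) to conclude that the Leibniz rule, and hence the stated identity, actually holds --- whereas you establish only the equivalence, so for the later applications (e.g.\ Theorem~6.14) one would still want either that alternativity argument or a full case check of the identity, of which your $b=a$ check is only a part.
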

\proof When $i\ne j$ we use the definition of the product of $k_FG$ to find
\begin{eqnarray*}&&\kern-10pt  [[e_i,e_j],e_a]-3((e_ie_j)e_a-e_a(e_je_a))\\
&=&\left(2 F(i,j)F(i+j,a)(1-\Rcal(a,i+j))-3(1-\phi(i,j.a))F(i,j)F(i+j,a)\right)e_{i+j+a}\end{eqnarray*}
which simplifies as stated in terms of a function $\psi$.  The values of $\psi$ follow from Lemma~2.1 and from this we see that the formula for $D_{ij}$ also applies as zero when $i=j$. Finally, we know from \cite{Ma99} that $k_FG$ under our assumptions is alternative, hence the $D_{ij}$ are derivations. However, $D_{ij}(e_a\cdot e_b)=(D_{ij}(e_a)\cdot e_b+e_a\cdot D_{ij}e_b$ translates in terms of the stated formulae to
\begin{eqnarray*}&& \kern-30pt F(a,b)F(i+j,a+b)\psi(i,j,a+b) \\
&=& F(i+j+a,b)F(i+j,a)\psi(i,j,a)+F(a,i+j+b)F(i+j,b)\psi(i,j,b).\end{eqnarray*}
We divide through by $F(a,b)F(i+j,a+b)$ to recognise $\phi(i+j,a,b)$ in the first term,  $\phi(i+j,a,b)\phi(a,j+j,b)=1$ in the second, and $\Rcal$ as stated. The resulting identity can also be seen to hold by case analysis of  the values of $\psi,\phi,\Rcal$ (or by using Mathematica).
\endproof
 
 We now dualise these operations to obtain vector fields on $k_FG$ as characterised by $\langle D_{ij}x_a,e_b\rangle=-\langle x_a,D_{ij}e_b\rangle$ and extended as derivations.
 
\begin{lemma} \label{Dij} 
The induced vector fields on $k_FG$ take the form
\[ D_{ij}=F(i,j)\sum_{a} F(a,i+j)\psi(i,j,a)x_{i+j+a}{\del\over\del x_a}.\]
and descend to vector fields on $S^{2^n-1}$. As such we have
\[D_{ij}=-{3\over 2}[\del^i,\del^j] + {1\over 2} [[\del^i,\del^j]]+ [[\del^i_R,\del^j_R]]\]
\end{lemma}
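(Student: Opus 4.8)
The plan is to prove the two assertions in turn: first the explicit coordinate (vector-field) form of $D_{ij}$, and then the bracket decomposition, which reduces to a single coefficient-matching identity once every term is written in a common shape.

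For the coordinate form, I would dualise the action of $D_{ij}$ on $k_FG$ given in the preceding proposition using the pairing $\langle x_a,e_b\rangle=\delta_{a,b}$. From $\langle D_{ij}x_a,e_b\rangle=-\langle x_a,D_{ij}e_b\rangle$ together with $D_{ij}e_b=F(i,j)F(i+j,b)\psi(i,j,b)e_{i+j+b}$, one reads off that $D_{ij}x_a$ is a scalar multiple of $x_{i+j+a}$, whose coefficient is (up to sign) $F(i,j)F(i+j,i+j+a)\psi(i,j,i+j+a)$. I would then convert $F(i+j,i+j+a)$ into $F(a,i+j)$ via (\ref{canr}) and the definition $\Rcal(i+j,a)=F(i+j,a)/F(a,i+j)$, and invoke two facts that are immediate from the tabulated values of $\psi$ in the preceding proposition and the form of $\Rcal$ in Lemma~\ref{phi}: that $\psi(i,j,\cdot)$ is invariant under the shift $a\mapsto i+j+a$, and that $\psi(i,j,a)\ne 0$ forces $\Rcal(i+j,a)=-1$. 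These collapse the coefficient to $F(a,i+j)\psi(i,j,a)$, giving the stated formula after extending $D_{ij}$ as a derivation from the generators. To see that $D_{ij}$ descends to $S^{2^n-1}$ I would check $D_{ij}\bigl(\sum_a x_a^2\bigr)=0$: pairing the monomial coming from index $a$ with that coming from $i+j+a$ and using (\ref{canl}) together with the same invariance $\psi(i,j,a)=\psi(i,j,i+j+a)$, the two contributions cancel termwise.

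For the bracket identity the engine is the observation that all four vector fields are of the common shape $F(i,j)\sum_a c(a)\,x_{i+j+a}\,{\del\over\del x_a}$. Reading off $c(a)$ from Lemma~\ref{leftvec} and the definitions $[[\del^i,\del^j]]=2F(i,j)\del^{i+j}$ and $[[\del^i_R,\del^j_R]]=2F(i,j)\del_R^{i+j}$ gives $c(a)=-2F(a,i+j)\phi(a,i,j)$ for $[\del^i,\del^j]$, then $c(a)=-2F(a,i+j)$ for $[[\del^i,\del^j]]$, and $c(a)=-2F(i+j,a)=-2\Rcal(i+j,a)F(a,i+j)$ for $[[\del^i_R,\del^j_R]]$, using $F(i+j,a)=\Rcal(i+j,a)F(a,i+j)$. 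The weights $-{3\over2},{1\over2},1$ then combine these into $F(a,i+j)\bigl(3\phi(a,i,j)-1-2\Rcal(i+j,a)\bigr)=F(a,i+j)\psi(i,j,a)$ (using symmetry $\phi(a,i,j)=\phi(i,j,a)$ from Lemma~\ref{phi}), which is precisely the coefficient of $D_{ij}$. Thus the numerical weights are forced by, and encode, the three-term decomposition $\psi=3\phi-1-2\Rcal$. The case $i=j$ is disposed of separately: there $\psi(i,j,\cdot)\equiv 0$ and all three brackets vanish, so both sides are zero.

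The genuinely delicate step is the first one: pinning down the overall coefficient, and in particular the sign convention in the duality pairing, for $D_{ij}x_a$. This rests on the two special properties of $\psi$ relative to the shift $a\mapsto i+j+a$ and the support condition $\Rcal(i+j,a)=-1$, which must be read off carefully from the value table in the preceding proposition. Once the coordinate form is secured, the bracket identity is essentially immediate, being a single linear combination of coefficient functions that reproduces the defining decomposition of $\psi$.
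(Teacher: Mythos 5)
Your proposal is correct and follows essentially the same route as the paper's proof: dualise Proposition~6.10 to obtain the coordinate form, verify descent to $S^{2^n-1}$ by the substitution $a\mapsto i+j+a$ using the invariance of $\psi$ and the sign change of $F(a+i+j,i+j)$, and obtain the bracket identity by matching coefficients of the common shape $F(i,j)\sum_a c(a)x_{i+j+a}{\del\over\del x_a}$ through the decomposition $\psi=3\phi-1-2\Rcal$ together with Lemma~6.8. The sign subtlety you flag in the duality pairing is handled at exactly the same level of care in the paper itself, which simply notes $-F(i+j,a)=F(a,i+j)$ on the support of $\psi$ and declares the generator formula immediate.
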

\proof The form on the generators follows immediately, and is then extended as derivations on products of the $x_a$.  Note that $-F(i+j,a)=F(a,i+j)$ given that only $a\ne 0,i+j$ contribute in view of Proposition~6.10. We have to show that these vector fields  vanish on $\sum_a x_ax_a$ so that they actually define vector fields on $S^{2^n-1}$. Indeed, for $i\ne j$,
\[ D_{ij}(\sum_b x_b^2)=2F(i,j)\sum_a F(a,i+j)\psi(i,j,a)x_{i+j+a}x_a.\]
Under a change of variables $a\to i+j+a$ we have $\psi(i,j,i+j+a)=\psi(i,j,a)$ from the values of $\psi$ in Proposition~6.10 (each case is invariant), the quadratic in $x$ is unchanged but $F(a+i+j,i+j)=-F(a,i+j)$ from the identities in Section~2. Hence the sum changes sign, hence vanishes. Finally, looking at the vector fields $D_{ij}$ on $S^{2^n-1}$ we see that the terms from $3\phi(i,j,a)-1$ in $\psi(i,j,a)$ give $-(3/2)[\del^i,\del^j]$ and $(1/2)[[\del^i,\del^j]]$ respectively, from Lemma~6.8. The $-2\Rcal(i+j,a)$ term in $\psi(i,j,a)$ by contrast converts $F(a,i+j)$ into $F(i+j,a)$ and  hence similarly gives $[[\del_R^i,\del_R^j]]$. \endproof

It is also possible to unpack the formula for $D_{ij}$ in Lemma~6.11 more explicitly in terms of the usual infinitesimal action of the rotation group in 8 dimensions (or `orbital angular momentum'):

\begin{corollary} We can write $D_{ij}$ explicitly as $D_{ii}=0$ and
\[D_{ij} =4(x_i {\del\over\del x_j}-x_j{\del\over\del x_i})-2F(i,j)\sum_{k\ne i,j, i+j}F(k,i+j)x_{i+j+k}{\del\over\del x_k},\quad \forall i\ne j\]
\end{corollary}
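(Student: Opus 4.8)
The plan is to substitute the explicit values of $\psi$ from the preceding proposition into the closed formula for $D_{ij}$ provided by Lemma~\ref{Dij} and then to collect terms according to the four cases for $\psi(i,j,a)$. Since $\psi(i,j,a)=0$ whenever $i=j$, the claim $D_{ii}=0$ is immediate from that formula, so I fix $i\ne j$ (both nonzero) for the rest.

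Starting from
\[ D_{ij}=F(i,j)\sum_{a} F(a,i+j)\psi(i,j,a)x_{i+j+a}{\del\over\del x_a},\]
I would split the sum over $a\in\Z_2^n$ into three groups. First, for $a=0$ and $a=i+j$ one has $\psi=0$, so these contribute nothing. Second come the cases $a=i$ and $a=j$, where $\psi=4$. Third is the remaining family of $a$ for which $i,j,a$ are linearly independent, where $\psi=-2$; this family is precisely the set of nonzero $k\ne i,j,i+j$, and those terms assemble directly into $-2F(i,j)\sum_{k\ne i,j,i+j}F(k,i+j)x_{i+j+k}{\del\over\del x_k}$, the second summand in the statement. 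Here the convention that the index $k$ is imaginary silently enforces $k\ne0$, which is exactly why $a=0$ was already discarded.

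It then remains to evaluate the two terms $a=i$ and $a=j$. For $a=i$ we have $x_{i+j+i}=x_j$, and using $F(i,i+j)=-F(i,j)$ from (\ref{canr}) together with $F(i,j)^2=1$ from (\ref{square}), the $a=i$ term collapses to $-4x_j{\del\over\del x_i}$. For $a=j$ we have $x_{i+j+j}=x_i$, and using $F(j,i+j)=-F(j,i)$ from (\ref{canr}) followed by the antisymmetry $F(j,i)=-F(i,j)$ (which is just $\Rcal(i,j)=-1$ from Lemma~\ref{phi}), we get $F(j,i+j)=F(i,j)$, so this term collapses to $4x_i{\del\over\del x_j}$. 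Adding the two yields $4\bigl(x_i{\del\over\del x_j}-x_j{\del\over\del x_i}\bigr)$, the first summand, completing the identification.

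The computation is essentially bookkeeping, and no genuine obstacle arises; the only place demanding care is the sign tracking in the $a=i$ and $a=j$ terms, where one must apply (\ref{canr}), (\ref{square}) and the antisymmetry $F(j,i)=-F(i,j)$ in the correct order, and must confirm that relabelling $a\mapsto k$ in the linearly-independent case reproduces exactly the stated range $k\ne i,j,i+j$ with $k\ne0$ forced by the convention.
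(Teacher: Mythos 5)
Your proof is correct and takes essentially the same approach as the paper: the paper likewise starts from the formula of Lemma~\ref{Dij}, drops the $a=0$ and $a=i+j$ terms where $\psi$ vanishes, and splits off the $a=i,j$ cases (coefficient $4$) from the linearly independent ones (coefficient $-2$), its only cosmetic difference being that it first rewrites $\psi(i,j,k)=3\phi(i,j,k)+1$ on the surviving range before evaluating the cases. Your explicit sign tracking via (\ref{canr}), (\ref{square}) and the antisymmetry $F(j,i)=-F(i,j)$ fills in the step the paper leaves implicit, and it is accurate.
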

\proof Note that $\psi(i,j,a)$ vanishes when $a=0$ or $a=i+j$, and when we exclude these then $\Rcal(a,i+j)=-1$ if $i\ne j$. Hence we can also write when $i\ne j$ that
\[ D_{ij}=F(i,j)\sum_{k\ne i+j}F(k,i+j)\left(3\phi(i,j,k)+1\right)x_{i+j+k}{\del\over\del x_k}\] 
We then obtain the stated result  on further splitting off the $k=i,j$ cases (in the sum $i,j,k$ are then linearly independent over $\Z_2$). In passing, we note that the sum is most of $\del^{i+j}$  and hence we can also write 
\[ D_{ij}-[[\del^i,\del^j]]=6(x_i {\del\over\del x_j}-x_j{\del\over\del x_i})+2F(i,j)(x_{i+j}{\del\over\del x_0}-x_0{\del\over\del x_{i+j}})\]
provided $i\ne j$. 
 \endproof
 
\begin{proposition} The vector fields $D_{ij}$ on $S^{2^n-1}$ obey
\begin{enumerate}
\item $D_{ij}=-D_{ji}$ for all $i,j$.
\item $F(i,j)D_{i+j,k}+F(j,k)D_{j+k,i}+F(k,i)D_{k+i,j}=0$ for all distinct $ i,j,k$
\item When $n=3$, $ \sum_{i+j=k}F(i,j)D_{ij}=0$ for all $k$.
\end{enumerate}
\end{proposition}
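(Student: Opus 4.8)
The plan is to feed the explicit formula for $D_{ij}$ from Lemma~\ref{Dij}, namely $D_{ij}=F(i,j)\sum_a F(a,i+j)\psi(i,j,a)\,x_{i+j+a}\,\del/\del x_a$ with $\psi(i,j,a)=3\phi(i,j,a)-1-2\Rcal(i+j,a)$, into each identity and compare coefficients of the separate operators $x_{?}\,\del/\del x_a$. Part (1) is then immediate: $D_{x,y}$ is manifestly bilinear and antisymmetric in $x,y$, since in $D_{x,y}=[[x,y],\,\cdot\,]-3(x,y,\,\cdot\,)$ both the commutator bracket and the associator of the alternative algebra $k_FG$ are alternating; equivalently, the explicit formula gives $D_{ji}$ with $F(j,i)=-F(i,j)$ but $\psi(j,i,a)=\psi(i,j,a)$ (the tabulated values of $\psi$ are symmetric in the first two slots), so the two formulae differ by a global sign and both vanish when $i=j$.

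For part (2) the crucial structural remark is that across the three summands the total shift is the common value $s:=i+j+k$, so that all three fields are built from the same operators $x_{s+a}\,\del/\del x_a$ and the factor $\Rcal(s,a)$ occurring in $\psi(\,\cdot\,,\,\cdot\,,a)=3\phi(\,\cdot\,,\,\cdot\,,a)-1-2\Rcal(s,a)$ is shared by all three. Writing $T_1=F(i,j)F(i+j,k)$, $T_2=F(j,k)F(j+k,i)$, $T_3=F(k,i)F(k+i,j)$, comparison of coefficients reduces the claim to $\sum_{\rm cyc}T\,\psi(i+j,k,a)=0$ for every $a$. If $i,j,k$ are dependent then, being distinct and nonzero, $i+j=k,\ j+k=i,\ k+i=j$, so each of $D_{i+j,k},D_{j+k,i},D_{k+i,j}$ has coinciding indices and vanishes by part (1); the identity is trivial. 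If $i,j,k$ are independent—which under the composition hypothesis forces $n=3$, so that $\{i,j,k\}$ spans $\Z_2^3$—I will first prove the sublemma $T_1=T_2=T_3$ using $\phi=\del F$ together with $\phi=-1$ on independent triples and $\Rcal=-1$ on independent pairs (for instance $T_1=\phi(i,j,k)F(j,k)F(i,j+k)=-F(j,k)F(i,j+k)$ and $F(j+k,i)=-F(i,j+k)$ give $T_2=T_1$, and similarly $T_3=T_1$). The problem then collapses to $\psi(i+j,k,a)+\psi(j+k,i,a)+\psi(k+i,j,a)=0$, which follows from a short count of the shared term $-3-6\Rcal(s,a)$ against $3\sum_{\rm cyc}\phi$: for $a\in\{0,s\}$ all three $\phi$ equal $1$ and $\Rcal(s,a)=1$, giving $9-3-6=0$, while for each of the six values $a\in\{i,j,k,i+j,j+k,k+i\}$ exactly one $\phi$ equals $1$ and $\Rcal(s,a)=-1$, giving $3(-1)-3+6=0$.

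Part (3) uses the same mechanism and needs no analogue of the sublemma. Here the common shift is $k=i+j$, and since $F(i,j)^2=1$ one has $F(i,j)D_{ij}=\sum_a F(a,k)\psi(i,j,a)\,x_{k+a}\,\del/\del x_a$, with $\Rcal(k,a)$ again common to all summands. Comparing coefficients reduces the claim to $\sum_{i+j=k}\psi(i,j,a)=0$ for each $a$; writing $\psi=3\phi-1-2\Rcal(k,a)$ and noting that the three unordered pairs $\{i,i+k\}$ partition the six nonzero vectors distinct from $k$, I count: for $a\in\{0,k\}$ every pair gives $\phi=1$ and $\Rcal(k,a)=1$, whereas for the other values exactly one pair gives $\phi=1$ and the other two give $\phi=-1$ with $\Rcal(k,a)=-1$, and in each case the weighted sum cancels. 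The degenerate $k=0$ forces $i=j$, so each $D_{ij}=0$.

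The main obstacle is the sublemma $T_1=T_2=T_3$ of part (2), i.e. the cyclic invariance $(e_ie_j)e_k=(e_je_k)e_i=(e_ke_i)e_j$ of the top coefficient for independent imaginary units, together with the essential use of the composition-algebra hypothesis to restrict to $n\le3$: were a larger $n$ allowed one could pick $a$ outside $\mathrm{span}(i,j,k)$, all three associators $\phi$ would simultaneously equal $-1$, and $\sum_{\rm cyc}T\,\psi=-6T_1\ne0$ would violate the identity—so it is exactly the fact that independent $i,j,k$ already exhaust $\Z_2^3$ that makes the cyclic sum vanish. Once the sublemma and this remark are in place, the rest is the routine $\phi$- and $\Rcal$-bookkeeping indicated above.
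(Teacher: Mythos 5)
Your proposal is correct and follows essentially the same route as the paper: part (1) from the antisymmetry of $F$ together with $\psi(i,j,a)=0$ when $i=j$; part (2) by disposing of the dependent case via part (1) and, in the independent case, proving the cyclic invariance $T_1=T_2=T_3$ from $\phi=\del F$ and $\Rcal$ exactly as the paper does, then case-checking $a$ over ${\rm span}(i,j,k)=\Z_2^3$; and part (3) by counting the values of $\psi$ over the pairs summing to $k$. The only cosmetic differences are that the paper runs the part-(3) count for general $n$, obtaining the coefficient $16-2^{n+1}$ that vanishes precisely when $n=3$, and uses the equivalent $3\phi+1$ form of $\psi$ in part (2), while you work with $\psi$ throughout and specialise to $n=3$ from the outset.
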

\proof Part (1) is immediate as $F(i,j)=-F(j,i)$ when $i\ne j$ and $\psi(i,j,a)=0$ when $i=j$. 

We note that  part (2) is actually true for all $i,j$ provided we replace $F(i,j)$ by the actual structure constants of $[[\ ,\ ]]$ as given by $F(i,j)-F(j,i)$. Then if $i=j\ne k$ (say), the first term is zero and the 2nd and third terms cancel. When all three are distinct as stated, we split the proof into two cases. When they are linearly dependent, so $i+j+k=0$, all terms are zero as $D_{ii}=D_{jj}=D_{kk}=0$ by part (1) so in this case part (2) is empty. When they are linearly independent then $F(i,j)F(i+j,k)=-F(i,j+k)F(j,k)=F(j,k)F(j+k,i)$ since $\phi(i,j,k)=-1$ and $i\ne j+k$. Hence this expression is cyclically invariant under the change $i\to j\to k\to i$. Thus, obtaining $D_{i+j,k}$ etc. from the formula stated at the start of the proof of Corollary~6.12, we have 
\begin{eqnarray*} F(i,j)F(i+j,k)\sum_{a\ne 0,i+j+k}&& \left(3\phi(i+j,k,a)+3\phi(j+k,i,a)+3\phi(k+i,j,a)+3\right)\\
&&\quad F(a,i+j+k)x_{i+j+k+a}{\del\over\del x_a}\end{eqnarray*}
for the expression in part (1). The easiest way to see that the expression in brackets is zero is to note that for $n\le 3$ the vector $a$   cannot be linearly independent of  $i,j,k$. Hence we must have one of  $a=i,j,k,i+j,j+k,k+i$. Then, for example, if $a=k$ we have $3+\phi(j+k,i,k)+3\phi(k+i,j,k)+3=3+3\phi(j,i,k)+3\phi(i,j,k)+3=0$. 

For part (3) we change the order of summation and write $j=i+k$, so the expression of interest is
\[ \sum_{a\ne 0,k}\left(\sum_{i\ne k}\psi(i,j,a)\right)F(a,k)x_{k+a}{\del\over\del x_a}=(16-2^{n+1})\sum_{a\ne 0,k}F(a,k)x_{k+a}{\del\over\del x_a}\]
because in the sum over $i$ there are $2^n-2$ values of $i$ (as we exclude 0,$k$) and of these  $i=a,a+k$ each give $\psi(i,k+i,a)=4$ according to the values in Proposition~6.10 and the rest have $i,k+i,a$ linearly independent and hence each give $-2$. Hence the sum is $8-2(2^n-4)=16-2^{n+1}$. This vanishes  when $n=3$ (otherwise it is proportional to $x_k{\del\over\del x_0}-x_0{\del\over\del x_k}-\del^k$ in view of Lemma~6.8). \endproof

For $n=3$ the 3rd set of relations include the 2nd set, hence in this case there are at most  (7.6/2)-7=14 independent vector fields and (at least over $\R$) there are exactly this many as the derivations we started with are known to span all the derivations of the octonions and hence the 14-dimensional Lie algebra $g_2$. We conclude that the commutators of these $D_{ij}$ among themselves must give the structure constants of $g_2$ in terms of the data $F,\phi,\Rcal$ on $\Z_2^3$. For $n=2$ the 2nd set of relations is empty and the 3rd does not apply; indeed $D_{ij}=4(x_i{\del\over\del x_j}-x_j{\del\over\del x_i})$ from Corollary~6.12 and generate the Lie algebra $so(3)$ of rotations of the 3-sphere in this case.

The combinations in Lemma~6.11 of left and right invariant vector fields occur in the theory of analytic Moufang loops and in that context they are called `Yamagutian' vector fields \cite{Pal}  (more precisely,  $D_{ij}=-3Y(\del^i,\del^i)$ in terms of the notation there). Parts (1)(2) of Proposition~6.13 are our algebraic version of known properties of this, while part (3) appears to be new. We can similarly compute the `Yamaguti bracket'\cite{Yam}
\[ [x,y,z]:=[[x,[[y,z]]\, ]]+[[y,[[z,x]]\, ]]-[[z,[[x,y]]\, ]]\]
 in terms of which the commutator of two Yamagutians takes the form
\[ 6[Y(x,y),Y(z,w)]=Y([x,y,z],w)+Y(z,[x,y,w])\]
as explained in \cite{Pal}.  In our algebraic setting we obtain:
 
 \begin{theorem}  Our vector fields on $S^{2^n-1}$ obey
 \[ [D_{ij},\del^k]=-{1\over 2}[\del^i,\del^j,\del^k]=F(i,j)F(k,i+j)\psi(i,j,k)\del^{i+j+k}\] 
\[ [D_{ij},D_{kl}]=F(i,j)F(k,i+j)\psi(i,j,k)D_{i+j+k,l}-F(i,j)F(l,i+j)\psi(i,j,l)D_{i+j+l,k}\]
We obtain the structure constants of a Lie algebra spanned by the $\{D_{ij}\}$ in terms of the structure constants of $k_FG$.
\end{theorem}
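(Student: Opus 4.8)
The plan is to reduce the second displayed formula to the first, to recognise the first as the geometric realisation of the Yamaguti triple bracket, and then to read off the Lie algebra structure as an immediate corollary. The whole argument rests on establishing the first line, after which the second follows abstractly from the Yamagutian commutator identity recalled above.

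For the first line I would compute the two outer members separately and match them. The value $-\tfrac12[\del^i,\del^j,\del^k]$ is evaluated purely from the Mal'tsev bracket values $[[\del^a,\del^b]]=2F(a,b)\del^{a+b}$ (and $0$ when $a=b$): expanding the definition of the Yamaguti bracket gives
\[ [\del^i,\del^j,\del^k]=4\bigl(F(j,k)F(i,j+k)+F(k,i)F(j,k+i)-F(i,j)F(k,i+j)\bigr)\del^{i+j+k}. \]
In the generic case $i,j,k$ linearly independent the three products of $F$ coincide, by the cyclic identity $F(i,j)F(k,i+j)=F(j,k)F(i,j+k)=F(k,i)F(j,k+i)$, which follows from $\phi(i,j,k)=-1$ and the Section~2 identities exactly as in the proof of Proposition~6.13; the bracket then collapses to $-2F(i,j)F(k,i+j)\del^{i+j+k}$, matching $\psi(i,j,k)=-2$. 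The non-generic cases are checked directly against the same values: when $i=j$ or $k=i+j$ both sides vanish ($\psi=0$), while when $k\in\{i,j\}$ the bracket reproduces $\psi=4$ by the same elementary manipulation using only $F(a,b)=-F(b,a)$. Thus in every case $-\tfrac12[\del^i,\del^j,\del^k]=F(i,j)F(k,i+j)\psi(i,j,k)\del^{i+j+k}$. The remaining equality $[D_{ij},\del^k]=-\tfrac12[\del^i,\del^j,\del^k]$ I would obtain by a direct commutator of the first-order operators of Lemmas~\ref{leftvec} and~\ref{Dij}; collecting the coefficient of each $x_{i+j+k+c}\,\del/\del x_c$ reduces the claim to a pointwise identity in $F$ and $\psi$ that is checked by case analysis on the linear dependence of $c$ with $i,j,k$ (or with Mathematica), giving again the same multiple of $\del^{i+j+k}$.

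For the commutator $[D_{ij},D_{kl}]$ I would then invoke the Yamagutian identity $6[Y(x,y),Y(z,w)]=Y([x,y,z],w)+Y(z,[x,y,w])$ together with the identification $D_{ab}=-3Y(\del^a,\del^b)$ and the bilinearity and antisymmetry of $Y$. Because the first line shows $[\del^i,\del^j,\del^k]=-2F(i,j)F(k,i+j)\psi(i,j,k)\del^{i+j+k}$ is always a scalar multiple of a single left-invariant field, substituting into the identity and converting each $Y(\del^m,\del^l)$ back into $-\tfrac13 D_{ml}$ turns the right-hand side into a combination of two $D$'s; the antisymmetry $D_{ml}=-D_{lm}$ of Proposition~6.13(1) then yields exactly
\[ [D_{ij},D_{kl}]=F(i,j)F(k,i+j)\psi(i,j,k)D_{i+j+k,l}-F(i,j)F(l,i+j)\psi(i,j,l)D_{i+j+l,k}. \]
The degenerate outputs ($i+j+k=0$ or $=l$, and likewise for $l$) cause no difficulty, since in each the relevant $\psi$ or $D$ vanishes on both sides. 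The final assertion is then immediate: the right-hand side lies in the span of the $\{D_{ij}\}$ with coefficients built only from $F$ through $\psi$, so this span is closed under the vector-field commutator, and as commutators of derivations of $k_FG$ these brackets automatically obey the Jacobi identity; hence $\{D_{ij}\}$ spans a Lie algebra with structure constants expressed through $F,\phi,\Rcal$, which for $n=3$ is the $14$-dimensional $g_2$ by the count following Proposition~6.13.

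I expect the main obstacle to be the first line: matching the Mal'tsev-bracket form $-\tfrac12[\del^i,\del^j,\del^k]$ to the explicit $\psi$-form needs the nontrivial cyclic $F$-identity in the independent case and careful bookkeeping of the degenerate ones, and the direct operator computation of $[D_{ij},\del^k]$ is the one genuinely laborious step. A secondary point requiring care is that the abstract Lie bracket in the Yamagutian identity must be the commutator of our vector fields; this is legitimate because the $D_{ij}$ are realised as derivations of $k_FG$ and the identification with Yamagutians is the content of the remark preceding the theorem, but for a fully self-contained treatment the second line can instead be produced by the same direct operator computation as the first, at the cost of a considerably longer calculation.
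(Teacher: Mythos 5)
Your handling of the first displayed formula is essentially the paper's own proof: the same case analysis for $-\tfrac12[\del^i,\del^j,\del^k]$ (degenerate cases matched against the values of $\psi$ in Proposition~6.10, and the cyclic identity $F(i,j)F(k,i+j)=F(j,k)F(i,j+k)=F(k,i)F(j,k+i)$, from $\phi(i,j,k)=-1$, in the linearly independent case), followed by a direct commutator of the explicit operators of Lemmas~6.8 and~6.11 for $[D_{ij},\del^k]$. The only difference is how the operator computation is closed: you finish by pointwise case analysis (or Mathematica), whereas the paper identifies the resulting combination $\phi(a,i+j,k)\bigl(\psi(i,j,a+k)-\Rcal(k,i+j)\psi(i,j,a)\bigr)$ with $\psi(i,j,k)$ in one stroke via the derivation identity $\psi(i,j,a+b)=\phi(i+j,a,b)\psi(i,j,a)+\Rcal(i+j,a)\psi(i,j,b)$ of Proposition~6.10. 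Both are valid; the paper's device is cleaner and explains why the answer is again proportional to a single $\del^{i+j+k}$.

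For $[D_{ij},D_{kl}]$ you genuinely diverge, and here is the one point needing care. You promote the Yamagutian identity $6[Y(x,y),Y(z,w)]=Y([x,y,z],w)+Y(z,[x,y,w])$ of \cite{Pal}, together with $D_{ab}=-3Y(\del^a,\del^b)$, to the status of a proof; your bookkeeping of the constants and of the degenerate cases ($\psi(i,j,i+j)=0$, $D_{ll}=0$) is correct, so the formula does follow \emph{formally}. But in the paper that identity is only motivation ("Our formula for the Yamaguti bracket then leads to the corresponding result\dots"); the actual proof is a direct commutator of the explicit vector fields, reduced to an identity in $F,\psi,\phi,\Rcal$ verified case-by-case with Mathematica. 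The reason is that Pal's identity is a theorem about \emph{analytic} Moufang loops over $\R$, while the theorem here is an algebraic statement over any field of characteristic $\ne 2,3$, and the identification $D_{ij}=-3Y(\del^i,\del^j)$ is made in the paper only as a remark translating notation, not as a proven equivalence of frameworks. As your primary route stands, it therefore imports essentially the statement to be proven in this setting. You flag this yourself and offer the direct operator computation as a fallback --- which is exactly the paper's proof --- so your proposal is sound once you commit to that fallback, or once you add a transfer argument (the claimed commutator identity has integer coefficients, so verifying it over $\R$, after a careful matching of the algebraic vector fields with the analytic ones on $S^7$, would imply it over general $k$). The abstract route buys conceptual clarity about where the right-hand side comes from; the paper's computation buys self-contained algebraic rigor. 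Your concluding step (closure under commutator plus the Jacobi identity for commutators of derivations gives a Lie algebra spanned by the $D_{ij}$, which is $g_2$ for $n=3$ by the dimension count after Proposition~6.13) agrees with the paper.
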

\proof We first prove the formula for $[\del^i,\del^j,\del^k]$. If $i=j=k$ then this is zero since in each term there is a zero first application of $[[\ ,\ ]]$ and the right hand side is also zero as $\psi=0$ in this case. Similarly if $k=j\ne i$. If $k=i+j$ then each term of the Yamagutian vanishes due to a zero second application of $[[\ ,\ ]]$ and indeed $\psi=0$ in this case also. If $i=j\ne k$ then the last term is zero and the first two terms cancel so the Yamagutian is again zero. The right hand side has $\psi(i,i,k)=0$ from Proposition~6.10. If $i=k\ne j$ the 2nd term in the Yamagutian is zero and the first and last coincide, so $-{1\over 2}[\del^i,\del^j,\del^i]=[[\del^i,[[\del^i,\del^j]]\, ]]=4F(i,j)F(i,i+j)\del^j$. This agrees with $\psi(i,j,i)=4$ from Proposition~6.10.  Finally, if $i,j,k$ are linearly independent then 
\[ [\del^i,\del^j,\del^k]=(4F(j,k)F(i,j+k)+4F(k,i)F(j,k+i)-4F(i,j)F(k,i+j))\del^{i+j+k}\]
and the first two terms in the bracketted expression coincide with each other and with $4F(i,j)F(k,i+j)$ (as in the proof of Proposition~6.12) hence $-{1\over 2}[\del^i,\del^j,\del^k]=-2 F(i,j)F(k,i+j)\del^{i+j+k}$ as required since $\psi(i,j,k)=-2$ in this case. 

Next, we compute
\begin{eqnarray*}[D_{ij},\del^k]&=&-F(i,j)\sum_a F(a,i+j)\psi(i,j,a)F(b,k)[x_{i+j+a}{\del\over\del x_a},x_{k+b}{\del\over\del x_b}]\\
&=&-F(i,j)\sum_b F(k+b,i+j)\psi(i,j,k+b) F(b,k)x_{i+j+k+b}{\del\over\del x_b}\\
&&+F(i,j)\sum_a F(a,i+j)\psi(i,j,a)F(i+j+a,k)x_{i+j+k+a}{\del\over\del x_a}\\
&=&-F(i,j)\sum_a\left( F(a+k,i+j)F(a,k)\psi(i,j,a+k)-F(a,i+j)F(a+i+j,k)\psi(i,j,a)\right)\\
&&\quad\quad\quad\quad\quad\quad\quad\quad\quad\quad\quad\quad\quad\quad\quad x_{i+j+k+a}{\del\over\del x_a}\\
&=& - F(i,j)F(k,i+j)\sum_a \phi(a,i+j,k) \left(\psi(i,j,a+k)-\Rcal(k,i+j)\psi(i,j,a)\right) \\
&&\quad\quad\quad\quad\quad\quad\quad\quad\quad\quad\quad\quad  F(a,i+j+k)x_{i+j+k+a}{\del\over\del x_a}
\end{eqnarray*}
where we combined the sums in the 3rd equality by change of variables from $b$ to $a$. We then used the definitions of $\phi(a,k,i+j)=\phi(a,i+j,k)$ to obtain $F(a,i+j+k)F(k,i+j)$ for each quadratic of $F$ in the sum. We then identify the combination of $\phi,\psi,\Rcal$ in the sum as $\psi(i,j,k)$ by the derivation property of $\psi$ in Propostion~6.10, giving $F(i,j)F(k,i+j)\psi(i,j,k)\del^{i+j+k}$ as required.

Our formula for the Yamaguti bracket then leads to the corresponding result for $[D_{ij},D_{kl}]$ as stated. To verify this in our algebraic framework we compute the commutators 
\begin{eqnarray*}&&\kern-20pt [D_{ij},D_{kl}]=F(i,j)F(k,l)\sum_{a,b}F(a,i+j)F(b,k+l)\psi(i,j,a)\psi(k,l,b)[x_{i+j+a}{\del\over\del x_a},x_{k+l+b}{\del \over\del x_b}]\\
&=&F(i,j)F(k,l)\sum_a (F(k+l+a,i+j)F(a,k+l)\psi(i,j,k+l+a)\psi(k,l,a)\\
&&\quad\quad\quad\quad\quad-F(a,i+j)F(i+j+a,k+l)\psi(i,j,a)\psi(k,l,i+j+a))x_{i+j+k+l+a}{\del\over\del x_a}\end{eqnarray*}
where one term comes from setting $a=k+l+b$ in the sum, and we then change variable from $b$ to $a$. The other term comes from setting $b=i+j+a$. Comparing with the desired
result for the commutator,   comparing, we require
\begin{eqnarray*}&&\kern-20pt  F(k,l)F(k+l+a,i+j)F(a,k+l)\psi(i,j,k+l+a)\psi(k,l,a)\\
&&\quad -F(k,l)F(i+j+a,k+l)F(a,i+j)\psi(k,l,i+j+a)\psi(i,j,a)\\
&=& F(k,i+j) F(i+j+k,l)\psi(i,j,k) F(a,i+j+k+l)\psi(i+j+k,l,a)\\
&&\quad -F(l,i+j) F(i+j+l,k)\psi(i,j,l) F(a,i+j+k+l)\psi(i+j+l,k,a).
\end{eqnarray*}
We divide through by $F(a,i+j+k+l)F(i+j,k+l)F(k,l)$ to obtain equivalently
\begin{eqnarray*}&&\phi(a,i+j,k+l)\left(\Rcal(i+j,k+l)\psi(i,j,k+l+a)\psi(k,l,a)-\psi(k,l,i+j+a)\psi(i,j,a)\right)\\
&=&\phi(i+j,k,l)\left(\Rcal(k,i+j)\psi(i.j.k)\psi(i+j+k,l,a)-\Rcal(k,l)\Rcal(l,i+j)\psi(i,j,l)\psi(i+j+l,k,a)\right)\end{eqnarray*}
which we are now able to prove numerically (using Mathematica) for all $i,j,k,l,a$ using the values in Lemma~2.1 and Proposition~6.10. As the commutators close, we obtain a Lie algebra realised by these vector fields.
\endproof

For $n=3$ we deduce that these are the structure constants of $g_2$ a basis formed out of 7 of the $D_{ij}$. Meanwhile, for $n=2$ and we obtain
\[ [D_{ij},D_{kl}]=4(\delta_{k,i}D_{jl}-\delta_{k,j}D_{il}-\delta_{l,i}D_{jk}+\delta_{l,j}D_{ik})\]
since if $i\ne j$ then either $k=i+j$ which gives $\psi(i,j,k)=0$, or $k=i,j$ which gives $\psi(i,j,k)=4$. Similarly for $\psi(i,j,l)$, while if $i=j$ both sides are zero. This is the Lie algebra $so(3)$ as expected in this case.

\section{Concluding remarks}

Since a Hopf algebra is a trivial (in the sense of coassociative) example of a Hopf coquasigroup, quantum groups such as $\C_q[SU_2]$ could be viewed as such. Just as we have seen above that the Lie algebra $su_2$ is naturally obtained in terms of the $F$ structure constants on $\Z_2^2$, we have seen how $k[S^3]$ can likewise be obtained as (in this case) a Hopf algebra. This Hopf algebra with coproduct from Proposition~5.7 looks, however, very different from the usual matrix coproduct used to describe the algebraic group $\C[SU_2]$ and its $q$-deformation. We also have to describe the $*$-involution structure in the case over $\C$ that picks out the compact real form $SU_2=S^3$. 

We do this as follows. Referring to $k[S^{2^n-1}]$ we describe elements of $\Z_2^n=\Z_2^{n-1}\times\Z_2$ labelling the generators in the form $a0$ or $a1$ where $a\in \Z_2^{n-1}$. We also recall that the $F$ for the octonions is built up by the Cayley-Dickson process from $F$ on $\Z_2^{n-1}$. This is explained in \cite{Ma99}
and amounts to a formula which we now write as
\[ F(a0,b0)=F(a,b),\quad F(a0,b1)=F(a,a)F(a,b)\] \[  F(a1,b0)=\Rcal(a,b)F(a,b),\quad
 F(a1,b1)=-F(a,a)\Rcal(a,b)F(a,b).\]
We now define complex generators for $\C[S^{2^n-1}]$ by 
\[ z_a=x_{a0}+\imath x_{a1}\]
Then after a short computation the coproduct of $\C[S^{2^n-1}]$ in Proposition~5.7 takes the form 
\[ \Delta z_a=\sum_{b+c=a}F(b,c) z_b^{\Rcal(b,c)}\tens z_c^{F(b,b)},\quad z_a^{\pm 1}=x_{a0}\pm\imath x_{a1}\]
(so $z_a^\eps$ denotes  $z_a$ if $\eps=1$ and $z_a^*$ if $\eps=-1$). Thus $\C[S^{2^n-1}]$ is the commutative polynomial algebra in complex generators $z_a,z_a^*$ with relations 
\[ \sum z_a z_a^*=1,\]
the above coproduct and a $*$-involution sending $z_a$ to $z_a^*$. For $S^3$ it means two complex generators $z_0$ and $z_1$ with $F(1,1)=-1$ the cochain for $\C$, i.e. the coproduct
\[ \Delta z_0=z_0\tens z_0-z_1\tens z_1^*,\quad \Delta z_1=z_0\tens z_1+z_1\tens z_0^*\]
and antipode $Sz_0=z_0^*$, $Sz_1=-z_1$. We can think of this as an $SU_2$ matrix of generators $\begin{pmatrix}z_0^*& -z_1^*\\ z_1 & z_0\end{pmatrix}.$

We can now allow noncommutation relations between the generators. The relations
\[ z_0z_1=qz_1z_0,\quad z_0^*z_1=z_1z_0^*\quad z_0^*z_0=z_0z_0^*+(q-q^{-1})z_1z_1^*\]
and the sphere relation $z_0z_0^*+z_1z_1^*=1$ defines the quantum group $\C_q[S^3]$ as a $*$-Hopf algebra for real $q$. The relation with the usual matrix coproduct is now $z_0=d$ and $z_1=q^{-{1\over 2}}c$ in terms of the usual generators. 

We can similarly compute the coproduct of $\C[S^7]$  using the explicit form of $F$ for the quaternions to obtain
\[ \Delta z_i=z_i\tens z_0^*+z_0\tens z_i+\sum_{j,k}\eps_{ijk}z_j^*\tens z_j^*,\quad  \Delta z_0=z_0\tens z_0-\sum_i  z_i\tens z_i^*\]
where now $i,j,k\in\{1,2,3\}$ and $\eps$ is the totally antisymmetric tensor with $\eps_{123}=1$. This gives our explicit form of this Hopf coquasigroup in terms of complex $*$-algebra generators.  There are also several ideas for noncommutative and $q$-deformed $S^7$ algebras, notably \cite{VS}, and some of these may be compatible with the above coproduct. This is a topic for a sequel. 

\subsection*{Acknowledgements} Most of these results were presented at the 2nd Mile High conference, Denver, June 2009. We are, however, grateful to M. Bremner  for pointing out the reference \cite{PS} (and for an interesting conference presentation) leading us to add Propositions~4.8 and~4.9.

\end{document}